\begin{document}

\newtheorem{theorem}{\bf Theorem}[section]
\newtheorem{proposition}[theorem]{\bf Proposition}
\newtheorem{definition}[theorem]{\bf Definition}
\newtheorem{corollary}[theorem]{\bf Corollary}
\newtheorem{exam}[theorem]{\bf Example}
\newtheorem{remark}[theorem]{\bf Remark}
\newtheorem{lemma}[theorem]{\bf Lemma}
\newcommand{\nrm}[1]{|\!|\!| {#1} |\!|\!|}

\newcommand{\ba}{\begin{array}}
\newcommand{\ea}{\end{array}}
\newcommand{\von}{\vskip 1ex}
\newcommand{\vone}{\vskip 2ex}
\newcommand{\vtwo}{\vskip 4ex}
\newcommand{\dm}[1]{ {\displaystyle{#1} } }

\newcommand{\be}{\begin{equation}}
\newcommand{\ee}{\end{equation}}
\newcommand{\beano}{\begin{eqnarray*}}
\newcommand{\eeano}{\end{eqnarray*}}
\newcommand{\inp}[2]{\langle {#1} ,\,{#2} \rangle}
\def\bmatrix#1{\left[ \begin{matrix} #1 \end{matrix} \right]}

%\newcommand {\proof} {\par{\it Proof}. \ignorespaces}
%\newcommand {\eproof}
%      {\sigace
%        {\ \vbox{\hrule\hbox{\vrule height1.3ex\hskip0.8ex\vrule}\hrule}}
%        \par}

%%%%%%%%%%%%%%%%%%%%%%%%%%%%%%%%%%%%%%%%%%%%%%%%%%%%%%%%%%%%%%%%%%%%%%%%%%

\def \R{{\mathbb R}}
\def \C{{\mathbb C}}
\def \M{{\mathcal M}}
\def \S{{\mathcal S}}
\def \pf{{\bf Proof: }}
\def \pds{{\mathrm {PDS}}}
\def \C{{\mathcal C}}
\def \T{{\mathcal T}}

%%%%%%%%%%%%%%%%%%%%%%%%%%%%%%%%%%%%%%%%%%%%%%%%%%%%%%%%%%%%%%%%%%%%%%%%%%%%%%%%

\title{ On the eigenvalue region of permutative doubly stochastic matrices}
\author{ Amrita Mandal,\thanks{Department of Mathematics, Indian Institute of Technology Kharagpur, Kharagpur 721302, India. Email:AMRITAMANDAL@iitkgp.ac.in,  mandalamrita55@gmail.com}  \,\, Bibhas Adhikari,\thanks{Department of Mathematics, Indian Institute of Technology Kharagpur, Kharagpur 721302, India. Email: bibhas@maths.iitkgp.ac.in}  \,\, M. Rajesh Kannan\thanks{Department of Mathematics, Indian Institute of Technology Kharagpur, Kharagpur 721302, India. Email: rajeshkannan@maths.iitkgp.ac.in, rajeshkannan1.m@gmail.com} }
\date{}

\maketitle
\thispagestyle{empty}

{\small \noindent{\bf Abstract.}
%A permutative matrix is a matrix whose rows are permutations of its first row. A permutative doubly stochastic (PDS) matrix is a nonnegative matrix which is both permutative and doubly stochastic (DS). 
This paper is devoted to the study of eigenvalue region of the doubly stochastic matrices which are also permutative, that is, each row of such a matrix is a permutation of any other row. We call these matrices as permutative doubly stochastic (PDS) matrices. A method is proposed to obtain symbolic representation of all PDS matrices of order $n$ by finding equivalence classes of permutationally similar symbolic PDS matrices. This is a hard problem in general as it boils down to finding all Latin squares of order $n.$ However, explicit symbolic representation of matrices in these classes are determined in this paper when $n=2, 3, 4.$  It is shown that eigenvalue regions are same for doubly stochastic matrices and PDS matrices when $n=2, 3.$ It is also established that this is no longer true for $n=4,$ and two line segments are determined which belong to the eigenvalue region of doubly stochastic matrices but not in the eigenvalue region of PDS matrices. Thus a conjecture is developed for the boundary of the eigenvalue region of PDS matrices of order $4.$ Finally, inclusion theorems for eigenvalue region of PDS matrices are proved when $n\geq 2.$   

%First we propose to characterize the set of all (symbolic) $\pds$ matrices of order $n$ by dividing it into equivalence classes, which are called cogredient classes such that any two matrices in a class are permutationally similar. This results into determining the eigenvalue region of PDS matrices as union of eigenvalue regions of all the cogredient classes of such matrices. We determine explicit symbolic representation of matrices in these classes of $\pds$ matrices of order $2, 3,$ and $4.$ 
%\textcolor{red}{and use it to determine the eigenvalue region with the support of numerical computations}. 

%We prove that eigenvalue regions of PDS matrices of order $2$ and order $3$ are the same as that of DS matrices of order $2$ and order $3$, respectively. We show that the eigenvalue region of the set of all $4\times4$ PDS matrices, $\Lambda (\pds_4)$, is a proper subset of the eigenvalue region of the set of all $4 \times 4$ doubly stochastic matrices, $\omega_4$ by constructing two line segments in the complex plane, which are in  $\omega_4$ but not in $\Lambda (\pds_4)$. We identify a subset of the complex plane which is contained in the eigenvalue region of PDS matrices of order $n \geq 2$.  We conclude the article with a conjecture about the boundary of the region $\Lambda (\pds_4)$.

	\vone \noindent{\bf Keywords.} Doubly stochastic matrix, Eigenvalue region,  Latin square,  Permutative matrix
	
	\vone\noindent{\bf AMS subject classification(2000):} 15A18, 15A29, 15B51.
		
	\section{Introduction}\label{sec:1}
	Characterization of the eigenvalue region of doubly stochastic (DS) matrices is one of the long-standing open problems in matrix theory. This problem relates to many other problems, including inverse eigenvalue problem and eigenvalue realizability problem. Despite several attempts, the eigenvalue region of DS matrices of order up to $4$  is known till date. However different conjectures and observations are reported for the eigenvalue region of higher-order DS matrices  \cite{Levick2015, Perfect1965,harlev}. In this paper, we consider permutative doubly stochastic (PDS) matrices  and investigate its eigenvalue region. Recall that a permutative matrix is a matrix whose rows are permutations of its first row. Hence a PDS matrix is a matrix which is both permutative and DS. Eigenvalue properties of permutative matrices is a topic of interest in itself due to its applications in solving eigenvalue realizability problems which include Suleimanova spectrum \cite{Loewy2016,Manzaneda2017,Paparella2016,Paparella2019, Soto2017,Andrade2018}.
	
	Let $\omega_n$ and $\omega_n^0$ denote the eigenvalue region of $\Omega_n$ and $\Omega_n^0$ respectively, where $\Omega_n^0$ denotes the set of all DS matrices of order $n$ with trace zero. In \cite{Perfect1965}, Perfect and Mirsky stated that $$\bigcup_{j=2}^n \Pi_j \subseteq \omega_n~~\mathrm{and}~~\Pi_n^0 \subseteq \omega_n^0,$$ where $\Pi_j$ is the convex hull of the $j^{th}$ roots of unity, and $\Pi_j^0$ is the union of $1$ and convex hull of the $j^{th}$ roots of unity except the point $1$. They also proved that $\omega_2=\Pi_2,$ $\omega_3=\Pi_2 \cup \Pi_3$ and $\omega_3^0=\Pi_3^0,$ and  conjectured that $$\omega_n =\bigcup_{j=2}^n \Pi_j,$$ for all $n.$ Mashreghi and Rivard established that this conjecture is not true by constructing an example of a DS matrix of order $n=5,$ some of whose eigenvalues lie outside the proposed region \cite{Mashreghi2007}. Recently Levick et al. proved that the conjecture is true for $n=4$ \cite{Levick2015}. 
%\textcolor{blue}{and most recently Harlev et all   experimentally show that Perfect Mirsky conjecture is true for $n\leq11$ except %$n=5$ \cite{harlev}}.
Recently Harlev et al., discussed the experimental evidence, for the validity of the Perfect Mirsky conjecture for $n \leq 11$ except $ n = 5$  \cite{harlev}}. In \cite{Benvenuti2018}, Benvenuti proved that $\omega_4^0=\Pi_4^0 \cup [-1,1].$
	
	In this paper, we prove that the eigenvalue regions of PDS matrices of order $2$ and $3$ are the same as that of DS matrices of order $2$ and $3$, respectively. For PDS matrices of order $4$, we use a semi-analytical approach to determine its eigenvalue region. Let $\pds_n$ denote the set of all PDS matrices of order $n>1.$ We propose to exploit the structural pattern of entries of the PDS matrices of order $n$ to divide the set $\pds_n$ into different classes.
%	 Each class contains PDS matrices whose entries follow some specific pattern as follows:  the position of all the entries of the matrix, except the entries of the first row, are determined by permutations.
	  A class, which we call cogredient class (defined in Section 2), consists of PDS matrices which are permutationally similar to each other. Thus, all the matrices in the same class have the same eigenvalue region. Finally, the eigenvalue region of $\pds_n$ can be obtained by taking the union of the eigenvalue regions of these classes. However, for large values of $n$, the number of such classes is extremely large. In this paper, we restrict our attention to determine all cogredient classes of $\pds_n$ explicitly, for $n \leq 4.$ We show that there are $37$ cogredient classes in $\pds_4$. The analytical expression of eigenvalues of matrices in all the classes is hard to find. However we derive explicit expression of eigenvalues of all such classes whose eigenvalue region is a subset of the real line. Explicit expression of non-real eigenvalues of all but six classes are also provided. 
%\textcolor{red}{The expression of the rest of the non-real eigenvalues can be found using computer algebra systems but no %conclusion can be made about the region it occupies inside the unit disc.} 
 Numerical computations are performed in order to plot the eigenvalue regions of all the classes.
	
	We show that the eigenvalue region of $\Omega_4$ is not equal to the eigenvalue region of $\pds_4$. Indeed, we show that any complex number on the line segments $-\frac{1}{2} \pm yi,$ where $\frac{1}{2}<y<\frac{\sqrt3}{2}$ and $r \pm \frac{1-r}{\sqrt 3} i,$ where $-\frac{1}{2} < r < \frac{1-\sqrt3}{1+\sqrt3}$ cannot be an eigenvalue of any PDS matrix of order $4$, whereas we construct  DS matrices of order $4$ having eigenvalues from this collection. We conjecture that non-real  eigenvalues of two parametric $\pds$ matrices should yield the boundary of the eigenvalue region of $\pds_4,$ that lie inside $\Pi_3$ but not in $\Pi_4.$  The explicit expression of those matrices are also given. Finally, we identify a subset in the unit circle of the complex plane which is contained in the eigenvalue region of PDS matrices of order $n \geq 2$.

We believe that the framework developed in this paper can be extended to determine the cogredient classes and the eigenvalue region of $\pds_n$ for $n>4$ but it could boil down to finding all Latin squares of order $n.$ However it can be accomplished for $n\leq 11$ for which the number of all Latin squares is known. This we leave as a future work.\\
	
	% conclude the paper with a promise to determine eigenvalue region of $\pds_n$ for $n>4$ following the proposed framework in near future.

	{\bf Notation:} Unless stated otherwise, a vector is always considered as a row vector. For any vector $c\in \R^n$, we denote $\sum c$ as the sum of all the entries of $c.$ The symmetric group on $n$ elements is denoted as $S_n,$ and the group of permutation matrices of order $n$ is denoted as $\S_n.$ For brevity, we denote a permutation and its corresponding permutation matrix using the same notation interchangeably. If $P \in S_n$ is a product of $r$ disjoint cycles of lengths $k_1, k_2,\ldots, k_r$, then we say $P$ is a permutation of cycle type $(k_1) + (k_2) + \ldots +(k_r).$ By $I_n$ we denote the identity matrix of order $n.$ We denote $J$ as the all-one matrix, the dimension of which will be clear from the context.
Given a set of matrices $S,$ we denote $\Lambda(S)$ as the eigenvalue region of the matrices in $S.$ Besides, for a given matrix $X$ we denote $\Lambda(X)$ as the set of eigenvalues of $X.$

%\textcolor{blue}{A nonnegative matrix $A$ is said to be a doubly stochastic matrix ($\mathrm{DS}$) if both row and column sums are equal to $1$.}

\section{Classification of PDS matrices}

In this section we propose to partition $\pds_n$ into equivalence classes, which we call cogredient classes, of symbolic $\pds$ matrices such that any two matrices in a class are similar thorough a permutation matrix. Thus it turns out that eigenvalue region of $\pds_n$ is union of eigenvalue region of the cogredient classes. We determine explicit symbolic representation of matrices in all the cogredient classes for $\pds_n$ and use it to determine its eigenvalue region for $n=2,3,4.$ Due to the symbolic representation of the matrices in a class, the entries of such a matrix follow a pattern. We elaborate more on this after defining permutative doubly stochastic matrix as follows. For any permutation $\pi \in S_n$, we define the corresponding permutation matrix $P=(P_{ij})\in \S_n$ such that $P_{ij}=1$ if $j=\pi(i),$ and $0$ otherwise.

%\begin{align*}
%P_{ij}=&1~~~\mathrm{if}~~j=\pi(i) \\
    %             =&0~~~\mathrm{otherwise}.
%\end{align*}}

\begin{definition}\label{def:permu}(Permutative matrix)
 A matrix $A$ of order $n\times n$ is said to be a permutative matrix if it is of the form \begin{equation}\label{permutivem} A=\bmatrix{c \\ c P_1 \\ c P_2 \\ \vdots \\ c P_{n-1} },\end{equation} where $P_j$ is a permutation matrix of order $n,$ $j=1, \hdots, n-1,$ and $c$ is a row vector of dimension $n.$\end{definition}

In \cite{Paparella2016}, permutative matrix is introduced in an alternative way by considering $c$ as a column vector. However we adapt the Definition \ref{def:permu} in this paper.
%\textcolor{red}{since it is easy to deal with this representation in order to exploit symbolic representation of such matrices.}

\begin{definition}\label{def:pds}(Permutative doubly stochastic matrix)
A permutative matrix $A$ of the form (\ref{permutivem}) is said to be a permutative doubly stochastic matrix if it is also a doubly stochastic matrix, that is, $c$ is nonnegative with $\sum c=1$ and $\sum Ae_j=1$ where $\{e_j : j=1, \hdots, n\}$ is the standard basis (column vectors) of $\R^n.$
\end{definition}

For an $(n-1)$-tuple $(P_1,P_2,\hdots,P_{n-1})$ of permutation matrices of order $n,$  a class of $\mathrm{PDS}$ matrices can be obtained as $$\M(c;P_1,\hdots,P_{n-1}) = \left\{\bmatrix{c \\ c P_1 \\ c P_2 \\ \vdots \\ c P_{n-1} }\in\Omega_n : c=\bmatrix{c_1 & \hdots & c_n}\right\}.$$  Treating $c$ to be a vector of symbols $c_i$ the matrices in $\M(c;P_1,\hdots,P_{n-1})$ follow a pattern with respect to the position of its entries, which is determined by the permutation matrices $P_j, 1\leq j\leq n-1.$
 As an example, $\pds$ matrices of order $3$ in $\M(c; (132), (123))$   can be represented as
\begin{equation}\label{sec2:exp1}\bmatrix{c_1 & c_2 & c_3 \\ c_2 & c_3 & c_1 \\ c_3 & c_1 & c_2}.\end{equation}

From now onward we call the row vector $c$ as the generic vector which defines the class of matrices $\M(c;P_1,\hdots,P_{n-1})$ for a given tuple $(P_1, \hdots, P_{n-1}),$ and we call the corresponding PDS matrix as the generic matrix which represents $\M(c;P_1,\hdots,P_{n-1}).$  Then the set of all PDS matrices of order $n\times n$ can be written as $$\pds_n = \bigcup_{P_1,P_2,\hdots,P_{n-1}\in\S_n}\M(c; P_1, \hdots, P_{n-1}),$$ where $c$ is a generic row vector of order $n.$

Note that the matrix $$\bmatrix{c_1 & c_3 & c_2 \\ c_3 & c_2 & c_1 \\ c_2 & c_1 & c_3}$$ has the similar pattern of entries in it as that of (\ref{sec2:exp1}). Indeed for a generic vector $c=\bmatrix{c_1 & \hdots & c_n}$ and a tuple $(P_1,P_2,\hdots,P_{n-1})$ of permutation matrices of order $n$, the PDS matrices in $\M(c; P_1, \hdots, P_{n-1})$ and $\M(cP; P_1, \hdots, P_{n-1})$ have similar pattern of entries for any permutation matrix $P$ of order $n.$ Thus, we introduce this class of PDS matrices which follow a similar pattern of its entries as \begin{equation}\label{def:class1}\left[\M(c; P_1, \hdots, P_{n-1})\right]=\{A\in\M(cP; P_1, \hdots, P_{n-1}) : P\in\S_n\}\end{equation} for the generic vector $c$ and the $(n-1)$-tuple $(P_1,P_2,\hdots,P_{n-1}).$ However, it may so happen that for the generic matrix $A$ which represents $\M(c; Q_1, \hdots, Q_{n-1}),$ there may exist a permutation matrix $X$ of order $n$ such that $X^TAX\in \left[\M(c; P_1, \hdots, P_{n-1})\right]$ even if $P_j\neq Q_j$ for some $j\in\{1,\hdots, n-1\}.$ For example, the generic matrix $A$ which represents $\M(c;(1243),(1342),(14)(23))$ given by $$ A= \bmatrix{c_1 & c_2 & c_3 & c_4 \\ c_3 & c_1 & c_4 & c_2 \\ c_2 & c_4 & c_1 & c_3\\ c_4 & c_3 & c_2 & c_1},$$ and the generic matrix $$B=\bmatrix{c_1 & c_4 & c_3 & c_2 \\ c_4 & c_1 & c_2 & c_3\\ c_2 & c_3 & c_1 & c_4 \\ c_3 & c_2 & c_4 & c_1} \in \left[\M(c;(12)(34),(1324),(1423))\right]$$ are permutationally similar via  the permutation matrix $X=(1)(24)(3).$ Thus, the generic matrix $A$ which represents $\left[\M(c;(1243),(1342),(14)(23))\right]$ is permutationally similar to the generic matrix $B$ which represents $\left[\M(c;(12)(34),(1324),(1423))\right],$ and hence eigenvalue region of these two classes of matrices are the same. It is then important to characterize all such classes of matrices that are similar to each other through a permutation matrix. We introduce the following definition.

\begin{definition}(Cogredient classes of PDS matrices)\label{def:cm}
Let $c=\bmatrix{c_1 & \hdots & c_n}$ be a generic vector. Let $(P_1,P_2,\hdots,P_{n-1})$ and $(Q_1,Q_2,\hdots,Q_{n-1})$ be two $(n-1)$-tuples of permutation matrices of order $n.$ Then $\left[\M(c; P_1, \hdots, P_{n-1})\right]$ and $\left[\M(c; Q_1, \hdots, Q_{n-1})\right]$ are said to be cogredient if for the generic matrices $A$ and $B$ representing $\left[\M(c; P_1, \hdots, P_{n-1})\right]$ and $\left[\M(c; Q_1, \hdots, Q_{n-1})\right]$ respectively, there exist a permutation matrix $X$ such that $X^TAX=B.$
\end{definition}
%Two matrices of order $n\times n$ are said to be cogredient if there exists a permutation matrix $P$ of order $n$ such that $B=P^TAP.$ Given two distinct tuples $(P_1,\hdots,P_{n-1})$ and $(Q_1,\hdots,Q_{n-1})$ we say $\M(c,P_1,\hdots,P_{n-1})$ is cogredient to $\M(c,Q_1,\hdots,Q_{n-1})$ if there exists a permutation matrix $X$ such that $$\bmatrix{\widehat{c} \\ \widehat{c} P_1 \\ \widehat{c} P_2 \\ \vdots \\ \widehat{c} P_{n-1} } = X^T \bmatrix{\widetilde{c} \\ \widetilde{c} Q_1 \\ \widetilde{c} Q_2 \\ \vdots \\ \widetilde{c} Q_{n-1} }X$$ for some $\widehat{c}=\bmatrix{\widehat{c}_1 & \hdots & \widehat{c}_n}$ and $\widetilde{c}=\bmatrix{\widetilde{c}_1 & \hdots & \widetilde{c}_n}.$ $X$ is called the matrix which defines the cogredient relation.

Let us denote the cogredient relation by $\rho$ which is defined on the set $\{\left[\M(c; P_1, \hdots, P_{n-1})\right] : (P_1, \hdots, P_{n-1})\in \S_n\times \hdots \times\S_n\}$ for the generic vector $c.$ Then it is immediate that $\rho$ is an equivalence relation. We call an equivalence class defined by $\rho$ as a cogredient class and we denote it by the bold-faced notation ${\bf{\left[\M(c; P_1, \hdots, P_{n-1})\right]}}$ for some $(P_1, \hdots, P_{n-1})\in \S_n\times \hdots \times\S_n.$ It is important to notice that the eigenvalue region of $\pds_n$ must be the union of eigenvalue regions of its cogredient classes of matrices. It is then pertinent to determine all the cogredient classes of $\pds_n$ for any given $n.$ The following observation provides a constructive method to determine all the cogredient classes of $\pds_n.$

Let $A$ be a generic matrix which represents the set of matrices $\M(c; P_1, \hdots, P_{n-1})$ for some permutation matrices $P_j, j=1,\hdots, n-1$. Then for any pair of permutation matrices $P$ and $ Q$ the matrix $B=PAQ\in \left[\M(c; Q_1, \hdots, Q_{n-1})\right]$ for some $Q_j, j=1, \hdots, n-1,$ the set of such matrices is called the isotopic class defined by $A$ when $A$ is a Latin square \cite{Latin}. For example, let $\hat{A}\in\pds_4$ be a generic matrix, such that $1$st and $2$nd column of $\hat{A}$ have all entries equal to $\frac{1}{4}.$ Then $P\hat{A}Q$ for $(P,Q)\in \S_4 \times \S_4,$ are the matrices having two columns with all entries equal to $\frac{1}{4}$ and the rest columns preserve the same pattern of the entries as that of $3$rd and $4$th columns of $\hat{A}.$ Note that such a pair $A,B$ are called permutation equivalent \cite{Minc1988}. However $B=Q^T(QPA)Q=Q^T(XA)Q$ where $X=QP$ is a permutation matrix. Thus $B$ is permutationally similar to the matrix $XA.$ For any $X\in \S_n,$ $XA$ is a generic matrix which either belongs to $\bf{\left[\M(c; P_1, \hdots, P_{n-1})\right]}$ or does not belong to $\bf{\left[\M(c; P_1, \hdots, P_{n-1})\right]}.$ If  $XA\in \bf{\left[\M(c; P_1, \hdots, P_{n-1})\right]},$ then the class $\left[\M(c; Q_1, \hdots, Q_{n-1})\right] \in \bf{\left[\M(c; P_1, \hdots, P_{n-1})\right]},$ otherwise we generate a new cogredient class $\bf{\left[\M(c; Q_1, \hdots, Q_{n-1})\right]}.$

However consideration of possible $(n-1)$-tuples of permutation matrices to determine all generic matrices which do not arise by following the above procedure could be a daunting task. This can be overcome by constructing matrices which are not permutation equivalent to $A$, and then following the same procedure on such a matrix as mentioned above.
This results in identifying a collection of generic matrices in $\pds_n$ having a specific structure and that are not permutation equivalent to each other. Then for all such matrices $A,$ the matrices $XA, X\in\S_n$ determine all the cogredient classes of $\pds_n$. Indeed, in the next subsection we will see that the selection of $A$ in the beginning can be done from especially structured generic matrices based on repetitions of entries of the generic vector $c$ in the columns of such a matrix.

Here we emphasize that the above classification of $\pds_n$ is the same for the set of permutative square matrices of order $n$ since there is no explicit use of doubly stochastic property of a PDS matrix. Although it can be observed from the next section that the number of cogredient classes will be very large if the DS property is dropped. We articulate the above formulation as to characterize $\pds_n$  so that the number of terminologies to be used in the entire paper is reduced. Below we derive all cogredient classes of $\pds_n$ when $n\leq 4.$ When $n\geq 5,$ the above procedure should be implementated by devising an algorithm which is beyond the scope of this paper.

\subsection{PDS matrices of order $2$ and order $3$}\label{sec:2}

For $n=2,$ it is needless to say that any matrix $A\in\pds_2$ if and only if $A$ is doubly stochastic. Thus $$A=\bmatrix{c_1 & c_2 \\ c_2 & c_1}$$ where $c_1 +c_2=1.$

%In this section we determine the eigenvalue region of PDS matrices of order $3.$ We denote the set of PDS matrices of order $3\times 3$ as $\mathrm{PDS}_3=\cup_{P,Q} \M(c,P,Q)$ where $c=\bmatrix{c_1 & c_2 & c_3}$ is a nonnegative row vector of dimension $3$ such that $\sum c=1,$ $P, Q\in \S_3.$ Note that there are two sets of parameters for classifying the matrices in $\mathrm{PDS}_3,$ given by the entries in $c,$ and the permutations in $S_3$ which define $P, Q.$ For a fixed pair of permutation matrices $P,Q$ the corresponding class of PDS matrices can be obtained by varying $c$. On the other hand, keeping $c$ as a fixed vector and varying $P,Q$ a class of $\mathrm{PDS}$ matrices can be found.

Let $n=3.$ Then, it is easy to check that the sets $\left[\M(c; (132), (123))\right]$ and $\left[\M(c;(123),(132))\right]$ comprise of all matrices such that no entry of $c$ is repeated in any column symbolically. Indeed, the matrices in $\M(c; (132), (123))$ and $\M(c;(123),(132))$  can be represented as
$$\bmatrix{c_1 & c_2 & c_3 \\ c_2 & c_3 & c_1 \\ c_3 & c_1 & c_2} \,\, \mbox{and} \,\, \bmatrix{c_1 & c_2 & c_3 \\ c_3 & c_1 & c_2 \\ c_2 & c_3 & c_1},$$ respectively. For any other choices of the pair $(P_1, P_2),$ at least one entry (symbol) is repeated in a column, and hence two entries of $c$ must be equal or all entries must be $\frac{1}{3}$ (for example, if an entry is same in the entire column) due to the constraint that the sum of entries in each column is $1.$ The following theorem characterizes $\pds_3$.

\begin{theorem} \label{thm1}
$\pds_3=\bf{\left[\M(c; (132), (123))\right]} \cup \bf{\left[\M(c;(123),(132))\right]}.$
\end{theorem}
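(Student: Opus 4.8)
The plan is to show that every PDS matrix of order $3$ falls into one of the two named cogredient classes, by a case analysis on how symbols of the generic vector $c$ can repeat within a column. Recall that a $3\times 3$ PDS matrix is obtained from a tuple $(P_1,P_2)$ of permutation matrices of order $3$ via Definition \ref{def:pds}, and that the column-sum constraint forces: whenever a symbol $c_i$ appears twice in some column, the remaining entry in that column equals $1-2c_i$, and whenever a symbol appears three times in a column all entries of $c$ equal $\frac{1}{3}$. So I would first split into the ``generic'' case, where no symbol of $c$ is repeated in any column, and the ``degenerate'' cases, where some repetition occurs.

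In the generic case, each column of $A$ is a rearrangement of $\{c_1,c_2,c_3\}$ with distinct entries; since each row is already a permutation of $c$, the array is a Latin square on the symbols $c_1,c_2,c_3$ with first row $(c_1,c_2,c_3)$. There are exactly two such Latin squares of order $3$ with prescribed first row, namely the two cyclic ones, which are precisely the representatives of $\left[\M(c;(132),(123))\right]$ and $\left[\M(c;(123),(132))\right]$ displayed in the excerpt. This handles the generic case directly, and I would note that these two classes are in fact each closed under right-multiplication by $\S_3$, so no further reduction is needed there.

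For the degenerate cases I would argue that any PDS matrix in which a symbol repeats in a column is in fact permutationally similar to a matrix already covered. If all entries are $\frac{1}{3}$, then $A=\frac13 J$ lies in both classes trivially (it is the value of the generic matrix at $c_1=c_2=c_3=\frac13$). Otherwise, some column has a repeated symbol, say $c_i$ appears twice and $1-2c_i$ once; by the doubly-stochastic constraint applied column by column and row by row, I would show this propagates so that, after a suitable permutation of the symbols (i.e. replacing $c$ by $cP$), the matrix has the form with one distinguished value on the diagonal — essentially a matrix of the type $\alpha I_3 + \beta(J-I_3)$ up to permutation, or a matrix with one column constant. In each such sub-case the resulting array, being a $3\times 3$ permutative matrix whose entries satisfy these linear relations, can be checked to coincide (as a pattern, hence as a member of one of the bracketed classes, after conjugating by a permutation matrix $X$) with a specialization of one of the two cyclic patterns; invoking the constructive observation preceding the theorem — that $XA$ for $X\in\S_n$ exhausts the cogredient class and that permutation-equivalent matrices are permutationally similar after absorbing a factor — lets me conclude that no new cogredient class arises. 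The main obstacle is precisely this bookkeeping in the degenerate cases: one must be careful that the linear constraints forced by double stochasticity, combined with the permutative structure, really do collapse every non-Latin-square pattern into a specialization of the two cyclic ones rather than producing a genuinely new symbolic pattern; I expect the cleanest route is to observe that for $n=3$ any permutative matrix is determined by its first row together with a choice of $(P_1,P_2)\in\S_3\times\S_3$, enumerate the $36$ tuples up to the cogredient relation $\rho$, and verify each representative either is one of the two cyclic patterns or, under the column-sum constraint, forces equalities among the $c_i$ that place it inside one of those two classes.
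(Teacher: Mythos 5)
Your proposal is correct and takes essentially the same route as the paper: the paper also rests on the observation (made in the paragraph preceding the theorem) that the two displayed classes are precisely the order-$3$ patterns with no symbol repeated in any column, while every other choice of $(P_1,P_2)$ forces equalities among the $c_i$ that turn the matrix into a specialization of one of these two patterns, the proof body then carrying out the same finite bookkeeping you propose, organized via $\{XA : X\in\S_3\}$ and cycle types rather than by counting Latin squares with prescribed first row. Two small inaccuracies are worth fixing, though neither is fatal: the two bracket classes are not individually closed under right multiplication by $\S_3$ (the paper's own proof uses $A\mapsto AQ$ with $Q=(23)$ to pass from one class to the other), and the degenerate matrices are not always of the form $\alpha I_3+\beta(J-I_3)$ up to permutation or constant in a column (for instance $\M(c;(123),(12))$ forces $c_3=c_1$ and places the distinguished symbol on an off-diagonal $3$-cycle, which is nonetheless the circulant pattern specialized at $c=(c_1,c_2,c_1)$), but both slips are absorbed by your fallback of enumerating the $36$ tuples.
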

\noindent\pf Let $A\in \M(c;P_1,P_2)$ where $P_1= (132),$ $P_2= (123)$ and $c=\bmatrix{c_1 & c_2 & c_3}$ is the generic vector. Then it is clear that $PAQ \in [\M((c;(123),(132))],$ where $P=I_3$ and $Q=(23).$ Hence to find all the cogredient classes in $\pds_3$ it is enough to consider the set of matrices $\{XA : X\in \S_3\}.$
Then $XA$ belongs to $ [\M((c;(123),(132))]$ for $X$ of cycle type $(1)+(2)$ and $XA$ is in $ [\M((c;(132),(123))]$ for $X$ of cycle type $(3)$ or $(1)+(1)+(1)$ i.e., $X=I_3.$ It is to be noted here that $\bf{\left[\M(c; (132), (123))\right]}$=$\left[\M(c; (132), (123))\right]$ and $\bf{\left[\M(c;(123),(132))\right]}$=$\left[\M(c;(123),(132))\right].$ Thus the desired result follows.
 $\hfill{\square}$

%where $(P,Q)\notin \{( (231), (312)), ((312),(231))\}.$ Then consider the generic matrices $XA$ for all $X\in\S_3.$ Then it is computational to verify that $XA$ is cogeri from the discussion above two cases arise.

%First assume that one entry is repeated in the entire column. Without loss of generality let $c_1$ is repeated in the first column. Then there can be two instances. Either $c_1=c_2=c_3$ or $2c_2+c_3=1=2c_3+c_2$ which implies $A=\frac{1}{3}J.$ Thus $A\in \M(c, (231), (312)) \cap \M(c,(312),(231))$ by setting $c_1=c_2=c_3.$

%Otherwise, if any entry of $c$ is not occupying an entire column, two cases can arise. Either two entries are repeated twice in two columns of $A$ or all the three entries are repeated twice in three columns of $A.$ Without loss of generality, let $c_1, c_2$ are repeated twice in two columns. Then $c_1,c_2,c_3$ must also satisfy $2c_1+c_3=1=2c_2+c_3$ which implies $c_1=c_2.$ Since it does not impose any condition on numerical values of the any of $c_i$s, $A\in \M(c, (231), (312))$ or $A\in  \M(c,(312),(231))$ by setting $c_1=c_2.$ Finally, if all the entries of $c$ are repeated in all the column twice, it must satisfy $3$ affine equations solving which it can be shown that $A=\frac{1}{3}J.$  This completes the proof.

\subsection{PDS matrices of order $4$}\label{sec:3}

Recall that $\pds_4=\cup_{P,Q,R\in\S_4}[\M(c;P,Q,R)]$ where $c=\bmatrix{c_1 & c_2 & c_3 & c_4},$ $\sum c=1$ (see Definition \ref{def:pds}). Obviously, classification and description of cogredient classes of $\pds_4$ would be cumbersome than the earlier case of $\pds_3$ since the number of possible choices of permutations $P,Q,R\in\S_4$ to define $\M(c;P,Q,R)$ is huge (i.e $24 \times 24 \times 24$). We divide $\pds_4$ broadly into two subsets of generic matrices. First we consider generic matrices in which every column has no repetition of its entries symbolically.

Then for a generic vector $c=\bmatrix{c_1 & c_2 & c_3 & c_4}$ there are $24$ choices of the triplet of permutations $(P,Q,R)$ for which any generic matrix in $[\M(c;P,Q,R)]$ has all the entries of $c$ in every column of the matrix. This follows from the fact that there are $24$ semi reduced Latin squares of order $4$ \cite{Latin}. However such classes could be cogredient (see Definition \ref{def:cm}). The following theorem provides such cogredient classes of $\pds_4.$

%In that case $\Lambda(\M(c,P_x,Q_y,R_z))=\Lambda(\M(c,P_\alpha,Q_\beta,R_\gamma))$ for some $(P_x,Q_y,R_z) \neq (P_\alpha,Q_\beta,R_\gamma).$ Thus the aim here is to derive non-cogredient subsets among those $24$. Next goal is to identify sets of matrices in $\pds_4$ that do not belong to the above classes of non-cogredient subsets.

\begin{table}[h!]
  \begin{center}
            \begin{tabular}{|c|c|c|c|}
    \hline
     \multirow{2}{*}{\textbf{Class}} & \multirow{2}{*}{$(P,Q,R)$} & \multirow{2}{*}{$(P,Q,R)$} &  \multirow{2}{*}{$X$}\\
       &  &  &  \\
         \hline
      \hline
     \multirow{2}{*}{$\C_1$} & \multirow{2}{*}{$((12)(34),(1324),(1423))$} & $((1243),(1342),(14)(23))$ & $(1)(24)(3)$ \\
      &  & $((1234),(13)(24),(1432))$ & $(1)(23)(4)$\\
      \hline \hline
       \multirow{2}{*}{$\C_2$} & \multirow{2}{*}{$((12)(34),(14)(23),(13)(24))$} & $((13)(24),(12)(34),(14)(23))$ & $(1)(24)(3)$ \\
      &  & $((14)(23),(13)(24),(12)(34))$ & $(1)(23)(4)$\\
      \hline \hline
\multirow{11}{*}{$\C_3$} & \multirow{11}{*}{$((1432),(1234),(13)(24))$} & $((1342),(14)(23),(1243))$ & $(1)(2)(34)$ \\
      &  & $((1243),(14)(23),(1342))$ & $(1324)$\\
 &  & $((13)(24),(1432),(1234))$ & $(1)(24)(3)$\\
 &  & $((13)(24),(1234),(1432))$ & $(1234)$\\
 &  & $((1423),(12)(34),(1324))$ & $(1)(243)$\\
 &  & $((1423),(1324),(12)(34))$ & $(1243)$\\
 &  & $((1234),(1432),(13)(24))$ & $(13)(24)$\\
 &  & $((1324),(12)(34),(1423))$ & $(123)(4)$\\
 &  & $((14)(23),(1243),(1342))$ & $(1)(234)$\\
 &  & $((1324),(1423),(12)(34))$ & $(1)(23)(4)$\\
 &  & $((14)(23),(1342),(1243))$ & $(124)(3)$\\
      \hline \hline
$\C_4$ & $((13)(24),(14)(23),(12)(34))$  & $((14)(23),(12)(34),(13)(24))$ & $(1)(23)(4)$\\
      \hline \hline
$\C_5$ & $((12)(34),(13)(24),(14)(23))$  & - & - \\
      \hline \hline
 \multirow{2}{*}{$\C_6$} & \multirow{2}{*}{$((12)(34),(1423),(1324))$} & $((1432),(13)(24),(1234))$ & $(1)(23)(4)$ \\
      &  & $((1342),(1243),(14)(23))$ & $(1)(24)(3)$\\
\hline
          \end{tabular}
\caption{Classes $\C_i, i=1,\hdots,6$ of matrices in $\pds_4$. Each class $\C_i$ is the union of classes of matrices $[\M(c;P,Q,R)]$ where the triplets $(P,Q,R)$ are described in the second and third columns of the $i$th row of the table. Further, the  generic matrices defined by $(P,Q,R)$ in second column and third column are permutationally similar through the permutation matrix in the last column(see Definition \ref{def:cm}).} \label{table:cc}.
  \end{center}
\end{table}

\begin{theorem}\label{thm:norepeat}
Let $c$ be a generic row vector. Then any matrix $A\in [\M(c;P,Q,R)],$ $P,Q,R\in \S_4$ such that no entry in any column of $A$ is repeated belongs to $\bigcup_i \C_i$ where $\C_i, i=1,\hdots,6$ are given in Table \ref{table:cc}.
 %For every $i$th row of the Table \ref{table:cc}, the class $[\M(c;P,Q,R)]$ given by the triplet $(P,Q,R)$ in second column is cogredient to the class $[\M(c;P,Q,R)]$ given by $(P,Q,R)$ in the third column, and the matrix $X$ given in the last column of the table defines the cogredient relation.
\end{theorem}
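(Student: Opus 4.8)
The plan is to turn the statement into a finite enumeration of the Latin squares of order $4$ followed by a bookkeeping of the cogredience orbits. First I would make the reduction indicated just before the statement precise: if $A\in[\M(c;P,Q,R)]$ has no symbol of $c$ repeated in a column, then the rows of $A$ are permutations of $c$ and so are its columns, i.e. the pattern of $A$ is a Latin square on the alphabet $\{c_1,c_2,c_3,c_4\}$. The generic matrix $\M(c;P,Q,R)$ representing the class (see (\ref{def:class1})) has first row exactly $c_1\,c_2\,c_3\,c_4$, so this pattern is a semi-reduced Latin square of order $4$, and conversely each semi-reduced Latin square is the generic matrix of exactly one class $[\M(c;P,Q,R)]$. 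Since there are precisely $24$ semi-reduced Latin squares of order $4$ \cite{Latin}, there are precisely $24$ classes $[\M(c;P,Q,R)]$ with no column repetition, and it is enough to exhibit these $24$ triples $(P,Q,R)$, verify that they are exactly the ones listed in the second and third columns of Table \ref{table:cc}, and check that they fall, under cogredience, into the six classes $\C_1,\dots,\C_6$.

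For the enumeration I would list the $24$ triples, a convenient organization being by the two isotopy types of order-$4$ Latin squares: $18$ of the $24$ are isotopic to the cyclic ($\mathbb{Z}_4$) table and $6$ to the Klein-four ($\mathbb{Z}_2\times\mathbb{Z}_2$) table, and writing out the members of each type makes it transparent that the list is complete and has no repetitions. For each triple I would then carry out the constructive procedure of Section 2, which amounts to computing, for the generic matrix $A$ of each of the $24$ classes and each $X\in\S_4$, the conjugate $X^TAX$, normalizing it to semi-reduced form by a relabelling of the symbols $c_i$ (which keeps it inside a single $[\M(c;\cdot)]$ class), and recording which of the $24$ classes thereby get identified. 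This sorts the $24$ triples into the six groups of Table \ref{table:cc}; within each group the permutation matrix $X$ in the last column is the witness of the cogredience, and $X^TAX=B$ for the two generic matrices involved is a one-line matrix check on each line (for $\C_1$, say, $X=(1)(23)(4)$ carries the generic matrix of $((12)(34),(1324),(1423))$ onto a relabelling of the generic matrix of $((1234),(13)(24),(1432))$). Since the group sizes sum to $3+3+12+2+1+3=24$, every no-column-repetition class is accounted for, so any such $A$ lies in $\bigcup_i\C_i$.

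The one genuinely laborious ingredient is this last bookkeeping, and the point requiring care is completeness --- making sure that no identification among the $24$ triples is missed (so that the six groups really are the full cogredience orbits) and none is made in error. To guard against this I would cross-check the partition against cogredience invariants: the number of $2\times 2$ Latin subsquares separates the two isotopy types (it is $4$ for the $\mathbb{Z}_4$ type and $12$ for the Klein-four type), and for the finer splitting within a type the characteristic polynomial of the generic matrix, regarded as a polynomial in $c_1,\dots,c_4$, is a cogredience invariant (unchanged by conjugation, and by the symbol relabelling $c\mapsto cP$ up to renaming the variables). These invariants also show $\C_1,\dots,\C_6$ to be pairwise non-cogredient, which is in any event confirmed a posteriori by the distinct eigenvalue regions obtained for them in the rest of the section.
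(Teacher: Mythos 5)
Your proposal is correct and follows essentially the same route as the paper: both identify the no-column-repetition classes with the $24$ semi-reduced Latin squares of order $4$ (the paper generates them as $XA_i$, $X$ fixing $1$, from four reduced Latin squares; you enumerate them directly by isotopy type) and then sort them into the six cogredience orbits by explicit conjugation, with the bulk of the finite verification left as routine computation in both cases. Your cross-check via intercalate counts and the characteristic polynomial as cogredience invariants is a sensible safeguard the paper does not include, but it does not change the nature of the argument.
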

\noindent\pf
%The proof is computational and easy to check.
We consider the matrices $\M(c;(12)(34),(1423),(1324)),$
$\M(c;(12)(34),(13)(24),(14)(23))$,\\$\M(c;(1432),(13)(24),(1234))$ and $\M(c;(1342),(1243),(14)(23))$ correspond to $4$ reduced Latin squares of order $4$ given by
$$A_1=\bmatrix
{c_1&c_2&c_3&c_4\\
c_2&c_1&c_4&c_3\\
c_3&c_4&c_2&c_1\\
c_4&c_3&c_1&c_2},
A_2=\bmatrix
{c_1&c_2&c_3&c_4\\
c_2&c_1&c_4&c_3\\
c_3&c_4&c_1&c_2\\
c_4&c_3&c_2&c_1},
A_3=\bmatrix
{c_1&c_2&c_3&c_4\\
c_2&c_3&c_4&c_1\\
c_3&c_4&c_1&c_2\\
c_4&c_1&c_2&c_3},
A_4=\bmatrix
{c_1&c_2&c_3&c_4\\
c_2&c_4&c_1&c_3\\
c_3&c_1&c_4&c_2\\
c_4&c_3&c_2&c_1}$$ respectively. It can be verified that any generic matrix, without repetition of entries of $c$ in any of its column, can be obtained as $XA$ where $A \in \{A_1,A_2,A_3,A_4\}$ and $X \in \S_4$ such that $X_{11}=1.$ Hence one obvious conclusion is that the triplets $(P,Q,R),$ where $P,Q,R$ are chosen from one of the sets from the following set, all the $\pds_4$ matrices mentioned in Table \ref{table:cc} can be obtained. $$\{(12)(34),(1324),(1423)\}, \{(14)(23),(1243),(1342)\}, \{(13)(24),(1234),(1432)\},\{(12)(34),(14)(23),(13)(24)\}.$$

Below we show that $A_1$ and $A_2$ do not belong to the isotopic classes of each other, however, $A_3, A_4$ belong to one of those isotopic classes. For any $X\in \S_4,$ $XA_1$ belongs to one of the classes $\C_1,\C_3,\C_6,$ and $XA_2$ belongs to either of the classes $\C_2,\C_4,\C_5.$
%All the matrices generated in such a way are distinct because if possible let $PA_i=QA_j$ for some $P,Q\in \S_4$ and $i \neq j.$ It implies $A_j=Q^{-1}PA_i$ which is not possible.

 We prove the cogredient relation of classes $[\M(c;P,Q,R)]$ defined by triplets $(P,Q,R)$ in second and third columns for the class $\C_1.$

The generic matrices corresponding to the set of matrices $\M(c;(12)(34),(1324),(1423)),$ $\M(c;(1243),(1342),(14)(23))$ and $\M(c;(1234),(13)(24),(1432))$ are given by \begin{equation}\label{Thm4.1:eq1} A= \bmatrix{c_1 & c_2 & c_3 & c_4 \\ c_2 & c_1 & c_4 & c_3\\ c_4 & c_3 & c_1 & c_2 \\ c_3 & c_4 & c_2 & c_1}, B=\bmatrix{c_1 & c_2 & c_3 & c_4 \\ c_3 & c_1 & c_4 & c_2 \\ c_2 & c_4 & c_1 & c_3\\ c_4 & c_3 & c_2 & c_1} \, \mbox{and} \,\, C= \bmatrix{c_1 & c_2 & c_3 & c_4 \\ c_4 & c_1 & c_2 & c_3 \\ c_3 & c_4 & c_1 & c_2 \\ c_2 & c_3 & c_4 & c_1}\end{equation} respectively. Then observe that
$$ X^TBX=\bmatrix{c_1 & c_4 & c_3 & c_2 \\ c_4 & c_1 & c_2 & c_3\\ c_2 & c_3 & c_1 & c_4 \\ c_3 & c_2 & c_4 & c_1}\in \left[\M(c;(12)(34),(1324),(1423))\right]$$
for $c=\bmatrix{c_1 & c_4 & c_3 & c_2}$ and hence by Definition \ref{def:cm}, $\left[\M(c;(12)(34),(1324),(1423))\right]$ is cogredient to $\left[\M(c;(1243),(1342),(14)(23))\right].$ Further $$X^TCX=\bmatrix{c_1 & c_3 & c_2 & c_4 \\ c_3 & c_1 & c_4 & c_2\\ c_4 & c_2 & c_1 & c_3 \\ c_2 & c_4 & c_3 & c_1}\in \left[\M(c;(12)(34),(1324),(1423))\right]$$ for $c=\bmatrix{c_1 & c_3 & c_2 & c_4}.$
Similarly the other cogredient classes can be obtained. $\hfill{\square}$

%Observe that $\C_i=\bf{[\M(c,P,Q,R)]}$ corresponding to the triplet $(P,Q,R)$ which is in the second column of the $i$th row of Table \ref{table:cc}.

%\textcolor{blue}{$[\M(c,P,Q,R)]$ defined by the triplet $(P,Q,R)$ in the second column of a class $\C_i$ is cogredient to the set(s) $[\M(c,P',Q',R')]$ defined by the triplet(s) $(P',Q',R')$ in the third column of the rows of $i^{th}$ block. We named all the matrices in $\C_i$ together by $\bf{[\M(c,P,Q,R)]}.$}

Next we consider generic matrices that contain columns in which at least one entry of $c$ is repeated. Then for the generic vector $c=\bmatrix{c_1 & c_2 & c_3 & c_4}$ three types of matrices can exist. Let us denote $\T_i$ to be the type of generic matrices in which every column is allowed repetitions of at least one entry of $c$ precisely $i$ times but not $i+1$ times for $i=2,3,4.$ For a fixed $i$ there can be several distinct cogredient classes of PDS matrices. For example, if $i=4$ then there can be at least two cogredient classes of matrices: an entry is repeated in only one column of the matrix or two entries are repeated in two columns of the matrix. Obviously, four entries of $c$ can be repeated in $4$ columns of the matrix, and the matrix is $\frac{1}{4}J$ but it is not a non-trivial class since this matrix belongs to $\M(c;P,Q,R)$ for every $(P,Q,R)$ by setting all the entries of $c$ equal.

%Further, there can be subclasses of matrices in each class under $\T_i$ that are symbolically non-cogredient.

The following theorem provides all cogredient classes of $\pds_4$ such that any matrix which belongs to a class has at least one entry of $c$ repeated in at least one of the columns of the matrix.

\begin{table}[h!]
  \begin{center}
            \begin{tabular}{|c|c|c|}
    \hline
     \textbf{Type} & \textbf{Class} & $(P,Q,R)$  \\
        \hline
      \hline
  \multirow{8}{*}{$\T_4$} & $\C_7$ & $((1)(2)(3)(4),(1)(2)(34),(1)(2)(34))$ \\
 & $\C_8$ & $((1)(2)(34),(1)(2)(34),(1)(2)(3)(4))$ \\
\cline{2-3}
          & $\C_9$ & $((1)(2)(34),(1)(243),(1)(24)(3))$ \\
 & $\C_{10}$ & $((1)(2)(34),(1)(24)(3),(1)(243))$\\
 &  $\C_{11}$ & $((1)(24)(3),(1)(2)(34),(1)(243))$  \\
 & $\C_{12}$ & $((1)(243),(1)(2)(34),(1)(24)(3))$ \\
 & $\C_{13}$ & $((1)(24)(3),(1)(243),(1)(2)(34))$  \\
 & $\C_{14}$ & $((1)(243),(1)(24)(3),(1)(2)(34))$\\
        \hline  \hline
  \multirow{12}{*}{$\T_3$} & $\C_{15}$ & $((1)(2)(34),(1)(24)(3),(142)(3))$ \\
 & $\C_{16}$ & $((1)(2)(34),(1)(243),(1432))$ \\
 & $\C_{17}$ & $((1)(234),(1)(24)(3),(14)(2)(3))$ \\
 & $\C_{18}$ & $((1234),(14)(2)(3),(124)(3))$ \\
 & $\C_{19}$ & $((1)(24)(3),(1)(2)(34),(142)(3))$ \\
 & $\C_{20}$ & $((1)(2)(34),(142)(3),(1)(24)(3))$ \\
 & $\C_{21}$ & $((1)(2)(34),(1432),(1)(243))$ \\
 & $\C_{22}$ & $((14)(2)(3),(1234),(124)(3))$ \\
 & $\C_{23}$ & $((1)(243),(1)(2)(34),(1432))$ \\
 & $\C_{24}$ & $((1)(24)(3),(1)(234),(14)(2)(3))$ \\
 & $\C_{25}$ & $((1)(234),(14)(2)(3),(1)(24)(3))$ \\
 & $\C_{26}$ & $((1234),(124)(3),(14)(2)(3))$ \\
\hline\hline
  \multirow{12}{*}{$\T_2$} & $\C_{27}$ & $((1)(2)(3)(4),(12)(34),(12)(34))$ \\
 & $\C_{28}$ & $((12)(34),(12)(34),(1)(2)(3)(4))$ \\
 & $\C_{29}$ & $((1)(2)(34),(12)(3)(4),(12)(34))$ \\
 & $\C_{30}$ & $((12)(34),(12)(3)(4),(1)(2)(34))$ \\
\cline{2-3}
 & $\C_{31}$ & $((1)(234),(12)(34),(132)(4))$ \\
 & $\C_{32}$ & $((1)(243),(123)(4),(13)(24))$ \\
 & $\C_{33}$ & $((132)(4),(12)(34),(1)(234))$ \\
 & $\C_{34}$ & $((1)(23)(4),(12)(34),(1342))$ \\
 & $\C_{35}$ & $((1)(23)(4),(1243),(13)(24))$ \\
 & $\C_{36}$ & $((1342),(12)(34),(1)(23)(4))$ \\
 & $\C_{37}$ & $((1)(23)(4),(1342),(12)(34))$ \\
 \hline
 \end{tabular}
\caption {Classes of matrices $\C_i=\bf{[\M(c;P,Q,R)]}$ where the triplet $(P,Q,R)$ is in the last column of the $i$th row, $ i=7,8,\hdots,37.$ Each $\C_i$ is closed under cogredient relation and contain matrices having at least one entry repeated in one of the columns. Explicit symbolic form of the generic matrix which represents $\C_i$ is listed in the Appendix.}\label{table:cc2}
  \end{center}
\end{table}

\begin{theorem}\label{thm:repeat}
Let $A$ be a generic matrix in $\pds_4$ for the generic vector $c=\bmatrix{c_1 & c_2&c_3&c_4}$ such that at least one of the columns of $A$ contains a repeated entry. Then $A$ represents $\C_i,$ $i\in\{7,8,\hdots,37\}$ where $\C_i$s are defined in Table \ref{table:cc2}. In particular, $A$ represents $\C_i$ for some $i\in\{1,\hdots,6\}$ when some of the entries of $c$ are equal.
\end{theorem}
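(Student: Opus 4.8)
The plan is to mirror the strategy already used for Theorem \ref{thm:norepeat}, but now applied to generic matrices in which at least one column has a repeated symbol. The first step is to organize the argument by the type $\T_i$, $i=2,3,4$: a column of a generic PDS matrix of order $4$ in which the most-repeated entry of $c$ occurs exactly $i$ times, and no column has a repetition of multiplicity $i+1$. Since each column must sum to $1$ and has only four slots filled from $\{c_1,c_2,c_3,c_4\}$, the multiplicity pattern in a single column is one of: all distinct ($1{+}1{+}1{+}1$), one pair ($2{+}1{+}1$), two pairs or a triple handled via $\T_2$/$\T_3$ bookkeeping, or one entry four times ($\T_4$). Because the matrix is permutative, the pattern of which symbol sits where is governed by the triple $(P,Q,R)$, so fixing the column-multiplicity type restricts $(P,Q,R)$ to a finite, explicitly enumerable list. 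I would enumerate, for each type, a small set of ``seed'' generic matrices — one per permutation-equivalence class — exactly as $A_1,\dots,A_4$ were chosen in Theorem \ref{thm:norepeat}, by normalizing so that $X_{11}=1$ (i.e. the first row is $\bmatrix{c_1 & c_2 & c_3 & c_4}$) and then listing the admissible completions consistent with the repetition constraint.

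The second step is the reduction to permutation similarity. For any seed $A$ and any $X\in\S_4$, the matrix $XA$ is again a generic matrix of the same type (left-multiplication by a permutation only reorders rows, hence does not change which symbols appear in a column nor their multiplicities). As explained in the discussion preceding the theorem, every generic matrix in $\pds_4$ of the given type is permutation-equivalent to one of the seeds, say $A=\hat A$, via $B=P\hat A Q$; writing $B=Q^T(QP\hat A)Q=Q^T(X\hat A)Q$ with $X=QP$, we see $B$ is permutationally similar to $X\hat A$ for some $X\in\S_4$. Therefore the complete list of cogredient classes of the given type is obtained by running over all $X\in\S_4$, computing the triple $(P,Q,R)$ that $X\hat A$ represents, and bucketing these $24$ matrices into cogredient classes via Definition \ref{def:cm}. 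Carrying this out for the seeds of type $\T_4$ produces the classes $\C_7,\dots,\C_{14}$; for type $\T_3$ the classes $\C_{15},\dots,\C_{26}$; and for type $\T_2$ the classes $\C_{27},\dots,\C_{37}$. For each bucket I would exhibit, exactly as in the $\C_1$ computation in Theorem \ref{thm:norepeat}, the explicit permutation matrix $X$ with $X^TAX=B$ that witnesses cogredience of the listed triples; the explicit generic representatives are recorded in the Appendix and can be verified entrywise.

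The third step handles the final sentence: if some entries of $c$ are equal, a generic matrix $A$ that a priori represents a ``no-repetition'' class $\C_i$, $i\in\{1,\dots,6\}$, actually acquires repeated entries in its columns. Concretely, specializing $c_k=c_\ell$ in any $A_j$ of Theorem \ref{thm:norepeat} collapses it to a matrix whose column-multiplicity type is one of $\T_2,\T_3,\T_4$, and one simply reads off which $\C_i$, $i\in\{7,\dots,37\}$, it falls into by matching the repetition pattern; the degenerate case $c_1=c_2=c_3=c_4$ gives $\tfrac14 J$, which as noted is not a nontrivial class. This gives the ``in particular'' clause and also shows the classes overlap only on such degenerate specializations.

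The main obstacle is the completeness of the seed enumeration for types $\T_2$ and $\T_3$: one must be sure that every admissible column-multiplicity pattern, combined with the doubly-stochastic column-sum constraint and the permutative row structure, has been accounted for, so that no cogredient class is missed. This is a finite but delicate case analysis — the number of triples $(P,Q,R)$ is $24^3$, and the reduction to $X\hat A$ with $X\in\S_4$ is what makes it tractable — and the risk is an overlooked pattern rather than a subtle argument. I would discharge it by a systematic check: list all multiset partitions of a $4$-slot column compatible with drawing from $\{c_1,c_2,c_3,c_4\}$, for each determine the forced structure of $(P,Q,R)$ up to the $X_{11}=1$ normalization, and confirm that the $31$ classes in Table \ref{table:cc2} exhaust them, with the cogredience buckets and witnessing permutations as tabulated.
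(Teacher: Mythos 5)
Your overall strategy --- splitting by type $\T_4,\T_3,\T_2$, choosing normalized seed matrices with first row $\bmatrix{c_1&c_2&c_3&c_4}$, reducing permutation equivalence $B=P\hat AQ$ to permutation similarity with $X\hat A$, $X=QP$, and bucketing the orbit $\{X\hat A: X\in\S_4\}$ into cogredient classes --- is exactly the paper's approach, and that part of your plan is sound. But there is a genuine gap in your completeness step. You propose to ``confirm that the $31$ classes in Table \ref{table:cc2} exhaust'' all admissible column-multiplicity patterns; they do not, and that confirmation would fail as stated. There are permutative patterns with repeated column entries that match none of the $31$ seeds symbolically --- e.g.\ a $\T_4$ matrix with three identical rows, or a $\T_2$ matrix whose rows are $c$, $c$, $cP$, $cP$ with $P=(1234)$. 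The paper eliminates these not by pattern-matching but by invoking the doubly stochastic constraint: requiring each column to sum to $1$ in such a pattern forces equalities among the symbols (e.g.\ $c_1=c_3,\ c_2=c_4$, or $c_1=c_3=c_4$, or all four equal giving $\tfrac14 J$), after which the specialized matrix is recognized as a member of one of the already-listed classes $\C_1,\dots,\C_{14}$ with a degenerate generic vector. Your proposal never identifies this mechanism, and without it the case analysis cannot close.

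Relatedly, your third step runs the ``in particular'' clause backwards. The claim is not that specializing $c_k=c_\ell$ in a no-repetition class $\C_i$, $i\le 6$, lands you in some $\C_j$, $j\ge 7$; it is that a matrix with repeated column entries whose pattern fits none of $\C_7,\dots,\C_{37}$ is forced (by the column-sum argument above) to have equal entries in $c$ and then \emph{represents} some $\C_i$ with $i\in\{1,\dots,6\}$ --- for instance the paper's example with $c_1=c_4$ lands in $\C_3$ for $c=\bmatrix{c_1&c_2&c_3&c_1}$. This directional point is not cosmetic: it is precisely the step that guarantees no cogredient class has been missed, which is the obstacle you yourself flag as the main risk.
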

\noindent\pf Let $A\in\T_4.$ Then consider the following two cases. First, two columns of $A$ contain entries which are repeated $4$ times in each of the columns. Without loss of generality assume that $c_1$ and $c_2$ are repeated $4$ times in first two columns. Then $c_3,c_4$ are repeated twice in the other two columns. Let $A$ be of the form \begin{eqnarray}\label{matrix:repeat1}\bmatrix{c_1&c_2&c_3&c_4\\c_1&c_2&c_3&c_4\\c_1&c_2&c_4&c_3\\c_1&c_2&c_4&c_3}\end{eqnarray} which represents  $\M(c;I_4,(34),(34)).$ Now construct the set of matrices $\{XA : X\in \S_4\}$, and identify matrices which are not cogredient to (\ref{matrix:repeat1}). Then an easy calculation shows that there can be two different cogredient classes of matrices represented by $\C_7$ and $\C_8.$ For other choices of entries in third and fourth columns of $A,$ setting the column sum to be $1$ the matrix $A$ becomes $\frac{1}{4}J_4.$

%Thus $A\in \C_7 \cup \C_8.$

Otherwise, let $A\in\T_4$ have only one column such that an entry of $c$ is repeated $4$ times in that column. Suppose $c_1$ is such an entry. Suppose among other columns of $A,$ one column contains any $2$ entries from $c_2,c_3,c_4$ that are repeated $2$ times and other $2$ columns contain one of the $c_2, c_3, c_4$ that are repeated two times. Then one of the possible forms of $A$ is $$\bmatrix{c_1&c_2&c_3&c_4\\c_1&c_2&c_4&c_3\\c_1&c_3&c_4&c_2\\c_1&c_4&c_3&c_2}$$ which represents $\M(c;(34),(243),(24)).$ It can be verified that the other generic matrices in $\T_4$ that are of the similar pattern as that of $A$ can be obtained by $PAQ$ for $P,Q\in \S_4.$ Then construct the set $\{XA : X\in\S_4\},$ and identify the matrices in it corresponds to distinct cogredient classes. Then it follows that the classes are given by $\C_9,\hdots,\C_{14}.$

On any other occasions it can be verified that $A\in\T_4$ shall imply $A=\frac{1}{4}J_4.$ For example, let $$A=\bmatrix{c_1&c_2&c_3&c_4\\ c_1&c_2&c_3&c_4\\ c_1&c_2&c_3&c_4\\ c_1&c_3&c_4&c_2}.$$ Observe that $A$ does not belong to any of the cases discussed. However column sum equals $1$ yields $c_1=c_2=c_3=c_4,$ and hence $A=\frac{1}{4}J.$

Let $A\in\T_3,$ that is, in one column of $A,$ an entry of $c$ is repeated $3$ times and no entry of $c$ is repeated more than $3$ times in any column. Then there can be following instances other than $A$ to be $\frac{1}{4}J$. One another entry of $c$ is repeated $3$ times in an another column, two distinct entries are repeated in one column $2$ times, and the  other column contains all four entries of $c.$ Then one of the possible forms of $A$ is $$\bmatrix{c_1&c_2&c_3&c_4\\c_1&c_2&c_4&c_3\\c_1&c_4&c_3&c_2\\c_2&c_4&c_3&c_1}$$ which represents $ \M(c;(34),(24),(142))$ and it is the unique matrix from $\T_3$ of this form up to the pre and post multiplication by permutation matrices $\in \S_4.$ Now construct the set of matrices $\{XA : X\in \S_4\}$, and identify the cogredient matrices in this set. Then it can be verified that there are $12$ distinct cogredient classes of matrices given by $\C_{15},\hdots,\C_{26}.$

There can be other arrangements of entries of $A.$ For example, let $$A=\bmatrix{c_1 & c_2 & c_3 & c_4 \\ c_1 & c_3&c_4&c_2\\ c_1 & c_4&c_2&c_3\\ c_2&c_3&c_1&c_4}$$ which does not fit with any other instances discussed. However employing the condition of column sum to be $1,$ it follows that $c_1=c_3=c_4.$ In that case observe that $A\in[\M(c;(1243),(14)(23),(1342))] \in\C_2$ where $c=\bmatrix{c_1 & c_2 & c_1 & c_1}.$ Similarly for any other specific arrangements of entries an easy calculation show that $A\in\C_i$ for some $i\in\{1,\hdots,14\}$ for some particular choice of the generic vector $c.$

Let $A\in\T_2.$ Let there be a column of $A$ such that one entry of $c$ is repeated $2$ times. Without loss of generality let that column be the first column of $A.$ Suppose the other two positions of the first column are occupied by an another entry of $c$ repeated $2$ times. Then consider the following cases.

In one case, one other column comprises of two entries which are present in first column and each repeats $2$ times. Rest  two columns have two entries other than that of column $1,$ each of which is repeated two times. Then $$A_1=\bmatrix{c_1&c_2&c_3&c_4\\c_1&c_2&c_3&c_4\\c_2&c_1&c_4&c_3\\c_2&c_1&c_4&c_3}~\mathrm{and} ~A_2=\bmatrix{c_1&c_2&c_3&c_4\\c_1&c_2&c_4&c_3\\c_2&c_1&c_3&c_4\\c_2&c_1&c_4&c_3}$$ which represent $\M(c;I_4,(12)(34),(12)(34))$ and $\M(c;(34),(12),(12)(34)),$ respectively, are the only two generic forms of $A$ such that they are not permutation equivalent.
%( all other three columns of $A$ comprise of two entries each of which is repeated $2$ times)

In another case,  let one of the three columns contains two entries of  $c$ that are repeated $2$ times and these entries do not appear in the first column.  The remaining two columns consist of all four entries of $c$ in different places. Then the possible forms of $A$ are $$A_3=\bmatrix{c_1&c_2&c_3&c_4\\c_1&c_4&c_2&c_3\\c_2&c_1&c_4&c_3\\c_2&c_3&c_1&c_4}~ \mathrm{and} ~A_4=\bmatrix{c_1&c_2&c_3&c_4\\c_1&c_3&c_2&c_4\\c_2&c_1&c_4&c_3\\c_2&c_4&c_1&c_3}$$ which represent $\M(c;(234),(12)(34),(132))$ and $\M(c;(23),(12)(34),(1342))$ respectively, such that any other matrix from $\T_2$ having this pattern can be obtained as $PA_3Q$ or $PA_4Q$ for some $P,Q \in \S_4.$ Then deriving the sets of generic matrices $\{XA_i : X\in \S_4\},i=1,\hdots 4$ the cogredient classes can be identified that are of the type $\T_2,$ and these are the classes $\C_{27},\hdots,\C_{37}$ in Table \ref{table:cc2}.

 For any other combination of positions of the entries, the corresponding generic matrix $A\in\C_i$ for some $i=1,\hdots,14.$ For example, let  $$A=\bmatrix{c_1 & c_2 & c_3&c_4\\ c_1 &c_2&c_3&c_4\\ c_2&c_3&c_4&c_1\\ c_2&c_3&c_4&c_1}$$ for which the column sum of $A$ equals $1$ implies that $c_1=c_3,c_2=c_4.$ Hence $A$ represent $\M(c;(13)(24),(12)(34),(14)(23))\in \C_2$ for  $c=\bmatrix{c_1 & c_2 & c_1& c_2}.$ If the non repeated entries in the first column of $A$ are distinct, then all such combinations of positions of the entries of $A$ imply $A\in\C_i$ for some $i=1,\hdots,6.$  For example, let $$A=\bmatrix{c_1 & c_2 & c_3&c_4\\ c_1 &c_4&c_2&c_3\\ c_2&c_3&c_1&c_4\\ c_3&c_1&c_4&c_2}.$$ Then the column sum of $A$ equals $1$ implies that $c_1=c_4.$ The pattern of the entries of $A$ does not fall in any instances above. However, $A$ represent $\M(c;(1234),(1432),(13)(24))\in\C_3$ for $c=\bmatrix{c_1 & c_2 & c_3 & c_1}$ as described in Table {\ref{table:cc2}.  This completes the proof. $\hfill{\square}$

% In Table\ref{table:cc2} there are many cogredient matrices under each distinct classes. Hence we do not mention all these.

Note that if the generic vector $c$ be such that some of the entries of $c$ are related to each other by the equality signs. Then from Table \ref{table:cc} and Table \ref{table:cc2} it is clear that the corresponding generic matrices $\M(c;P,Q,R)$ may fall in different cogredient classes concurrently.

%Thus we have the following theorem.
%\begin{theorem}
%$\pds_4=\bigcup_{i=1}^{37} \C_i.$
%\end{theorem}
%\noindent\pf The proof follows from Theorem \ref{thm:norepeat} and Theorem \ref{thm:repeat}. $\hfill{\square}$

\section{Eigenvalue region of PDS matrices}
 In this section we determine the eigenvalue region of PDS matrices of order up to $4,$ based on the classification of all such matrices in the previous section. Also, we show some results concerning the eigenvalue region of $\pds_n$ for $n \geq 2.$ 
\subsection{$\pds$ matrices of order $2$ and $3$}
It is trivial to check that $\pds_2$=$\left\{\bmatrix{
a & 1-a \\
1-a & a}:0\leq a \leq 1\right\}$. Obviously $\Lambda(\pds_2) = \Pi_2=\omega_2.$

Now we call the following result from Theorem \ref{thm1}.
\begin{theorem} \label{eig_order3}
$\Lambda(\pds_3)= \Pi_2 \bigcup \Pi_3.$
\end{theorem}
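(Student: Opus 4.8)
The plan is to lean entirely on the classification in Theorem~\ref{thm1}. Since $\pds_3$ splits into exactly the two cogredient classes $[\M(c;(132),(123))]$ and $[\M(c;(123),(132))]$, and since matrices inside a cogredient class are permutationally similar (hence isospectral) to its generic matrix while re-instantiating the symbolic vector under column relabellings leaves the eigenvalue region unchanged, one has
\[
\Lambda(\pds_3)=\Lambda\big(\M(c;(132),(123))\big)\cup\Lambda\big(\M(c;(123),(132))\big),
\]
each region taken as $c=\bmatrix{c_1&c_2&c_3}$ ranges over the simplex $\Delta=\{c\ge 0 : c_1+c_2+c_3=1\}$. Using the symbolic forms displayed just before Theorem~\ref{thm1}, this reduces the problem to computing the eigenvalues, over $c\in\Delta$, of the symmetric ``left circulant'' $L(c)=\bmatrix{c_1&c_2&c_3\\ c_2&c_3&c_1\\ c_3&c_1&c_2}$ and of the ordinary circulant $C(c)=\bmatrix{c_1&c_2&c_3\\ c_3&c_1&c_2\\ c_2&c_3&c_1}$.

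For $C(c)$, being circulant its eigenvalues are $c_1+c_2\zeta^{k}+c_3\zeta^{2k}$, $k=0,1,2$, where $\zeta=e^{2\pi i/3}$; the $k=0$ value is $1$ since $\sum c=1$. The affine map $c\mapsto c_1+c_2\zeta+c_3\zeta^2$ sends the vertices $e_1,e_2,e_3$ of $\Delta$ to $1,\zeta,\zeta^2$, hence sends $\Delta$ onto $\mathrm{conv}\{1,\zeta,\zeta^2\}=\Pi_3$; the $k=2$ values are the conjugates and also fill $\Pi_3$, which is conjugation-symmetric, while $1\in\Pi_3$. Therefore $\Lambda\big(\M(c;(123),(132))\big)=\Pi_3$.

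For $L(c)$, note it is symmetric, so all its eigenvalues are real; $\mathbf 1$ is an eigenvector with eigenvalue $1$, and since $\mathrm{tr}\,L(c)=1$ the remaining two eigenvalues sum to $0$, hence equal $\pm\mu(c)$ for some $\mu(c)\ge 0$. A routine computation of the characteristic polynomial shows it factors as $(\lambda-1)\big(\lambda^2-(1-3\sigma)\big)$ with $\sigma:=c_1c_2+c_2c_3+c_3c_1$, so $\mu(c)=\sqrt{1-3\sigma}$. On $\Delta$ one has $\sigma=\tfrac12\big(1-\sum c_i^2\big)$, and $\sum c_i^2$ ranges continuously over $[\tfrac13,1]$, so $\sigma$ ranges over $[0,\tfrac13]$ and $\mu(c)$ over $[0,1]$ (e.g.\ $\mu=0$ at $c=(\tfrac13,\tfrac13,\tfrac13)$ and $\mu=1$ at $c=e_1$). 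Hence $\Lambda\big(\M(c;(132),(123))\big)=[-1,1]=\Pi_2$, and combining the two classes yields $\Lambda(\pds_3)=\Pi_2\cup\Pi_3$.

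There is no serious obstacle; the only points deserving care are the ``no more than'' directions: that $L(c)$ never contributes a non-real eigenvalue (immediate from symmetry) and that the two one-parameter families genuinely sweep out all of $[-1,1]$ and all of $\Pi_3$ (surjectivity by continuity plus convexity of $\Delta$, with nothing escaping because double stochasticity bounds the spectral radius by $1$). It is worth remarking, consistently with $\omega_3=\Pi_2\cup\Pi_3$, that both pieces are needed: $\Pi_2\setminus\Pi_3$ is the segment from $-1$ to $-\tfrac12$ and $\Pi_3\setminus\Pi_2$ is nonempty, so neither class alone suffices.
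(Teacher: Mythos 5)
Your proof is correct and rests on the same skeleton as the paper's: both decompose $\pds_3$ into the two cogredient classes of Theorem~\ref{thm1}, and both identify the eigenvalues of the circulant class $[\M(c;(123),(132))]$ as exactly $\Pi_3$ via the circulant eigenvalue formula. The difference is in how the symmetric class $[\M(c;(132),(123))]$ is handled. The paper does not compute that class's full eigenvalue region; it exhibits only the one-parameter subfamily $A_x$ with generic vector $(0,x,1-x)$, whose eigenvalues $\pm\sqrt{1-3x+3x^2}$ sweep $[-1,-\tfrac{1}{2}]\cup[\tfrac{1}{2},1]$ by the intermediate value theorem, and then closes the argument by combining this with $\Pi_3$ and with the known identity $\omega_3=\Pi_2\cup\Pi_3$, which is what supplies the reverse containment $\Lambda(\pds_3)\subseteq\Pi_2\cup\Pi_3$. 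You instead determine the entire spectrum of the symmetric class symbolically as $\{1,\pm\sqrt{1-3\sigma}\}$ with $\sigma=c_1c_2+c_2c_3+c_3c_1$ ranging continuously over $[0,\tfrac{1}{3}]$, so that this class contributes exactly $[-1,1]=\Pi_2$; your computation of $\sigma$ and of the factorization $(\lambda-1)\bigl(\lambda^2-(1-3\sigma)\bigr)$ checks out (the paper's $A_x$ is the special case $c_1=0$, where $\sigma=x-x^2$). What your route buys is self-containment: both inclusions follow without appealing to the Perfect--Mirsky value of $\omega_3$, at the cost of a slightly longer symbolic computation.
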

\noindent\pf
It is easy to see that, the matrices  in $[\M(c;(123),(132))]$ are non negative circulant. Thus, the set of eigenvalues of the matrices in  $[\M(c;(123),(132))]$ equals to $$\left\{c_1 + c_2 \xi_k + c_3 \xi_k^2 :  \xi_k = \exp(i\frac{2\pi k}{3}),~ k=0,1,2,~\mbox{and}~  c_i \geq 0, \displaystyle \sum_{i=1}^{3}{c_i} =1\right \}=\Pi_3.$$

  Now, define
  $$
    A_x = \bmatrix{
   0 & x & 1-x \\
  x & 1-x & 0\\
  1-x & 0 & x}
    ,0\leq x \leq 1.
    $$

    Then, $A_x \in [\M(c;(132),(123))]$ and  the eigenvalues of $A_x$  are $ 1, \sqrt{1-3x + 3x^2}$ and $ -\sqrt{1 - 3x + 3x^2}.$ Now, $f(x)=\sqrt{1-3x + 3x^2}$ is a continuous function on the interval  $[\frac{1}{2},1]$.

     It is easy to see that, $f(\frac{1}{2})=\frac{1}{2}$ and $f(1)=1$. Thus, by the intermediate value theorem,  $f$ assumes every values in the interval $[ \frac{1}{2}, 1].$
  Hence $\left[-1, -\frac{1}{2}\right] \cup \left[\frac{1}{2}, 1\right] \subseteq \Lambda(\pds_3)$.
  Thus $ \Lambda(\pds_3) = \Pi_2 \cup \Pi_3=\omega_3. \hfill{\square}$

\subsection{$\pds$ matrices of order $4$}
In this section we determine $\Lambda(\pds_4)=\bigcup_{i=1}^{37} \Lambda(\C_i).$ As mentioned in the introduction that $\Lambda(\C_i)\subseteq \R$ for some $i,$ and for the rest of the classes, exactly two (symbolic) eigenvalues (conjugate pair) are non-real. We derive analytical expression of eigenvalues of $C_i$ when $\Lambda(\C_i)\subseteq \R.$ However for non-real eigenvalues the analytic expression is hard to find for some classes. 
%\textcolor{red}{In those cases we use MATLAB R$2017$a to compute the eigenvalues by choosing different values of the entries of the generic matrix which defines such a class.}

We recall the Perron Frobenius theorem for nonnegative matrices that will be used in sequel \cite{Minc1988}.  \begin{theorem}[Perron-Frobenius Theorem] \label{PF}
  If $A$ is a nonnegative matrix then it has a real nonnegative eigenvalue, say $r$, which is greater than or equal to the modulus of rest of the eigenvalues. Again eigenvector of $A$ corresponding to $r$ is nonnegative. If $A$ is irreducible then $r$ is positive and simple having eigenvector entries positive.
  \end{theorem}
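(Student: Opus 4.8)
Since this is a classical statement (quoted here from \cite{Minc1988}), the plan is to recall a standard argument organized in three stages: the strictly positive case, the general nonnegative case obtained by perturbation, and the sharpening under irreducibility. In the first stage I would assume $A>0$ entrywise and work with the Collatz--Wielandt functional $g(x)=\max\{s\ge 0 : Ax\ge sx\ \text{componentwise}\}$, defined for nonzero $x\ge 0$ and scale invariant. One checks that $g$ is upper semicontinuous, so it attains a maximum value $r$ on the compact simplex $\Sigma=\{x\in\R^n : x\ge 0,\ \sum x=1\}$, say at $v\in\Sigma$. The key point is that the maximizer must be an eigenvector: if $Av-rv\ge 0$ were nonzero, then applying $A>0$ would give $A(Av)>r\,Av$ componentwise, and since $Av>0$ this would let us strictly increase $g$, contradicting maximality; hence $Av=rv$, and then $v=r^{-1}Av>0$ and $r>0$. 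Finally, if $Ay=\lambda y$ with $y\ne 0$, the triangle inequality gives $|\lambda|\,|y|\le A|y|$ componentwise, so $|\lambda|\le g(|y|)\le r$.

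For general $A\ge 0$ I would set $A_\varepsilon=A+\varepsilon J>0$ and invoke the first stage: each $A_\varepsilon$ has a Perron root $r_\varepsilon>0$ with eigenvector $v_\varepsilon\in\Sigma$. The numbers $r_\varepsilon$ stay bounded (by the largest row sum of $A_\varepsilon$, which is bounded as $\varepsilon\to 0$) and $\Sigma$ is compact, so along a sequence $\varepsilon_k\downarrow 0$ one gets $r_{\varepsilon_k}\to r\ge 0$ and $v_{\varepsilon_k}\to v\in\Sigma$; passing to the limit in $A_{\varepsilon_k}v_{\varepsilon_k}=r_{\varepsilon_k}v_{\varepsilon_k}$ yields $Av=rv$ with $v\ge 0$ nonzero. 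For the modulus bound, $Ay=\lambda y$ gives $|\lambda|\,|y|\le A|y|\le A_\varepsilon|y|$ componentwise, so $|\lambda|\le r_\varepsilon$ for every $\varepsilon$, hence $|\lambda|\le r$.

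Under irreducibility (with $n\ge 2$) the extra information is extracted from the fact that $(I+A)^{n-1}>0$: its $(i,j)$ entry is positive exactly when the digraph of $A$ admits a walk of length at most $n-1$ from $i$ to $j$, which is guaranteed by strong connectivity. Applying this to the Perron pair from the previous stage gives $(1+r)^{n-1}v=(I+A)^{n-1}v>0$, so $v>0$; and since an irreducible $A$ of order $\ge 2$ has no zero row, $Av\ne 0$, forcing $r>0$. For geometric simplicity, if there were a real $r$-eigenvector $w$ independent of $v$, one could choose a scalar $t$ so that $z=v-tw\ge 0$ has a zero coordinate but $z\ne 0$; then $z$ would be another nonnegative $r$-eigenvector, so $z=(1+r)^{-(n-1)}(I+A)^{n-1}z>0$, a contradiction. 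Algebraic simplicity follows by applying everything above to $A^T$ to obtain a strictly positive left Perron vector $u$; a generalized eigenvector $w$ with $(A-rI)w$ equal to a nonzero multiple of $v$ would force $0=u^T(A-rI)w$ to be a positive multiple of $u^Tv>0$, which is impossible.

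The only genuinely delicate step is the last one, the passage from geometric to algebraic simplicity of $r$ in the irreducible case; everything else is a compactness argument together with entrywise estimates. In the paper itself one of course only cites \cite{Minc1988} and uses the statement as a black box in the sequel.
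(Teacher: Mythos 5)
The paper does not prove this statement at all: it is quoted verbatim as a classical result from \cite{Minc1988} and used as a black box in the sequel, so there is no in-paper argument to compare yours against. Your sketch is a correct rendition of the standard proof --- Collatz--Wielandt maximization for $A>0$, perturbation by $\varepsilon J$ plus compactness for general $A\ge 0$, and the positivity of $(I+A)^{n-1}$ together with a left Perron vector to get positivity, simplicity, and algebraic simplicity in the irreducible case --- and I see no gaps in it beyond the level of detail one expects in a sketch.
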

For doubly stochastic matrix $r$ is $1$ and $(1, 1,...,1)^{T}$ is the corresponding positive eigenvector.

\subsubsection{ Real spectrum}

Note that the classes $\C_5=\bf{[\M(c;(12)(34),(13)(24),(14)(23))]}$ and $\C_6=\bf{[\M(c;(12)(34),(1423),(1324))]}$ contain symmetric matrices. Thus we have the following.
 %In the next theorem we show that the real line segment $[-1,1]$ is always contained in $\Lambda(PDS_4)$.
 \begin{theorem}
 $\Lambda(\C_5 \cup \C_6)=[-1,1].$
\end{theorem}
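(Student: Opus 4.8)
The plan is to show both inclusions. For the inclusion $\Lambda(\C_5 \cup \C_6)\subseteq[-1,1]$, I would use the remark preceding the statement: both $\C_5$ and $\C_6$ consist of symmetric matrices (the generic matrices representing $\M(c;(12)(34),(13)(24),(14)(23))$ and $\M(c;(12)(34),(1423),(1324))$ are visibly symmetric, since the permutations involved are all involutions, and symmetry is preserved under the cogredient action $X^TAX$). A real symmetric matrix has only real eigenvalues, so $\Lambda(\C_5 \cup \C_6)\subseteq\R$. Since every matrix in these classes is doubly stochastic, it is in particular row-stochastic and nonnegative, so by the Perron--Frobenius theorem (Theorem \ref{PF}) every eigenvalue has modulus at most $1$. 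Combining, $\Lambda(\C_5\cup\C_6)\subseteq[-1,1]$.

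For the reverse inclusion $[-1,1]\subseteq\Lambda(\C_5\cup\C_6)$, I would exhibit an explicit one-parameter family inside one of the classes whose eigenvalues sweep out $[-1,1]$. A natural candidate is a family in $\C_6$ (or $\C_5$) with a symmetric pattern such as
$$
B_x=\bmatrix{0 & x & 0 & 1-x \\ x & 0 & 1-x & 0 \\ 0 & 1-x & 0 & x \\ 1-x & 0 & x & 0},\qquad 0\le x\le 1,
$$
whose eigenvalues are $\{1,-1,\,2x-1,\,1-2x\}$ (a quick computation: $B_x$ is symmetric, doubly stochastic, and block-structured, giving the pair $\pm 1$ and the pair $\pm(2x-1)$). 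Letting $x$ range over $[0,1]$ the quantity $2x-1$ continuously covers $[-1,1]$, so every point of $[-1,1]$ arises as an eigenvalue of some $B_x$. I would double-check that $B_x$ actually lies in $\C_6$ by matching its entry pattern (up to relabeling of the symbols $c_i$ and a permutation similarity) with the generic matrix representing $\bf{[\M(c;(12)(34),(1423),(1324))]}$ listed in the Appendix; if it instead matches $\C_5$, the argument is unchanged. This gives $[-1,1]\subseteq\Lambda(\C_6)\subseteq\Lambda(\C_5\cup\C_6)$, and together with the first inclusion we conclude $\Lambda(\C_5\cup\C_6)=[-1,1]$.

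The only delicate point is the bookkeeping in the second half: one must verify that the chosen parametric family genuinely belongs to $\C_5$ or $\C_6$ rather than to some other cogredient class, and that its eigenvalues are exactly what is claimed. Both are routine — the symmetry and double-stochasticity are immediate, the eigenvalue computation is a $4\times4$ determinant that factors because of the bipartite-type block structure, and membership in the class is settled by comparing with the explicit generic matrices in the Appendix (using that the symbols $c_i$ may be specialized and permuted, and that cogredience allows a further permutation similarity). I do not anticipate a genuine obstacle, only careful matching of patterns.
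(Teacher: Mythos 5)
Your proposal is correct and follows essentially the same route as the paper: symmetry of the matrices in $\C_5\cup\C_6$ (plus double stochasticity) gives $\Lambda(\C_5\cup\C_6)\subseteq[-1,1]$, and an explicit one-parameter symmetric family whose eigenvalues sweep $[-1,1]$ gives the reverse inclusion. The only cosmetic difference is the choice of family: the paper sets $c_3=c_4=0$ in the generic matrix of $\C_5$ to get a block-diagonal family with eigenvalues $\{1,c_1-c_2,1,c_1-c_2\}$, whereas your $B_x$ is the same generic matrix with $c=(0,x,0,1-x)$, whose spectrum $\{1,-1,2x-1,1-2x\}$ you state correctly; $B_x$ does indeed lie in $\C_5$.
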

\noindent \pf
Since, the matrices in $\C_5$ and $\C_6$ are symmetric, so the eigenvalues of $\C_5$ and $\C_6$  are in  $[-1, 1].$ Now, let us consider the following sub collection of matrices of  $\C_5 \cup \C_6,$
 $$K=
\left\{
\bmatrix
{c_1 & c_2 & 0 & 0 \\
c_2 & c_1 & 0 & 0\\
0 & 0 & c_1 & c_2 \\
0 & 0 & c_2 & c_1}:
 c_1,c_2 \geq 0~\mathrm{and}~c_1+c_2=1
\right\}.
$$
Then $ \Lambda(K)=\left\{c_1 + c_2, c_1 -c_2, c_1 + c_2, c_1-c_2: c_1,c_2 \geq 0~\mathrm{and}~c_1+c_2=1\right\}$. Since, $ c_1 -c_2$ is an eigenvalue, which is a convex combination of $1$ and $-1$, the desired result follows. $\hfill{\square}$

In Table \ref{table:eig} we list classes $\C_i$ of $\pds_4$ such that $\Lambda(\C_i) \subseteq \R$ and $\Lambda(\C_i)$ can be determined easily from the associated characteristic polynomial. We also describe $\Lambda(\C_i)$ in terms of entries of the generic matrices which define $\C_i.$
%The triplet $(P,Q,R)$ defines a class $\C_i.$
\begin{table}[H]\small
  \begin{center}
            \begin{tabular}{|c|c|c|c|}
    \hline
     {\textbf{Class}} &{$(P,Q,R)$} & $c$ &  {$\Lambda(\M(c;P,Q,R))$}\\
         \hline
      \hline
     {$\C_7$} &{$(I_4,(34),(34))$} & $(\frac{1}{4},\frac{1}{4},c_3,\frac{1}{2}-c_3)$ & \{1,0,0,0\} \\
      \hline \hline
       {$\C_8$} & {$((34),(34),I_4)$} &  $(\frac{1}{4},\frac{1}{4},c_3,\frac{1}{2}-c_3)$ & $\left\{1,\frac{1}{2}-2c_3,0,0:0 \leq c_3 \leq \frac{1}{2}\right\}$ \\
      \hline \hline
{$\C_{10}$} & {$((34),(24),(243))$} &  $(\frac{1}{4},\frac{1}{4},c_3,\frac{1}{2}-c_3)$ &  $\{1,0,2c_3-\frac{1}{2},\frac{1}{4}-c_3:0\leq c_3 \leq \frac{1}{2}\}$\\
      \hline \hline
$\C_{12}$ & $((243),(34),(24))$  &  $(\frac{1}{4},\frac{1}{4},c_3,\frac{1}{2}-c_3)$& $\left\{1, 0,\pm \frac{1}{2\sqrt2}(4 c_3 -1):0\leq c_3 \leq \frac{1}{2}\right\}$\\
      \hline \hline
$\C_{13}$ &$((24),(243),(34))$ &  $(\frac{1}{4},\frac{1}{4},c_3,\frac{1}{2}-c_3)$ &  $\{1,0,\frac{1}{2}-2 c_3,c_3-\frac{1}{4}:0\leq c_3 \leq \frac{1}{2}\}$\\
      \hline \hline
 $\C_{14}$ &  $((243),(24),(34))$ &  $(\frac{1}{4},\frac{1}{4},c_3,\frac{1}{2}-c_3)$ & $\{1,0,c_3-\frac{1}{4},2 c_3-\frac{1}{2}:0\leq c_3 \leq \frac{1}{2}\}$ \\
\hline \hline
$\C_{18}$ &  $((1234),(14),(124))$ &  $(1-3c_4,3c_4-\frac{1}{2},\frac{1}{2}-c_4,c_4)$ & $\left\{1, 1-4 c_4,\pm \frac{1}{2} \sqrt 6 (1-4c_4):\frac{1}{6}\leq c_4 \leq \frac{1}{3}\right\}$ \\
\hline \hline
$\C_{22}$ & $((14),(1234),(124))$ &  $ (1-3c_4,3c_4-\frac{1}{2},\frac{1}{2}-c_4,c_4)$ & $ \left\{ 1, 6 c_4-\frac{3}{2}, 1-4 c_4, 4 c_4-1:\frac{1}{6}\leq c_4 \leq \frac{1}{3} \right\}$\\
\hline \hline
$\C_{24}$ & $((24),(234),(14))$  &  $(c_1,3c_1-\frac{1}{2},\frac{1}{2}-c_1,1-3c_1)$ & $\left\{1, \frac{3}{2}-6c_1,4 c_1-1, 4 c_1-1:\frac{1}{6}\leq c_1 \leq \frac{1}{3}\right\}$ \\
\hline \hline
$\C_{27}$ & $(I_4,(12)(34),(12)(34))$&  $(c_1,\frac{1}{2}-c_1,c_3,\frac{1}{2}-c_3)$ &$\{1,0,0,0\}$  \\
\hline \hline
$\C_{28}$ & $((12)(34),(12)(34),I_4)$ &  $(c_1,\frac{1}{2}-c_1,c_3,\frac{1}{2}-c_3)$ & $\{1,2c_1-2c_3,0,0 : 0\leq c_1,c_3 \leq \frac{1}{2}\}$ \\
\hline \hline
$\C_{29}$ &((34),(12),(12)(34)) &  $(c_1,\frac{1}{2}-c_1,c_3,\frac{1}{2}-c_3)$ & $\left\{1, 0, 0,2c_3-\frac{1}{2}:0 \leq c_3 \leq \frac{1}{2}\right\}.$ \\
\hline \hline
$\C_{35}$ & $((23),(1243),(13)(24))$ &  $(c_1,c_2,\frac{1}{2}-c_1,\frac{1}{2}-c_2)$ & $\left\{1,0,\pm \sqrt2 \left(c_1+c_2-\frac{1}{2}\right): 0 \leq c_1,c_2 \leq \frac{1}{2}\right\}$\\
\hline \hline
$\C_{36}$ & $(1342),(12)(34),(23))$ &  $(c_1,\frac{1}{2}-c_1,c_3,\frac{1}{2}-c_3)$ & $\left\{1,0,2(c_1-c_3),\frac{1}{2}-c_1-c_3: 0 \leq c_1,c_3 \leq \frac{1}{2}\right\}$\\
\hline
          \end{tabular}
\caption {The triplet $(P,Q,R)$ in the second column represent the class $\C_i=\bf{[\M(c;P,Q,R)]}.$ The entries of $c$ are specific due to doubly stochastic constraint on $\M(c;P,Q,R).$ The fourth column exhibits the eigenvalue region of $\C_i.$}\label{table:eig}
  \end{center}
\end{table}

%\subsubsection{Complex spectrum}
%\textcolor{blue}{Here we determine the explicit form of the matrices in PDS$_4$ except the matrices in class $\C_3,$ which is from the $\T_1$ type of generic matrices and classes $\C_{15},\C_{16},\C_{20},\C_{23},\C_{25},$ which are from the $\T_3$ type of generic matrices. }

Now we consider the class $\C_{33}=\bf{[\M(c;(132),(12)(34),(234))]}.$ Then the spectrum of the generic matrix which represents $\C_{33}$ is given by \begin{align*}
\Lambda(\M(c;(132),(12)(34),(234)))=&\left\{1,0,\frac{1}{2}(c_1+c_3)-\frac{1}{4}+\frac{1}{2}\sqrt{9 c_1^2+9c_3^2-14 c_1 c_3-c_1-c_3+\frac{1}{4}},\right.\\
  &\left.\frac{1}{2}(c_1+c_3)-\frac{1}{4}-\frac{1}{2}\sqrt{9 c_1^2+9c_3^2-14 c_1 c_3-c_1-c_3+\frac{1}{4}}: 0 \leq c_1,c_3 \leq \frac{1}{2}\right\},
\end{align*}
where $c=(c_1,\frac{1}{2}-c_1,c_3,\frac{1}{2}-c_3).$ The eigenvalues can easily be found by deriving the factors of the associated characteristic polynomial.We show below that $\Lambda(\M(c;(132),(12)(34),(234))) \subseteq [-1, \, 1].$ Suppose $f(c_1,c_3)=9 c_1^2+9c_3^2-14 c_1 c_3-c_1-c_3+\frac{1}{4}$ defined over the rectangle $0 \leq c_1 \leq \frac{1}{2},0 \leq c_3 \leq \frac{1}{2}.$ Now,
$$\nabla f=[18 c_1-14 c_3-1,18 c_3 -14 c_1-1]$$ and the Hessian matrix
 $$H=\bmatrix{18 & -14\\ -14 & 18}.$$
 Solving for $\nabla f=0$ we have $c_1=\frac{1}{4}, c_3=\frac{1}{4}.$ As $det(H)>0$, and $\frac{\partial^2 f}{\partial c_1^2}>0,$ $f(c_1,c_3)$ has minimum value $0$ at $(\frac{1}{4},\frac{1}{4}).$ On both the lines $c_3=0$ and $c_1=0,$ $f(c_1,c_3)$ has a minimum value $\frac{2}{9}$ at $(\frac{1}{18},0)$ and at $(0,\frac{1}{18})$ respectively. Computing the values at the corner points of the rectangle $0 \leq c_1, c_2 \leq \frac{1}{2},$ we have $f(0,0)=\frac{1}{4}=f(\frac{1}{2},\frac{1}{2}),f(\frac{1}{2},0)=2=f(0,\frac{1}{2}).$ Hence $2$ and $0$ are the global maximum and global minimum values of $f$ respectively. Hence all the eigenvalues of the class $\C_{33}$ are real.

\subsubsection{Complex spectrum of matrices described in Theorem \ref{thm:norepeat}}
  First we give the eigenvalue region of $\C_1.$ We also describe $\Lambda(\C_2)$ and $\Lambda(\C_4).$ Explicit expression of eigenvalues of $\C_3$ is cumbersome.
% \textcolor{red}{ though can be computed in computer algebra systems like Maple.} However we plot eigenvalues of several matrices in $\C_3$ using MATLAB R$2017a$.}
\begin{itemize}
\item \noindent\textbf{Eigenvalue region for the matrix class $\C_1$:}

  Recall that $\C_1=\bf[\M(c;(12)(34),(1324),(1423)]$ is comprised of the generic matrices $A,B,C$ given in (\ref{Thm4.1:eq1}). Clearly $C$ is a symbolic DS circulant matrix. Hence $\Lambda(\C_1)$ is the convex hull of $\{1, -1, i, -i\}$, which is same as $\Pi_4.$
 %For example, let  $A \in \M(c,(12)(34),(1324),(1423))$ with first row $c=(c_1, c_2, c_3,c_4)$,
 %then  the characteristic polynomial of $A$ is given by:
 %\begin{eqnarray*}
%\mathcal{P}_1(x) & =&  x^4 -4c_1 x^3 + (6c_1^2 - 2c_2^2 - 4c_3 c_4) x^2 + ( - 4c_1^3 + 4c_1 c_2^2 + 8 c_1 c_3 c_4 - 4 c_2 c_3^2 - 4 c_2 c_4^2) x  \\
%& & +(c_1^4 - 2 c_1^2 c_2^2- 4 c_1^2 c_3 c_4 + 4 c_1 c_2 c_3^2 + 4 c_1 c_2 c_4^2 + c_2^4 - 4 c_2^2 c_3 c_4 - c_3^4 + 2 c_3^2 c_4^2 - c_4^4),
% \end{eqnarray*}
 %then
 %$\Lambda(A)=\{c_1 + c_2 + c_3 + c_4, c_1 + c_2 - c_3 - c_4, c_1 - c_2 - c_3 i + c_4 i,  c_1 - c_2 + c_3 i - c_4 i \},$ where $c_1, c_2, c_3, c_4$ are nonnegative and sum up to $1.$

\item \noindent\textbf{Eigenvalue region for the matrix class $\C_2$:}

Recall that $\C_2=\bf{[\M(c;(12)(34),(14)(23),(13)(24))]}.$
 If $A \in \M(c;(12)(34),(14)(23),(13)(24))$ with first row $c=(c_1, c_2, c_3,c_4)$, then
 %the characteristic polynomial of $A$ is given by:
% \begin{eqnarray*}
% \mathcal{P}_2(x) &  = & x^4 + (- 2 c_1 - 2 c_2)x^3 +( 4 c_1 c_2 - 4 c_3 c_4)x^2 +( 2 c_1^3 - 2 c_1^2 c_2 - 2 c_1 c_2^2 - 2 c_1 c_3^2 + 4 c_1 c_3 c_4\\
% & &- 2 c_1 c_4^2 + 2 c_2^3 - 2 c_2 c_3^2 + 4 c_2 c_3 c_4 - 2 c_2 c_4^2) x + (- c_1^4 + 2 c_1^2 c_2^2 + 2 c_1^2 c_3^2 + 2 c_1^2 c_4^2 \\
%& &- 8 c_1 c_2 c_3 c_4 - c_2^4 + 2 c_2^2 c_3^2 + 2 c_2^2 c_4^2 - c_3^4 + 2 c_3^2 c_4^2 - c_4^4),
% \end{eqnarray*}
 %and
 \begin{eqnarray*}
 \Lambda(A)=
 & \left\{c_1 + c_2 + c_3 + c_4, c_1 + c_2 - c_3 - c_4, {((c_1 - c_2 + c_3  - c_4 )(c_1 - c_2 - c_3 + c_4))}^{\frac{1}{2}}, \right.  \\
 & \left.-{((c_1 - c_2 + c_3  - c_4 )(c_1 - c_2 - c_3 + c_4))}^{\frac{1}{2}}\right\}.
 \end{eqnarray*}
 As $c_1, c_2, c_3, c_4$ are nonnegative and sum up to $1$, so all the eigenvalues of the matrices in $\M(c;(12)(34),(14)(23),(13)(24))$  belong to  $[-1,1] \cup \{i t : t \in [-1, 1] \},$ and hence $\Lambda(\C_2)$ is contained in   $[-1,1] \cup \{i t : t \in [-1, 1] \}.$

\begin{figure}[H]
\centering
\subfigure[Eigenvalues of the generic matrix in $\C_1$]{\includegraphics[height=6 cm,width=6.5 cm]{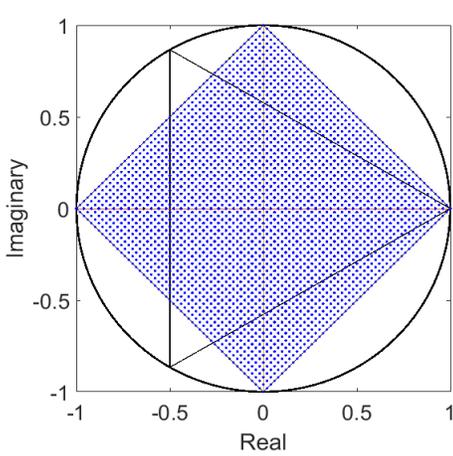}}
\hspace{0.4cm}
\subfigure[Eigenvalues of the generic matrix in $\C_2$]{\includegraphics[height=6 cm,width=6.5 cm]{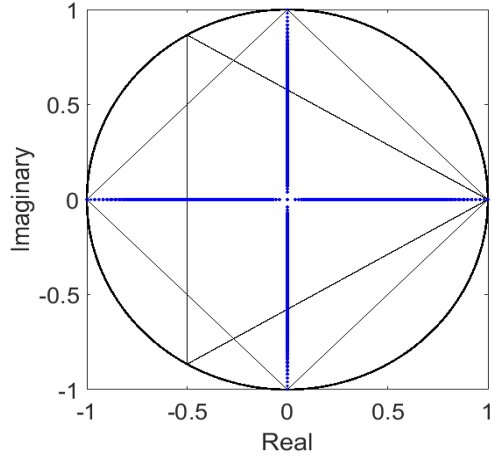}}
\caption{Setting several values of $0\leq c_i\leq 1$ such that $\sum_{i=1}^4 c_i=1,$ eigenvalues of generic matrices corresponding to $\C_1, \C_2$ are plotted in blue. }
\end{figure}

\item\noindent\textbf{Eigenvalue region for the matrix class $\C_3$:}

Recall that $\C_3=\bf{[\M(c;(1423),(1234),(13)(24))]}.$
If $A$ is the generic matrix which  represets $\M(c;(1423),(1234),(13)(24))$ with first row $c=(c_1, c_2, c_3,c_4)$, then  the characteristic polynomial of $A$ is given by
\begin{eqnarray*}
\mathcal{P}_3(x) &  = & x^4 + (- c_1 - 2 c_2 - c_3)x^3 + ( 2 c_1 c_2 - 2 c_1 c_4 + 2 c_2 c_3 - 2 c_3 c_4)x^2 +(- c_1^3 + c_1^2 c_3 + 2 c_1^2 c_4 \\
 & & - c_1 c_2^2- 4 c_1 c_2 c_3 + 2 c_1 c_2 c_4 + c_1 c_3^2 - c_1 c_4^2 + 2 c_2^3 - c_2^2 c_3 + 2 c_2 c_3 c_4 - 2 c_2 c_4^2 - c_3^3 + 2 c_3^2 c_4  \\
 & & - c_3 c_4^2)x+ ( c_1^4 - 4 c_1^2 c_2 c_4 - 2 c_1^2 c_3^2 + 4 c_1 c_2^2 c_3 + 4 c_1 c_3 c_4^2 - c_2^4 + 2 c_2^2 c_4^2 - 4 c_2 c_3^2 c_4 + c_3^4 - c_4^4).
 \end{eqnarray*}
  Let $\{1, \lambda, \sigma + \tau i, \sigma - \tau i\}$ be the spectrum of any matrix from $\M(c;(1423),(1234),(13)(24)).$
 Then we have $c_1 + 2 c_2 + c_3=1+\lambda+2 \sigma$ and $2 c_1 c_2 - 2 c_1 c_4 + 2 c_2 c_3 - 2 c_3 c_4=\lambda+2 \sigma+2 \sigma \lambda+\sigma^2+\tau^2 $ which implies that,
 \begin{align*}
   & c_2-c_4=\lambda+2 \sigma ~ \mathrm{and} \\
   & 2(c_1+c_3)(c_2-c_4)=\lambda+2 \sigma+2 \sigma \lambda+\sigma^2+\tau^2
  \end{align*}
respectively.
  By solving these two equations we obtain
  %$c_2=\frac{1}{2}\lambda+\frac{7 \sigma^2+2 \lambda \sigma+2 \sigma+\lambda-\tau^2}{4(\lambda+2 \sigma)},$ $c_4=-2 \sigma -\frac{1}{2}\lambda+\frac{7 \sigma^2+2 \lambda \sigma+2 \sigma+\lambda-\tau^2}{4(\lambda+2 \sigma)}$ and $c_1,c_3$ are given by the relation $c_1+c_3=\frac{1}{2}+\frac{2 \sigma \lambda+\sigma^2+\tau^2}{2(\lambda+ \sigma)}.$ Again we have
  $\tau=\pm \sqrt{3 \sigma^2+(c_2-c_4)(2c_1+2c_3-2\sigma-1)}$ for $-1\leq \sigma\leq 1$ for which $\tau$ is real, and $\lambda=c_2-c_4-2\sigma.$
 % We do not know about the  exact region for the eigenvalues of the matrices in the class $\C_3$.

\item\noindent\textbf{Eigenvalue region for the matrix class $\C_4$:}

Recall that $\C_4=\bf{[\M(c;(13)(24),(14)(23),(12)(34))]}.$
 If $A$ represents $\M(c;(13)(24),(14)(23),(12)(34))$ with first row $(c_1, c_2, c_3,c_4)$, then  the characteristic polynomial of $A$ is given by:
\begin{eqnarray*}
\mathcal{P}_4(x) & = & x^4 - (c_1 + c_2 + c_3 + c_4) x^3 + 0 x^2 + ( - c_1^3 + c_1^2 c_2 + c_1^2 c_3 + c_1^2 c_4 + c_1 c_2^2 - 2 c_1 c_2 c_3 - 2 c_1 c_2 c_4 \\
& & +c_1 c_3^2 - 2 c_1 c_3 c_4 + c_1 c_4^2 - c_2^3 + c_2^2 c_3 + c_2^2 c_4 + c_2 c_3^2 - 2 c_2 c_3 c_4 + c_2 c_4^2 - c_3^3 + c_3^2 c_4 + c_3 c_4^2 - c_4^3)x \\
& & +( c_1^4 - 2 c_1^2 c_2^2 - 2 c_1^2 c_3^2 - 2 c_1^2 c_4^2 + 8 c_1 c_2 c_3 c_4 + c_2^4 - 2 c_2^2 c_3^2 - 2 c_2^2 c_4^2 + c_3^4 - 2 c_3^2 c_4^2 + c_4^4).
\end{eqnarray*}

In the next theorem, we establish the region $\Lambda(\C_4)$.
\begin{theorem}
 The  set of all eigenvalues of matrices in  $\C_4$ are of the form $$\left\{1, -2\sigma, \sigma + \sqrt 3 \sigma i, \sigma - \sqrt 3\sigma i\right\},$$ where $|\sigma| \leq \frac{1}{2}.$
 \end{theorem}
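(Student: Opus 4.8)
The plan is to prove that every matrix in $\C_4$ has characteristic polynomial $(x-1)(x^3-\det A)$, so that its three eigenvalues other than $1$ are exactly the cube roots of $\det A$; the stated normal form then follows at once, and the bound $|\sigma|\le\frac{1}{2}$ comes from Perron--Frobenius.

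First I would reduce to a single representative. By Definition \ref{def:cm}, every matrix in the cogredient class $\C_4$ is permutationally similar to the generic matrix $A$ representing $\M(c;(13)(24),(14)(23),(12)(34))$, where $c=(c_1,c_2,c_3,c_4)$ with $c_i\ge 0$ and $\sum c_i=1$; hence all such matrices share one spectrum and it suffices to analyse this $A$ as $c$ ranges over the simplex. Being doubly stochastic, $A$ is nonnegative with Perron root $1$ attained at $(1,\dots,1)^{T}$ (Theorem \ref{PF}), so $1\in\Lambda(A)$.

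Next I would extract structure from the characteristic polynomial $\mathcal{P}_4(x)$ displayed above: its $x^3$-coefficient is $-(c_1+c_2+c_3+c_4)=-1$, its $x^2$-coefficient is $0$, and $\mathcal{P}_4(0)=\det A$. Writing $\mathcal{P}_4(x)=(x-1)(x^3+\alpha x^2+\beta x+\gamma)$ and comparing the coefficients of $x^3$, $x^2$ and $x^0$ forces $\alpha=0$, $\beta=0$ and $\gamma=-\det A$; equivalently, the eigenvalues $\lambda_2,\lambda_3,\lambda_4$ of $A$ other than $1$ satisfy $\lambda_2+\lambda_3+\lambda_4=0$ and $\lambda_2\lambda_3+\lambda_2\lambda_4+\lambda_3\lambda_4=0$, so they are precisely the three roots of $x^3=\det A$. (One may instead divide $\mathcal{P}_4(x)$ by $x-1$ outright, or use the factorisation $\det A=(c_1+c_2+c_3+c_4)(c_1+c_2-c_3-c_4)(c_1-c_2+c_3-c_4)(c_1-c_2-c_3+c_4)$, which also makes the next step explicit.) Since $\det A\in\R$, its cube roots are $\mu,\ \mu\omega,\ \mu\bar{\omega}$, where $\mu=(\det A)^{1/3}\in\R$ is the real cube root and $\omega=e^{2\pi i/3}=-\frac{1}{2}+\frac{\sqrt{3}}{2}i$; setting $\sigma:=-\mu/2$ turns these three numbers into $-2\sigma$, $\sigma+\sqrt{3}\,\sigma i$ and $\sigma-\sqrt{3}\,\sigma i$, which is the asserted form of $\Lambda(A)$. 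Finally, because $A$ is doubly stochastic, every eigenvalue of $A$ --- in particular $\mu$ --- has modulus at most $1$ by Perron--Frobenius, so $|2\sigma|=|\mu|\le 1$, i.e. $|\sigma|\le\frac{1}{2}$.

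The one step that needs care is the extraction of the cubic factor $x^3-\det A$: it rests entirely on the vanishing of the $x^2$-coefficient of $\mathcal{P}_4$, which, together with $\mathrm{tr}\,A=1$ and $1\in\Lambda(A)$, is exactly what annihilates both the quadratic and the linear coefficient of the cofactor cubic. After that, identifying the cube roots of a real number, performing the substitution $\sigma=-\mu/2$, and invoking the Perron--Frobenius bound are all routine.
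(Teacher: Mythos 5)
Your proof is correct. It rests on the same two facts that drive the paper's argument --- the $x^3$-coefficient of $\mathcal{P}_4$ equals $-\mathrm{tr}\,A=-1$ and the $x^2$-coefficient vanishes --- together with the Perron--Frobenius bound, but you package them differently. The paper posits the spectrum in the form $\{1,\lambda,\sigma+\tau i,\sigma-\tau i\}$ and solves the two resulting symmetric-function equations $1+\lambda+2\sigma=1$ and $\lambda+2\sigma+2\sigma\lambda+\sigma^2+\tau^2=0$ to obtain $\lambda=-2\sigma$ and $\tau^2=3\sigma^2$; you instead divide out $x-1$ and observe that the cofactor cubic is forced to be $x^3-\det A$, so the remaining eigenvalues are exactly the three cube roots of the real number $\det A$. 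Your route buys two small things. First, it does not presuppose that the non-Perron eigenvalues consist of one real number and a conjugate pair: that is a tacit assumption in the paper's parametrization (it silently excludes the a priori possibility of three distinct real eigenvalues besides $1$), whereas the cube roots of a real number automatically have the shape $\mu,\ \mu\omega,\ \mu\bar{\omega}$. Second, it identifies the spectrum concretely as $\{1\}$ together with the cube roots of $\det A$, where the Klein-four-group determinant identity gives $\det A=(c_1+c_2-c_3-c_4)(c_1-c_2+c_3-c_4)(c_1-c_2-c_3+c_4)$ once $\sum c_i=1$; this agrees with the constant term of $\mathcal{P}_4$ displayed in the paper and makes the dependence of $\sigma$ on $c$ transparent. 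The closing steps --- the substitution $\sigma=-\mu/2$ and the bound $|\mu|\le 1$ from Perron--Frobenius, hence $|\sigma|\le\frac{1}{2}$ --- coincide with the paper's.
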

\noindent \pf
  Let $\{1, \lambda, \sigma + \tau i, \sigma - \tau i\}$ be the spectrum of any matrix from $\M(c;(13)(24),(14)(23),(12)(34)).$ Since,  all matrices in $\C_4$ have trace  $1$, we have
\begin{eqnarray} \label{sub1}
1+ \lambda + 2\sigma =1.
\end{eqnarray}
 Also, the coefficient of $x^2$ equals to zero in $\mathcal{P}_4(x)$, so
\begin{eqnarray} \label{sub2}
\lambda + 2\sigma + 2\sigma\lambda + \sigma^2 + \tau^2=0.
\end{eqnarray}

Solving $(\ref{sub1})$ and $(\ref{sub2})$, we get  $\lambda = -2 \sigma$ and $|\tau|=\sqrt3 \sigma$. By Theorem \ref{PF}, we have $|\sigma|\leq \frac{1}{2}.$ $\hfill{\square}$

\end{itemize}
%From above theorem, it is clear that the only real spectrum of the class $\C_4$ is $(1, 0, 0, 0).$

\begin{figure}[H]
\centering
\subfigure[Eigenvalues of the generic matrix in $\C_3$]{\includegraphics[height=6 cm,width=6.5 cm]{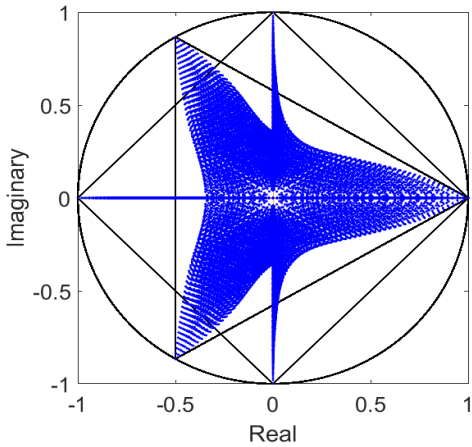}}
\hspace{0.4cm}
\subfigure[Eigenvalues of the generic matrix in $\C_4$]{\includegraphics[height=6 cm,width=6.5 cm]{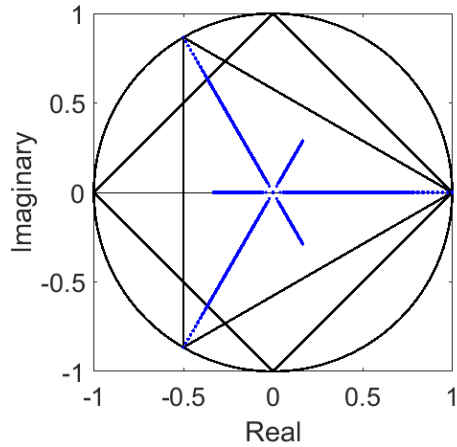}}
\caption{Setting several values of $0\leq c_i\leq 1$ such that $\sum_{i=1}^4 c_i=1,$ eigenvalues of generic matrices corresponding to $\C_3, \C_4$ are plotted in blue.}
\end{figure}

Next we consider the eigenvalue region of the matrices described in Theorem \ref{thm:repeat} having complex spectrum. In each of the following matrices the entries of $c$ have been specified due to the doubly stochastic nature of the concerned matrices.

\subsubsection{Complex spectrum of matrices of $\T_4$ type}
 Here $\C_9=\bf{[ \M(c;,(34),(243),(24))]}$ and $\C_{11}=\bf{[ \M(c;(24),(34),(243))]}$ are the two different cogredient matrix classes of type $\T_4.$
 If $A$ represents $\M(c;(34),(243),(24))$ where $c=(\frac{1}{4}, \frac{1}{4}, c_3,\frac{1}{2}-c_3)$, then it follows that
$$\Lambda(A)=\left\{1, 0, -\frac{1}{8}(4 c_3 -1)+i \frac{\sqrt 7}{8}(4 c_3-1),-\frac{1}{8}(4 c_3 -1)- \frac{\sqrt 7}{8}(4 c_3-1):0 \leq c_3 \leq \frac{1}{2}\right\}.$$
  Hence the maximum absolute value of real and imaginary part of the complex eigenvalues, among all matrices in $\C_9$  are  $\frac{1}{8}$ and $\frac{\sqrt 7}{8}$ respectively.

Now consider a generic matrix $B \in \M(c;(24),(34),(243))$ having $c=(\frac{1}{4}, \frac{1}{2}-c_3, c_3, \frac{1}{4})$ as first row. Hence
$$\Lambda(B)=\left\{1, 0, -\frac{1}{4}(4 c_3 -1)+i \frac{1}{4}(4 c_3-1),-\frac{1}{4}(4 c_3 -1)-i \frac{1}{4}(4 c_3-1):0 \leq c_3 \leq \frac{1}{2}\right\}.$$  Then it is clear that the maximum absolute value is $\frac{1}{4}$ for both the real and imaginary parts of the complex eigenvalues of $\C_{11}$.

\begin{figure}[H]
\centering
\includegraphics[height=6 cm,width=6.5 cm]{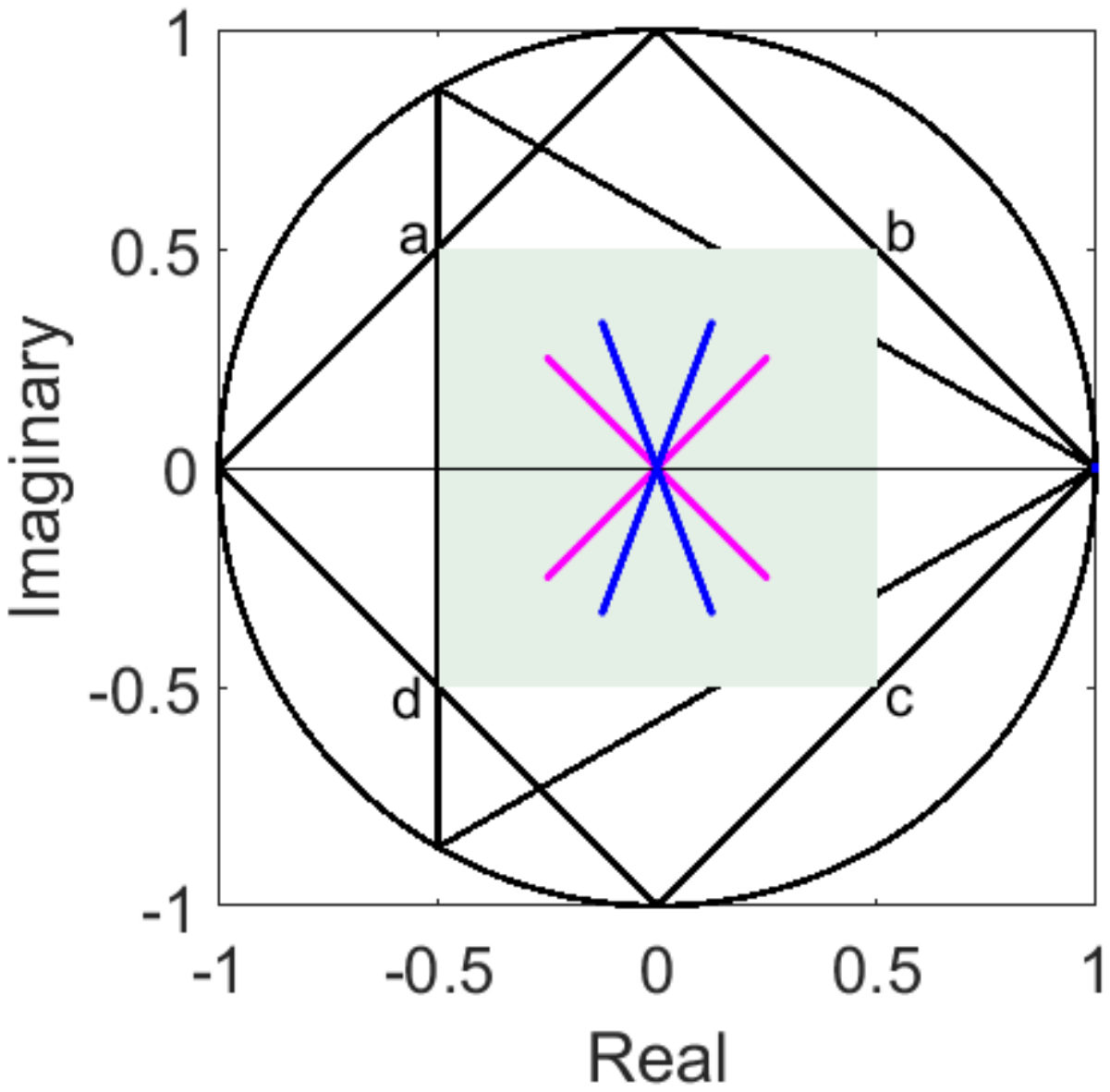}
\caption{Setting several values of $0\leq c_3\leq \frac{1}{2}$ eigenvalues of generic matrices corresponding to $\C_9, \C_{11}$ are plotted in blue and magenta respectively.}\label{fig:c9_11}
\end{figure}

\subsubsection{Complex spectrum of matrices of $\T_3$ type}
Eigenvalue regions of the classes $\C_{17}=\bf{[\M(c;(234),(24),(14)]},\C_{19}=\bf{[\M(c;(24),(34),(142))]},\C_{21}=\bf{[\M(c;(34),(1432),(243))]}$ and $\C_{26}=\bf{[\M(c;(1234),(124),(14))]}$ can be shown to be the following by directly computing the roots of the characteristic polynomials of the corresponding generic matrices.
\begin{itemize}
\item $\Lambda\left(\M(c;(234),(24),(14))\right)=\left\{1,4c_1-1,3(-\frac{1}{4}+\frac{\sqrt{15}}{12}i) (4 c_1-1), 3(-\frac{1}{4}-\frac{\sqrt{15}}{12}i) (4 c_1-1):\frac{1}{6}\leq c_1\leq \frac{1}{3}\right\},$ where $c=(c_1,3c_1-\frac{1}{2},\frac{1}{2}-c_1,1-3c_1).$

 \item $\Lambda(\M(c;(24),(34),(142)))=\left\{1,4c_1-1,\frac{1}{2}(1+\sqrt5i)(4c_1-1), \frac{1}{2}(1-\sqrt5i)(4c_1-1):\frac{1}{6}\leq c_1\leq \frac{1}{3}\right\},$
 where $c=(c_1,1-3c_1,\frac{1}{2}-c_1,3c_1-\frac{1}{2}).$

 \item $\Lambda(\M(c;(34),(1432),(243)))=\left\{1, \frac{3}{2}-6c_1, (4c_1-1)i, -(4c_1-1)i:\frac{1}{6}\leq c_1\leq \frac{1}{3}:\frac{1}{6}\leq c_1\leq \frac{1}{3}\right\},$ where $c=(c_1,1-3c_1,3c_1-\frac{1}{2},\frac{1}{2}-c_1).$

 \item $\Lambda(\M(c;(1234),(124),(14)))=\left\{1,1-4c_4,3(-\frac{1}{4}+\frac{\sqrt{15}}{12}i) (4 c_4-1), 3(-\frac{1}{4}-\frac{\sqrt{15}}{12}i) (4 c_4-1):\frac{1}{6}\leq c_4\leq \frac{1}{3}\right\},$ where $c=(1-3c_4,3c_4-\frac{1}{2},\frac{1}{2}-c_4,c_4).$
 \end{itemize}

The maximum modulus of real and imaginary part ([real,imaginary]) of  complex eigenvalues of  $\C_{17},\C_{19},\C_{21}$ and $\C_{26}$ are $[\frac{1}{4}, \frac{\sqrt{15}}{12}],[\frac{1}{6},\frac{\sqrt5}{6}],[\frac{1}{3},\frac{1}{3}]$ and $[\frac{1}{4}, \frac{\sqrt{15}}{12}]$ respectively.

  Recall that $\C_{15}=\bf{[\M(c;(34),(24),(142)]},\C_{16}=\bf{[\M(c;(34),(243),(1432))]},$
  $\C_{20}=\bf{[\M(c;(34),(142),(24))]},\C_{23}=\bf{[\M(c;(243),(34),(1432))]},$ and $\C_{25}=\bf{[\M(c;(234),(14),(24))]}.$

%\textcolor{blue}{
%It is to be noted that the matrix set $[\M(c,(34),(24),(142)]$ has extreme points $E_{21}=$
%$\begin{pmatrix}
%\frac{1}{3} & 0 & \frac{1}{6} & \frac{1}{2} \\
%\frac{1}{3} & 0 & \frac{1}{2} & \frac{1}{6} \\
%\frac{1}{3} & \frac{1}{2} & \frac{1}{6} & 0 \\
%0 & \frac{1}{2} & \frac{1}{6} & \frac{1}{3}
%\end{pmatrix}$ and $E_{22}=$
%$\begin{pmatrix}
%\frac{1}{6} &  \frac{1}{2} & \frac{1}{3} & 0 \\
%\frac{1}{6} &  \frac{1}{2} &0 & \frac{1}{3} \\
%\frac{1}{6} & 0 & \frac{1}{3} & \frac{1}{2} \\
% \frac{1}{2} & 0 & \frac{1}{3} & \frac{1}{6}
%\end{pmatrix}.$ Hence any matrix in $ [\M(c,(34),(24),(142)] $ is of the form $t E_{21}+(1-t) E_{22}$ for $0\leq t \leq 1.$}

\begin{itemize}
\item Characteristic polynomial of the generic matrix $A_1= \M(c;(34),(24),(142))$ with $c=(c_1,1-3 c_1,\frac{1}{2}-c_1,3 c_1-\frac{1}{2})$ is:
\begin{align*}
\mathcal{P}_{15}(x)=&x^4+\left(2 c_1-\frac{3}{2}\right) x^3 + \left(\frac{1}{2}-2 c_1\right) x^2+\left(18 c_1-72 c_1^2-\frac{3}{2}+96 c_1^3\right) x+72 c_1^2-96 c_1^3-18 c_1+\frac{3}{2}, \\
&\mathrm{where}~ \frac{1}{6}\leq c_1 \leq \frac{1}{3}.
\end{align*}

%\textcolor{blue}{ The matrix set $[\M(c,(34),(243),(1432))]$  has extreme points $E_{31}$ and $E_{32}$ given by
%$\begin{pmatrix}
%\frac{1}{3} & 0 & \frac{1}{2} & \frac{1}{6} \\
%\frac{1}{3} & 0 & \frac{1}{6} & \frac{1}{2} \\
%\frac{1}{3} & \frac{1}{2} & \frac{1}{6} & 0 \\
%0 & \frac{1}{2} & \frac{1}{6} & \frac{1}{3}
%\end{pmatrix}$
%and
%$\begin{pmatrix}
%\frac{1}{6} &  \frac{1}{2} & 0 & \frac{1}{3} \\
%\frac{1}{6} & \frac{1}{2} & \frac{1}{3} & 0 \\
%\frac{1}{6} & 0 & \frac{1}{3} & \frac{1}{2}  \\
% \frac{1}{2} & 0 & \frac{1}{3} & \frac{1}{6}
%\end{pmatrix},$ respectively, and hence any matrix is of the form $t E_{31}+(1-t) E_{32}$ for $0\leq t \leq 1.$}

   \item Characteristic polynomial of the generic matrix $A_2 = \M(c;(34),(243),(1432))$ with $c=(c_1,1-3 c_1,3 c_1-\frac{1}{2},\frac{1}{2}-c_1)$ is:
   \begin{align*}
  \mathcal{P}_{16}(x)=& x^4+\left(2c_1-\frac{3}{2}\right)x^3+\left(-\frac{1}{2}+6c_1-16c_1^2\right)x^2+\left(-26c_1+88c_1^2+\frac{5}{2}-96 c_1^3\right) x\\
  &-72 c_1^2+96 c_1^3+18 c_1-\frac{3}{2},\\
   &\mathrm{where}~ \frac{1}{6}\leq c_1 \leq \frac{1}{3}.
   \end{align*}

%\textcolor{blue}{ The matrices in $[\M(c,(34),(142),(24))]$ has extreme points $E_{41}$ and $E_{42}$ given by
%$\begin{pmatrix}
%\frac{1}{3} & 0 & \frac{1}{6} & \frac{1}{2} \\
%\frac{1}{3} & 0 & \frac{1}{2} & \frac{1}{6} \\
%0 &\frac{1}{2} & \frac{1}{6} & \frac{1}{3}  \\
% \frac{1}{3} & \frac{1}{2} & \frac{1}{6} & 0
%\end{pmatrix}$
%and
%$\begin{pmatrix}
%\frac{1}{6} &  \frac{1}{2} &  \frac{1}{3} & 0 \\
%\frac{1}{6} & \frac{1}{2} & 0 & \frac{1}{3} \\
%\frac{1}{2} & 0 & \frac{1}{3} & \frac{1}{6}  \\
% \frac{1}{6} & 0 & \frac{1}{3} & \frac{1}{2}
%\end{pmatrix},$ respectively, and hence any matrix is of the form $t E_{41}+(1-t) E_{42}$ for $0\leq t \leq 1$}.

 \item   Characteristic polynomial of the generic matrix $A_3 = \M(c;(34),(142),(24))$ with $c=(c_1,1-3 c_1,\frac{1}{2}-c_1,3 c_1-\frac{1}{2})$ is:
    \begin{align*}
    \mathcal{P}_{20}(x)=&x^4+\left(-\frac{5}{2}+6c_1\right)x^3+\left(-6c_1+\frac{3}{2}\right)x^2+\left(72c_1^2-96c_1^3-18c_1+\frac{3}{2}\right)x+96c_1^3-72c_1^2+18c_1-\frac{3}{2},\\
     &\mathrm{where}~ \frac{1}{6}\leq c_1 \leq \frac{1}{3}.
    \end{align*}

%\textcolor{blue}{ The matrices in $[\M(c,(243),(34),(1432))]$  has extreme points $E_{51}$ and $E_{52}$ given by
%$\begin{pmatrix}
%\frac{1}{3} & 0 & \frac{1}{2} & \frac{1}{6} \\
%\frac{1}{3} &  \frac{1}{2} & \frac{1}{6} & 0 \\
%\frac{1}{3} & 0 &  \frac{1}{6} & \frac{1}{2}  \\
% 0 & \frac{1}{2} & \frac{1}{6} & \frac{1}{3}
%\end{pmatrix}$
%and
%$\begin{pmatrix}
%\frac{1}{6} &  \frac{1}{2} & 0 &  \frac{1}{3} \\
%\frac{1}{6} & 0 & \frac{1}{3} & \frac{1}{2} \\
%\frac{1}{6} & \frac{1}{2} & \frac{1}{3} & 0  \\
% \frac{1}{2} & 0 & \frac{1}{3} & \frac{1}{6}
%\end{pmatrix},$ respectively, and hence any matrix is of the form $t E_{51}+(1-t) E_{52}$ for $0\leq t \leq 1$}.
\item     Characteristic polynomial of the generic matrix $A_4 = \M(c;(243),(34),(1432))$ with $c=(c_1,1-3 c_1,3 c_1-\frac{1}{2},\frac{1}{2}-c_1)$ is:
     \begin{align*}
    \mathcal{P}_{23}(x)=& x^4-4 c_1x^3+\left(-\frac{1}{2}+8c_1^2\right)x^2+\left(96c_1^3-80c_1^2+22c_1-2\right)x+72c_1^2-96c_1^3-18c_1+\frac{3}{2},\\
     & \mathrm{where}~\frac{1}{6}\leq c_1 \leq \frac{1}{3}.
     \end{align*}

%\textcolor{blue}{ The matrices in $[\M(c,(234),(14),(24))]$ has extreme points $E_{61}$ and $E_{62}$ given by
%$\begin{pmatrix}
%\frac{1}{3} &  \frac{1}{2} & \frac{1}{6} & 0 \\
%\frac{1}{3} & 0 &  \frac{1}{2} & \frac{1}{6} \\
%0 & \frac{1}{2} &   \frac{1}{6} & \frac{1}{3}  \\
%  \frac{1}{3} & 0 &  \frac{1}{6} & \frac{1}{2}
%\end{pmatrix}$
%and
%$\begin{pmatrix}
%\frac{1}{6} & 0 &  \frac{1}{3} &   \frac{1}{2} \\
%\frac{1}{6} &  \frac{1}{2} & 0 & \frac{1}{3} \\
%\frac{1}{2} & 0 &  \frac{1}{3} & \frac{1}{6}   \\
% \frac{1}{6} &  \frac{1}{2} & \frac{1}{3} & 0
%\end{pmatrix},$ respectively, and hence any matrix is of the form $t E_{61}+(1-t) E_{62}$ for $0\leq t \leq 1$}.
  \item    Characteristic polynomial of the generic matrix $A_5 = \M(c;(234),(14),(24))$ with $c=(c_1,3 c_1-\frac{1}{2},\frac{1}{2}-c_1,1-3 c_1)$ is:
      \begin{align*}
\mathcal{P}_{25}(x)=&x^4-x^3+\left(-24c_1^2+12c_1-\frac{3}{2}\right)x^2+\left(96c_1^3-48c_1^2+6c_1\right)x-96c_1^3+72c_1^2-18c_1+\frac{3}{2},\\
 &\mathrm{where}~ \frac{1}{6}\leq c_1 \leq \frac{1}{3}.
     \end{align*}

 \end{itemize}

 The computable expression of the eigenvalues of the matrices $A_1,\ldots,A_5$ can be obtained from these polynomials using Cardano's method. However we do not write the explicit expression of these eigenvalues as it is cumbersome. 
%\textcolor{red}{ though can be computed by using a computer algebra system.}

  Below we plot some of the eigenvalues of the generic matrices corresponding to the above cogredient classes of $\T_3.$
  \begin{figure}[H]
\centering
\includegraphics[height=6 cm,width=6.5 cm]{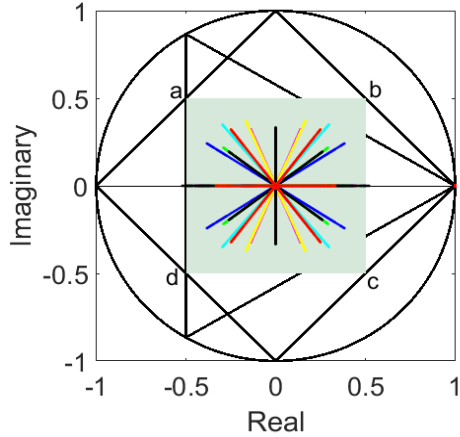}
\caption{Setting several values of the parameters which describe $\Lambda(\C_i,)$ $i=15, 16, 17, 19, 20, 21, 23, 25, 26$ above, the eigenvalues of generic matrices corresponding to these classes are plotted in different colors. }
\end{figure}

 \subsubsection{Complex spectrum of matrices of $\T_2$ type}
Now consider the cogredient classes $\C_{30}=\bf{[\M(c;(12)(34),(12),(34))]},\C_{31}=\bf{[\M(c;(234),(12)(34),(132))]},\C_{32}=\bf{[\M(c;(243),(123),(13)(24))]},\C_{34}=\bf{[\M(c;(23),(12)(34),(1342))]},$ and $\C_{37}=\bf{[\M(c;(23),(1342),(12)(34))]}$ whose corresponding generic matrices are of type $\T_2.$ We show below that they have non-real eigenvalues.

It can be easily shown that
\begin{align*}
\begin{split}
\Lambda(\M(c;(12)(34),(12),(34)))=&\left\{1,0,c_1+c_3-\frac{1}{2}+\frac{1}{2}\sqrt{4 c_1^2 + 4 c_3^2 -24 c_1 c_3 + 4 c_1 + 4 c_3 -1}, \right. \\
 &\left. c_1+c_3-\frac{1}{2}+\frac{1}{2}\sqrt{4 c_1^2 + 4 c_3^2 -24 c_1 c_3 + 4 c_1 + 4 c_3 -1}:0 \leq c_1,c_3 \leq \frac{1}{2}\right\}, ~\mathrm{where} \\
 & c=(c_1,\frac{1}{2}-c_1,c_3,\frac{1}{2}-c_3).\\
 \end{split}
\end{align*}

We shall now compute the global maximum and minimum of the function
 $f_1(c_1,c_3)=4 c_1^2 + 4 c_3^2 -24 c_1 c_3 + 4 c_1 + 4 c_3 -1$ over the rectangle $0 \leq c_1 \leq \frac{1}{2},0 \leq c_3 \leq \frac{1}{2}.$ We compute
$$ \nabla f_1
%=[\frac{\partial f_1}{\partial c_1},\frac{\partial f_1}{\partial c_3}]
=[8 c_1 -24 c_3 +4,8 c_3 -24 c_1 +4]$$  and the Hessian matrix
 $$H_1
 %=$\bmatrix{ \frac{\partial^2 f_1}{\partial c_1^2} & \frac{\partial^2 f_1}{\partial c_1 \partial c_3}\\
  %                     \frac{\partial^2 f_1}{\partial c_1 \partial c_3} & \frac{\partial^2 f_1}{\partial c_3^2}}$
       =\bmatrix{ 8 & -24 \\ -24 & 8}.$$

Solving for $\nabla f_1=0$ we have $c_1=\frac{1}{5}, c_3=\frac{1}{10}.$
 Since determinant of $H_1,$ $det(H_1)<0$ at $(\frac{1}{5},\frac{1}{10}),$ it is a saddle point  . On both the lines $c_1=0$ and $c_3=0,$ $f_1(c_1,c_3)$ has negative critical points, which are out of our range. Computing the functional values at the corner points we have $f_1(0,0)=-1,f_1(\frac{1}{2},0)=2,f_1(0,\frac{1}{2})=2,f_1(\frac{1}{2},\frac{1}{2})=-1.$ Hence $2$ and $-1$ are the global maximum and global minimum value of $f_1$ respectively.
So that the maximum modulus of both real and imaginary part of the complex eigenvalues of $\M(c;(12)(34),(12),(34))$ is $\frac{1}{2}.$}

\begin{figure}[H]
\centering
\includegraphics[height=6 cm,width=7 cm]{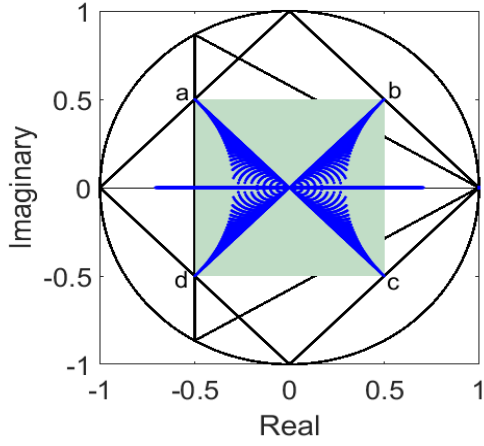}
\caption{Setting several values of $0\leq c_1, c_3\leq \frac{1}{2}$ eigenvalues of generic matrices corresponding to $\C_{30}$ are plotted in blue.}
\end{figure}

Now,
\begin{align*}
 \Lambda(\M(c;(234),(12)(34),(132)))=&\left\{1,0,\frac{1}{2}(c_1-3 c_3)+\frac{1}{4}+\frac{1}{2}\sqrt{-7 c_1^2+c_3^2+10 c_1 c_3+c_1-3 c_3+\frac{1}{4}},\right. \\
 &\left. \frac{1}{2}(c_1-3 c_3)+\frac{1}{4}-\frac{1}{2}\sqrt{-7 c_1^2+c_3^2+10 c_1 c_3+c_1-3 c_3+\frac{1}{4}}: 0 \leq c_1,c_3 \leq \frac{1}{2} \right \},\\
 &\mathrm{where}~ c=(c_1,\frac{1}{2}-c_1,c_3,\frac{1}{2}-c_3).
\end{align*}

We shall now compute the global maximum and minimum of the function
$f_2(c_1,c_3)=-7 c_1^2+c_3^2+10 c_1 c_3+c_1-3 c_3+\frac{1}{4}$ over the rectangle $0 \leq c_1 \leq \frac{1}{2},0 \leq c_3 \leq \frac{1}{2}.$
Now $$\nabla f_2=[-14 c_1+10 c_3+1,2 c_3+10 c_1-3]$$ and the Hessian matrix
$$H_2=\bmatrix{ -14 & 10 \\ 10 & 2}.$$
 Solving for $\nabla f_2=0$ we have $c_1=\frac{1}{4}, c_3=\frac{1}{4}.$ Since $det(H_2)<0, (\frac{1}{4},\frac{1}{4})$ is a saddle point. On the line $c_3=0,$ $f_2(c_1,0)$ has a maximum value $\frac{2}{7}$ at $c_1=\frac{1}{14}.$ The critical point of $f_2(0,c_3)$ is $c_3=\frac{3}{2}>\frac{1}{2}.$ Computing the values at the corner points we have $f_2(0,0)=\frac{1}{4},f_2(\frac{1}{2},0)=-1,f_2(0,\frac{1}{2})=-1,f_2(\frac{1}{2},\frac{1}{2})=\frac{1}{4}.$ Hence $\frac{2}{7}$ and $-1$ are the global maximum and global minimum value of $f_2$ respectively. Hence maximum absolute value for imaginary part as well as real part of a complex eigenvalue in $\C_{31}$ is $\frac{1}{2}.$

%Using calculus it can be easily shown that the function $-7 c_1^2+c_3^2+10 c_1 c_3+c_1-3 c_3+\frac{1}{4}$ has global minimum $-1$ over the square $0 \leq c_1,c_3 \leq \frac{1}{2}.$

Similarly,
%the set of all eigenvalues of $\C_{32},\C_{33},\C_{34}$ and $C_{37}$ are respectively
%{\scriptsize
\begin{align*}
\Lambda(\M(c;(243),(123),(13)(24)))=&\left\{1,0,c_2-\frac{1}{4}+\frac{1}{2}\sqrt{8 c_1^2-4 c_2^2-4 c_1+2 c_2+\frac{1}{4}}, \right. \\
&\left.c_2-\frac{1}{4}-\frac{1}{2}\sqrt{8 c_1^2-4 c_2^2-4 c_1+2 c_2+\frac{1}{4}}: 0 \leq c_1,c_2 \leq \frac{1}{2} \right\},
\end{align*}
where $c=(c_1,c_2,\frac{1}{2}-c_1,\frac{1}{2}-c_2).$

Finally,
\begin{align*}
\Lambda(\M(c;(23),(12)(34),(1342)))=& \left\{1,0, \frac{1}{2}(c_1+c_3)-\frac{1}{4}+\frac{1}{2}\sqrt{-7 c_1^2 + 9 c_3^2+2c_1 c_3+3 c_1-5 c_3+\frac{1}{4}},\right. \\
&\left. \frac{1}{2}(c_1+c_3)-\frac{1}{4}-\frac{1}{2}\sqrt{-7 c_1^2 + 9 c_3^2+2c_1 c_3+3 c_1-5 c_3+\frac{1}{4}}: 0 \leq c_1,c_3 \leq \frac{1}{2} \right\},
\end{align*}
where $c=(c_1,\frac{1}{2}-c_1,c_3,\frac{1}{2}-c_3),$
and
\begin{align*}
\Lambda(\M(c;(23),(1342),(12)(34)))=& \left\{1,0,\frac{1}{2}(1+i)(2c_1+2c_3-1),\frac{1}{2}(1-i)(2c_1+2c_3-1): 0 \leq c_1,c_3 \leq \frac{1}{2}\right\},
\end{align*}
where $c=(c_1,\frac{1}{2}-c_1,c_3,\frac{1}{2}-c_3).$

 Let $f_3(c_1,c_2)=8 c_1^2-4 c_2^2-4 c_1+2 c_2+\frac{1}{4}$ defined over the rectangle $0 \leq c_1 \leq \frac{1}{2},0 \leq c_2 \leq \frac{1}{2}.$
We compute
$$\nabla f_3=[16 c_1-4,-8 c_2+2]$$
 and the Hessian matrix $$H_3=\bmatrix{ 16 & 0 \\ 0 & -8}. $$
 Solving for $\nabla f_3=0$ we have $c_1=\frac{1}{4}, c_3=\frac{1}{4}.$ Since $det(H_3)<0, (\frac{1}{4},\frac{1}{4})$ is a saddle point. On the line $c_2=0,$ $f_3(c_1,0)$ has a minimum value $-\frac{1}{4}$ at $c_1=\frac{1}{4}$ and on the line $c_1=0,$ $f_3(0,c_2)$ has a maximum value $\frac{1}{2}.$ At the corner points of the rectangle $0 \leq c_1, c_2 \leq \frac{1}{2},$ we have $f_3(0,0)=f_3(\frac{1}{2},0)=f_3(0,\frac{1}{2})=f_3(\frac{1}{2},\frac{1}{2})=\frac{1}{4}.$ Hence $\frac{1}{2}$ and $-\frac{1}{4}$ are the global maximum and global minimum values of $f_3$ respectively.

 Let $f_4(c_1,c_3)=-7 c_1^2 + 9 c_3^2+2c_1 c_3+3 c_1-5 c_3+\frac{1}{4}$ over the rectangle $0 \leq c_1 \leq \frac{1}{2},0 \leq c_3 \leq \frac{1}{2}.$ We compute
$$\nabla f_4=[-14 c_1+2 c_3+3,18 c_3 +2 c_1-5]$$ and the Hessian matrix
$$H_4=\bmatrix{-14 & 2\\ 2 & 18}$$ for the function $f_4.$
Solution of $\nabla f_4=0$ is given by $c_1=\frac{1}{4}, c_3=\frac{1}{4}.$ Since $det(H_4)<0, (\frac{1}{4},\frac{1}{4})$ is a saddle point. On the line $c_3=0,$ $f_4(c_1,0)$ has a maximum value $\frac{4}{7}$ at $c_1=\frac{3}{14}$ and on the line $c_1=0,$ $f_4(0,c_2)$ has a minimum value $-\frac{4}{9}$ at $c_3=\frac{5}{18}.$ Computing the values of $f_4(c_1,c_3)$ at the corner points of the rectangle $0 \leq c_1, c_3 \leq \frac{1}{2},$ we have $f_4(0,0)=\frac{1}{4}=f_4(\frac{1}{2},0),f_4(0,\frac{1}{2})=0=f_4(\frac{1}{2},\frac{1}{2}).$ Hence $\frac{4}{7}$ and $-\frac{4}{9}$ are the global maximum and global minimum values of $f_4.$

From the above it can be seen easily that the maximum modulus of the imaginary part of complex eigenvalues of  $\C_{32},\C_{34}$ and $C_{37}$ are $\frac{1}{4},\frac{1}{3}$ and $\frac{1}{2}$ respectively, whereas, the absolute value of real parts are $\frac{1}{4},\frac{1}{4},\frac{1}{2}$ respectively.

\begin{figure}[H]
\centering
\subfigure[Eigenvalues of the generic matrix corresponding to  $\C_{31}$]{\includegraphics[height=5.5 cm,width=6 cm]{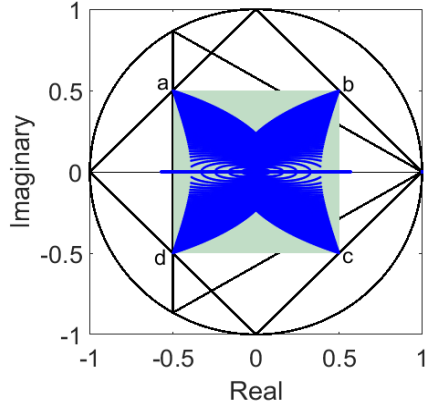}}
\hspace{0.4cm}
\subfigure[Eigenvalues of the generic matrix corresponding to  $\C_{32}$]{\includegraphics[height=5.5 cm,width=6 cm]{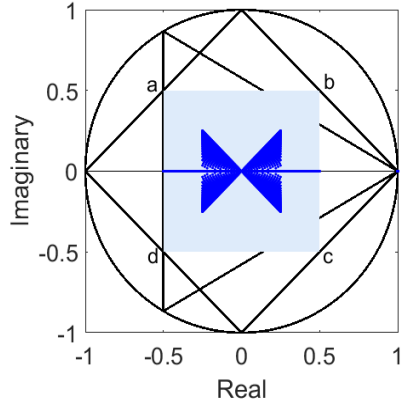}}
\caption{Setting several values of $0\leq c_1, c_3\leq \frac{1}{2}, 0\leq c_1,c_2\leq \frac{1}{2}$ eigenvalues of generic matrices corresponding to $\C_{31}, \C_{32}$ are plotted in blue, respectively.}
\end{figure}

\begin{figure}[H]
\centering
\subfigure[Eigenvalues of the generic matrix corresponding to $\C_{34}$]{\includegraphics[height=5.5 cm,width=6 cm]{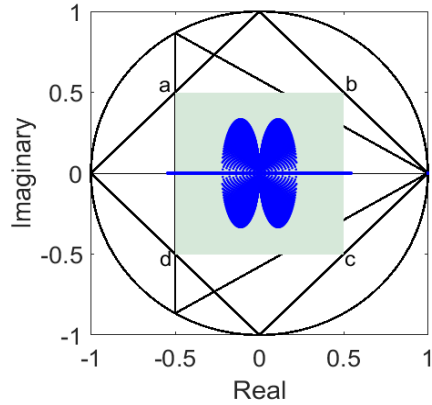}}
\hspace{0.4cm}
\subfigure[Eigenvalues of the generic matrix corresponding to $\C_{37}$]{\includegraphics[height=5.5 cm,width=6 cm]{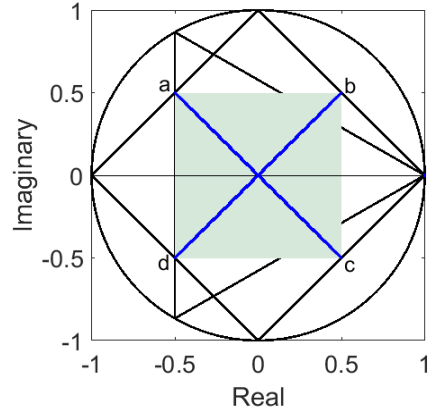}}
\caption{Setting several values of $0\leq c_1, c_3\leq \frac{1}{2}$ eigenvalues of generic matrices corresponding to $\C_{34}, \C_{37}$ are plotted in blue.}\label{fig:c34_37}
\end{figure}

We Plot the eigenvalue regions of all the cogredient classes except that  having real spectrum.  Observe that the eigenvalue regions  are contained in the union of the real line segment $[-1,1]$ and the square ${\bf{abcd}}$ drawn in Figures \ref{fig:c9_11}-\ref{fig:c34_37} having vertices $\bf{a}=-\frac{1}{2}+ i \frac{1}{2},\bf{b}=\frac{1}{2}+i \frac{1}{2},\bf{c}=\frac{1}{2}- i \frac{1}{2}$ and $\bf{d}=-\frac{1}{2}- i \frac{1}{2}, \bf{i}=\sqrt{-1}.$ Finally uniting $\Lambda(C_i), i=1,\hdots,37$ the eigenvalue region of $\pds_4$ is given by the shaded region in Figure \ref{fig1}.

 \begin{figure}[H]
   \centering
 \includegraphics[width=8 cm]{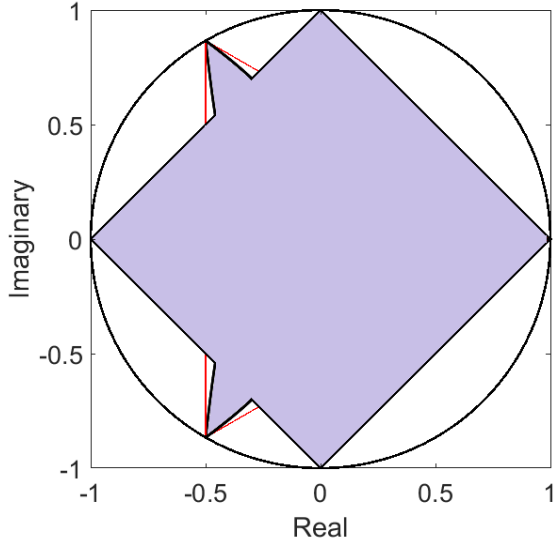}
\caption{Approximate eigenvalue region of $\pds_4,$ given by the shaded region. The red line segments are parts of the boundary of $\Pi_3.$}\label{fig1}
\end{figure}

\subsection{Eigenvalue region for $\pds_4$ is different from  eigenvalue region for $\Omega_4$}

% Consider the doubly stochastic matrix $C$=
% \begin{pmatrix}
% 0 & 0.0670 & 0.9330 & 0\\
% 0.9330 & 0 & 0.0670 & 0\\
% 0.0670 & 0.933 & 0 & 0\\
% 0 & 0 & 0 & 1
% \end{pmatrix}$.
%$\begin{pmatrix}
%0 & \frac{2-\sqrt3}{4}&\frac{2+\sqrt3}{4} & 0 \\
%\frac{2+\sqrt3}{4} & 0 & \frac{2-\sqrt3}{4} & 0\\
%\frac{2-\sqrt3}{4}  & \frac{2+\sqrt3}{4} & 0 & 0 \\
%0 & 0 & 0 & 1
%\end{pmatrix}.$
% The spectrum of $C$ is $\{1, 1, -\frac{1}{2} +  \frac{3}{4}i, -\frac{1}{2} -  \frac{3}{4}i \}$. Below we show that,  $ - \frac{1}{2} +  \frac{3}{4}i$ is not an eigenvalue of any permutative doubly stochastic matrix.
%
% First let us show that  $\{1, 1, -\frac{1}{2} +  \frac{3}{4}i, -\frac{1}{2} -  \frac{3}{4}i \}$ cannot be a spectrum of a permutative doubly stochastic matrix. In the next theorem, we prove a general version of this statement.
 %\begin{remark}

In this section we pick two line segments from $\omega_4,$ and show that these lines will not appear in the eigenvalue region of $\pds_4.$ 
%\textcolor{red}{We use Maple$18$ to obtain explicit expressions of many of the terms mentioned in the proof of the Theorem \ref{theorem:line1} and Theorem \ref{theorem:line2}.} 
Here we  characterize  eigenvalues from the coefficients of characteristic polynomial  in the proof of the Theorem \ref{theorem:line1} and Theorem \ref{theorem:line2}.  First we recall the following definition.

\begin{definition}(Reducible matrix)
If a matrix $A$ can be written as
$$P^TAP=
\bmatrix{
A_1 & 0 \\
A_3 & A_2
},
$$
for some permutation matrix $P,$ where $A_1$ and $A_2$ are square matrices, then $A$ is reducible, otherwise $A$ is called irreducible.
\end{definition}

  Note that, for $-\frac{1}{2} \leq \sigma \leq 1$ and $\sigma + \sqrt{3}|\tau| \leq 1$, the set $\{1, 1, \sigma + \tau i, \sigma - \tau i \}$  is realized as spectrum of the the doubly stochastic matrix
  \begin{eqnarray}\label{DS1}
  \bmatrix{
  \frac{1 + 2 \sigma}{3} & \frac{1 - \sigma - \sqrt{3} \tau}{3} & \frac{1-\sigma + \sqrt{3}\tau}{3} & 0\\
\frac{1-\sigma + \sqrt{3} \tau}{3} & \frac{1 + 2 \sigma}{3} & \frac{1 - \sigma - \sqrt{3} \tau}{3} & 0 \\
\frac{1 - \sigma - \sqrt{3} \tau}{3} & \frac{1- \sigma + \sqrt{3}\tau}{3} & \frac{1 + 2 \sigma}{3} & 0 \\
0 & 0 & 0 & 1
}.
\end{eqnarray}
However in Theorem \ref{t1} we show  that the above set $\{1, 1, \sigma + \tau i, \sigma - \tau i \}$ with $\sigma^2 + \tau^2 <1$ and $\tau \neq 0$, can not be spectrum of any PDS matrix. The bounds for $\sigma$ and $\tau$ are obtained from the region $\Pi_3$.
  %\end{remark}

 \begin{theorem} \label{t1}
 There is no $\pds$ matrix with  spectrum $\{1, 1, \sigma + \tau i, \sigma - \tau i \}$, where $\sigma^2 + \tau^2 < 1$ and $\tau \neq 0$.
 \end{theorem}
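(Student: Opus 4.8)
The plan is to exploit the fact that a doubly stochastic matrix whose eigenvalue $1$ has algebraic multiplicity at least two must be reducible, and then to combine this with the permutative structure. First I would recall from the Perron--Frobenius Theorem (Theorem \ref{PF}) that an irreducible doubly stochastic matrix has $1$ as a \emph{simple} eigenvalue. Hence, if $A$ is a PDS matrix of order $4$ with spectrum $\{1,1,\sigma+\tau i,\sigma-\tau i\}$, then $A$ must be reducible. A short row-sum/column-sum computation shows that whenever a doubly stochastic matrix is written in the reducible form $P^{T}AP=\bmatrix{A_1 & 0 \\ A_3 & A_2}$, the leading principal block $A_1$ already saturates both its row sums and its column sums, so $A_3=0$ and both $A_1,A_2$ are doubly stochastic. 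Iterating, $A$ is permutationally similar to a direct sum $B_1\oplus B_2$ of two irreducible doubly stochastic matrices, each of which contributes exactly one eigenvalue equal to $1$; thus for $n=4$ the only possibilities for the sizes of $B_1,B_2$ are $(2,2)$ and $(1,3)$.

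In the $(2,2)$ case, every $2\times2$ doubly stochastic matrix is of the form $\bmatrix{a & 1-a \\ 1-a & a}$, whose eigenvalues $1$ and $2a-1$ are real; hence $B_1\oplus B_2$, and therefore $A$, has only real eigenvalues, contradicting $\tau\neq0$. In the $(1,3)$ case, the $1\times1$ block must equal $[1]$, so one row of the permuted matrix $P^{T}AP$ is $(1,0,0,0)$. Since a permutation similarity merely reorders the rows of $A$ and permutes the entries within each row, every row of $P^{T}AP$ is a permutation of the first row $c$ of $A$; hence $c$ itself is a permutation of $(1,0,0,0)$. Then each row of $A$ has a single nonzero entry equal to $1$, and double stochasticity forces the same for the columns, so $A$ is a permutation matrix. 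All eigenvalues of a permutation matrix are roots of unity and hence have modulus $1$, giving $\sigma^{2}+\tau^{2}=1$, which contradicts $\sigma^{2}+\tau^{2}<1$. In either case we reach a contradiction, proving the theorem.

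The only step requiring care is the reduction itself: verifying that a reducible doubly stochastic matrix decomposes, after a permutation similarity, as a genuine \emph{direct sum} of doubly stochastic blocks (the vanishing of the off-diagonal block), and that the multiplicity of the eigenvalue $1$ equals the number of irreducible blocks (so multiplicity $2$ means exactly two blocks). Everything afterwards is elementary — the $2\times2$ classification and the identification of a $(0,1)$ PDS matrix with a permutation matrix are immediate — and, notably, the bound $\sigma^{2}+\tau^{2}<1$ together with $\tau\neq0$ is exactly what is needed to close off the two sub-cases respectively.
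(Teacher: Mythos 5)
Your proof is correct and follows essentially the same route as the paper's: use Perron--Frobenius to force reducibility, split into the $(2,2)$ and $(1,3)$ block cases, and derive contradictions from $\tau\neq 0$ and $\sigma^2+\tau^2<1$ respectively. In fact you supply two justifications the paper leaves implicit — that a reducible doubly stochastic matrix is permutationally similar to a genuine direct sum of doubly stochastic blocks, and that in the $(1,3)$ case the permutative structure forces $A$ to be a permutation matrix — so no changes are needed.
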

 \noindent \pf
  Suppose that $A$ is a permutative doubly stochastic  matrix with the spectrum $\{1, 1, \sigma + \tau i, \sigma - \tau i \}$, with $\sigma^2 + \tau^2 < 1$. If $A$ is reducible, then it is permutationally similar to the matrix
 $\bmatrix{
 A_1 & 0\\
 0 & A_2
 }$,  where both $A_1$ and $A_2$ are square. If $A_1$ and $A_2$ are $2 \times 2$ matrices, then all the eigenvalues of $A$ are real. If $A_1$ is a $3 \times 3$ matrix, then $A$ is a permutation matrix. Thus, all the eigenvalues of the matrix $A$ lie on the unit circle $|z|=1$, and hence we have $\sigma^2 + \tau^2 = 1$.  So $A$ must be irreducible. Now,  by Theorem \ref{PF}, the spectral radius of $A$  is $1$, which is a simple eigenvalue of $A$. Thus, there does not exist any $\pds$  matrix with the spectrum $\{1, 1, \sigma + \tau i, \sigma - \tau i \}$.
   $\hfill{\square}$
%\begin{corollary}
%  The set $\{1, 1, -\frac{1}{2} +  \frac{3}{4} i, -\frac{1}{2} -  \frac{3}{4} i \}$ is not realized as spectrum of any permutative doubly stochastic matrix of order $4$.
%    \end{corollary}

 We recall the following theorems regarding the eigenvalues of DS matrices.
\begin{theorem}\cite{Minc1988}\label{t8}
Let $A$ be an $n \times n$ irreducible doubly stochastic matrix. If $A$ has exactly $k$ eigenvalues of unit modulus, then these are the $k^{th}$ roots of unity. In addition, if $k>1,$ then $k$ is a divisor of $n.$
\end{theorem}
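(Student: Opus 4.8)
The plan is to combine the Perron--Frobenius theory for irreducible nonnegative matrices (Theorem \ref{PF}) with the extra rigidity imposed by double stochasticity. First I would record the easy facts: a doubly stochastic $A$ is nonnegative, the all-ones vector $(1,\dots,1)^T$ is a positive eigenvector with eigenvalue $1$, and every eigenvalue has modulus at most $1$ (apply Ger\v{s}gorin's theorem to $A^T$, or argue directly from stochasticity). Since $A$ is irreducible, Theorem \ref{PF} shows the spectral radius $1$ is a \emph{simple} eigenvalue with a positive eigenvector; in particular the $k$ eigenvalues of unit modulus are precisely the peripheral spectrum of $A$, and by Frobenius' theory each of them is simple.

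The heart of the argument is the structure theorem for the peripheral spectrum of an irreducible nonnegative matrix. For each unimodular eigenvalue $\lambda$ one builds, from the entries of a corresponding eigenvector, a unitary diagonal ``phase matrix'' $D_\lambda$ satisfying $D_\lambda^{-1} A D_\lambda = \bar\lambda A$ (the same construction that already appears inside the proof of Theorem \ref{PF}). Consequently the set $G$ of unimodular eigenvalues is closed under multiplication and inversion, hence is a finite subgroup of the circle group; since $|G| = k$ and $1 \in G$, we get $G = \{1, \zeta, \zeta^2, \dots, \zeta^{k-1}\}$ with $\zeta = e^{2\pi i/k}$, i.e.\ the $k$-th roots of unity. (Alternatively this first assertion may simply be quoted from \cite{Minc1988}.)

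For the divisibility claim suppose $k>1$ and take $\lambda = \zeta$. From $D_\zeta^{-k} A D_\zeta^{k} = A$ and irreducibility of $A$ one gets $D_\zeta^{k}$ scalar, so after normalization the diagonal entries of $D_\zeta$ are $k$-th roots of unity; grouping the indices $\{1,\dots,n\}$ by which power of $\zeta$ they carry partitions the index set into classes $V_1,\dots,V_k$ of sizes $n_1,\dots,n_k$, with respect to which $A$ is permutationally similar to the cyclic block form whose only possibly nonzero blocks are $A_{12}, A_{23}, \dots, A_{k-1,k}, A_{k1}$ and whose diagonal blocks vanish. Now double stochasticity finishes it: the sum of \emph{all} entries in block-row $i$ equals $n_i$ (each of its $n_i$ rows sums to $1$) and this sum lies entirely in the block $A_{i,i+1}$; likewise the sum of all entries in block-column $i+1$ equals $n_{i+1}$ and also lies entirely in $A_{i,i+1}$. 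Hence $n_i = n_{i+1}$ for all $i$ (indices read mod $k$), so $n_1 = \cdots = n_k = n/k$ and $k \mid n$.

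I expect the main obstacle to be the structure-theorem step: identifying the $k$ peripheral eigenvalues with the \emph{full} set of $k$-th roots of unity is precisely Frobenius' theorem on imprimitive irreducible matrices, and if it is not quoted from \cite{Minc1988} it has to be reconstructed via the phase-matrix/diagonal-similarity argument sketched above. By contrast, the portion genuinely special to doubly stochastic matrices---the equality of the block sizes, and hence $k\mid n$---is a short counting argument once the cyclic block decomposition is in hand.
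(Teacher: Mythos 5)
The paper does not prove this statement at all: Theorem \ref{t8} is quoted verbatim from Minc \cite{Minc1988} as a known result, so there is no in-paper argument to compare yours against. Judged on its own, your proof is correct and is essentially the standard one. The first half (the peripheral spectrum of an irreducible nonnegative matrix is a finite multiplicative group of unimodular numbers, hence the full set of $k$-th roots of unity) is exactly Frobenius' imprimitivity theorem, obtained via the diagonal phase-matrix similarity $\lambda A = D_\lambda A D_\lambda^{-1}$; your closure-under-multiplication argument and the observation that a finite subset of the circle closed under multiplication is a subgroup are both sound. The second half is the only place double stochasticity enters, and your counting is right: in the cyclic block form the whole of block-row $i$ lives in the single block $A_{i,i+1}$, whose entry sum is therefore $n_i$ by row-stochasticity and simultaneously $n_{i+1}$ by column-stochasticity, forcing $n_1=\cdots=n_k=n/k$. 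Two very minor points you may wish to make explicit if you write this up in full: the classes $V_1,\dots,V_k$ are all nonempty (if one were empty the equalities $n_i=n_{i+1}$ would force all of them to be empty, contradicting $n\geq 1$, or one can argue directly from irreducibility), and the normalization of $D_\zeta$ so that its entries are $k$-th roots of unity uses that a diagonal matrix commuting with an irreducible matrix is scalar, which deserves a one-line justification via strong connectivity of the digraph of $A$.
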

 \begin{theorem} \cite{Perfect1965} \label{t3}
 If all the characteristic roots of a doubly stochastic matrix $D$ lie on the unit circle, then $D$ is a permutation matrix.
 \end{theorem}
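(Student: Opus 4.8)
The statement is classical (it is the cited result of Perfect and Mirsky), so the plan is to give a short self-contained argument resting on the interplay between the Frobenius norm of $D$, its singular values, and its determinant, and to note that a second route through Theorem \ref{t8} is also available.

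The first step is to record two elementary facts about a doubly stochastic $D$. Since every entry satisfies $0\le D_{ij}\le 1$ and the $n$ rows each sum to $1$, we have $\sum_{i,j}D_{ij}^2\le\sum_{i,j}D_{ij}=n$, i.e. the Frobenius norm obeys $\|D\|_F^2\le n$, with equality if and only if each $D_{ij}\in\{0,1\}$. Moreover $D$ is a contraction in the Euclidean norm: the induced norms satisfy $\|D\|_1=\|D\|_\infty=1$ (maximal column, respectively row, sum), hence $\|D\|_2\le\sqrt{\|D\|_1\,\|D\|_\infty}=1$, so every singular value $\sigma_k(D)$ lies in the interval $[0,1]$.

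The second step uses the hypothesis. Because every eigenvalue $\lambda_k(D)$ has modulus $1$, $|\det D|=\prod_k|\lambda_k(D)|=1$; since also $|\det D|=\prod_k\sigma_k(D)$ with each factor in $[0,1]$, we must have $\sigma_k(D)=1$ for all $k$, i.e. $D$ is orthogonal. Then $\|D\|_F^2=\mathrm{tr}(D^TD)=\sum_k\sigma_k(D)^2=n$, so by the equality case above every entry of $D$ is $0$ or $1$; a doubly stochastic matrix with all entries in $\{0,1\}$ has exactly one $1$ in each row and each column, hence is a permutation matrix.

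There is no serious obstacle in this argument — the only point needing a touch of care is the equality discussion, i.e. that $\|D\|_F^2=n$ forces $D_{ij}\in\{0,1\}$, which follows termwise from $D_{ij}^2\le D_{ij}$ for $D_{ij}\in[0,1]$. Alternatively one can bypass the norm estimates: a reducible doubly stochastic matrix is permutation-similar to a direct sum of doubly stochastic matrices, each of which inherits the spectral hypothesis, so one reduces to the irreducible case; then Theorem \ref{t8} gives that the spectrum of $D$ consists of the (necessarily distinct) $n$-th roots of unity, whence $D$ is diagonalizable and $D^n=I$, so $D$ and $D^{-1}=D^{n-1}$ are both nonnegative, which forces $D$ to be a monomial matrix and therefore — being doubly stochastic — a permutation matrix. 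I would present the first argument as the proof, since it is shorter and avoids the reducibility decomposition.
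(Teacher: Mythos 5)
Your argument is correct, but there is nothing in the paper to compare it against: Theorem \ref{t3} is imported verbatim from Perfect and Mirsky with a citation and is used later purely as a black box (to conclude that a DS matrix with spectrum $\{1,1,-\tfrac{1}{2}\pm\tfrac{\sqrt3}{2}i\}$ must be a permutation matrix). So the paper supplies no proof of its own. Your singular-value argument is a legitimate self-contained proof: the chain $1=\prod_k|\lambda_k(D)|=|\det D|=\prod_k\sigma_k(D)$ together with $\sigma_k(D)\le\|D\|_2\le\sqrt{\|D\|_1\,\|D\|_\infty}=1$ forces every singular value to equal $1$, hence $D^TD=I$, hence $\|D\|_F^2=n$, and the equality case of $\sum_{i,j}D_{ij}^2\le\sum_{i,j}D_{ij}$ (termwise $D_{ij}^2\le D_{ij}$ for $D_{ij}\in[0,1]$) pins every entry to $\{0,1\}$; a $0$--$1$ doubly stochastic matrix is a permutation matrix. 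Every step is standard and correctly justified. Your alternative route through Theorem \ref{t8} also works, but it leans on two further standard facts you only gesture at — that a reducible doubly stochastic matrix is permutation-similar to a genuine direct sum of doubly stochastic blocks (the off-diagonal block vanishes automatically), and that a nonnegative matrix with nonnegative inverse is monomial — so your choice to present the first argument as the main one is sensible.
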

  \begin{theorem} \cite{Benvenuti2018}\label{t7}
 The set $\omega_4^0$ of points in the complex plane that are eigenvalues of four dimensional doubly stochastic matrices with zero trace is $\omega_4^0=[-1,1]\cup \Pi_4^0.$
 \end{theorem}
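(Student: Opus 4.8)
The plan is to prove the two inclusions $\Pi_4^0\cup[-1,1]\subseteq\omega_4^0$ and $\omega_4^0\subseteq\Pi_4^0\cup[-1,1]$ separately. For the realizability inclusion, $\Pi_4^0\subseteq\omega_4^0$ is exactly the Perfect--Mirsky containment $\Pi_n^0\subseteq\omega_n^0$ recalled in the introduction, so nothing new is needed there. To realize the real segment $[-1,1]$, I would exhibit the one-parameter family
\begin{equation*}
A_a=\bmatrix{0&0&a&1-a\\0&0&1-a&a\\a&1-a&0&0\\1-a&a&0&0},\qquad 0\le a\le 1,
\end{equation*}
which is doubly stochastic with zero trace and, being of the block form with off-diagonal block $\bmatrix{a&1-a\\1-a&a}$, has spectrum $\{1,-1,2a-1,1-2a\}$; as $a$ runs over $[0,1]$ the eigenvalue $2a-1$ sweeps out all of $[-1,1]$.

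For the converse inclusion, let $A$ be a trace-zero doubly stochastic matrix of order $4$. The first observation is that nonnegativity together with $\mathrm{tr}(A)=0$ forces the entire diagonal of $A$ to vanish. I would then dispose of the reducible case: a reducible doubly stochastic matrix is, after a permutation similarity, a direct sum of smaller doubly stochastic blocks, and since a $1\times1$ doubly stochastic block equals $[1]$ and contributes trace $1$, the only block partition compatible with zero trace is $2+2$, each block being the transposition $\bmatrix{0&1\\1&0}$; this yields spectrum $\{1,1,-1,-1\}\subseteq[-1,1]$. Hence I may assume $A$ irreducible, so by Theorem \ref{PF} the eigenvalue $1$ is simple and equals the spectral radius. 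Letting $k$ denote the number of eigenvalues of unit modulus, Theorem \ref{t8} forces $k\mid 4$, so $k\in\{1,2,4\}$; the value $k=3$ is impossible, which is precisely what rules out a full set of cube roots of unity. If $k=4$ then Theorem \ref{t3} makes $A$ a permutation matrix, necessarily a single $4$-cycle by irreducibility and zero trace, with spectrum $\{1,i,-1,-i\}\subseteq\Pi_4^0$. If $k=2$ the unit-modulus eigenvalues are $\{1,-1\}$ and, by zero trace, the remaining two eigenvalues sum to $0$; they are therefore either real in $(-1,1)$ or a purely imaginary pair $\pm\tau i$ with $|\tau|<1$, and in both cases they lie in $\Pi_4^0\cup[-1,1]$.

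This leaves the essential case $k=1$, where the three non-Perron eigenvalues lie strictly inside the unit disk and sum to $-1$. If they are all real they lie in $(-1,1)\subseteq[-1,1]$ and we are done, so assume a real eigenvalue $\lambda$ together with a genuine conjugate pair $\sigma\pm\tau i$, $\tau\neq0$. Writing the elementary symmetric functions of $\{1,\lambda,\sigma\pm\tau i\}$ in terms of the principal minors of the zero-diagonal matrix $A$, I would use that every $3\times3$ principal minor equals a sum of two nonnegative three-cycle products, so the third elementary symmetric function satisfies $e_3\ge0$; substituting $\lambda=-1-2\sigma$ gives $e_3=-2\sigma\big((1+\sigma)^2+\tau^2\big)$, whence $\sigma\le0$. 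Since $\mathrm{conv}\{i,-1,-i\}=\Pi_4\cap\{\mathrm{Re}\,z\le0\}$, it then remains only to show $\sigma\pm\tau i\in\Pi_4$, i.e. $|\tau|\le 1+\sigma$.

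The eigenvalues of any $4\times4$ doubly stochastic matrix already lie in $\omega_4=\Pi_2\cup\Pi_3\cup\Pi_4$ (Levick et al.), so combined with $\sigma\le0$ the only remaining difficulty is to exclude the thin sliver of $\Pi_3\setminus\Pi_4$ with $\mathrm{Re}\,z\le0$; this is the main obstacle. I expect the analogous minor inequality $e_2\le0$ to be too weak here, since it only yields $\tau^2\le 3\sigma^2+2\sigma+1$, which exceeds the required $(1+\sigma)^2$ by $2\sigma^2$. I would instead exploit the polytope structure of zero-diagonal doubly stochastic matrices, whose extreme points are exactly the nine derangement permutation matrices of order $4$, and carry out a boundary optimization of the eigenvalue map over this polytope to pin down $|\tau|\le1+\sigma$ and thereby close the argument. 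As supporting evidence that the sliver genuinely collapses, note that its extreme tip, the cube root of unity, is already excluded: it would force $\{1,\omega,\omega^2\}$ among the eigenvalues of an irreducible order-$4$ matrix, contradicting $3\nmid 4$ in Theorem \ref{t8}.
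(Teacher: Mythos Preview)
The paper does not prove this theorem at all: it is quoted verbatim from Benvenuti \cite{Benvenuti2018} and used as a black box in the proof of Theorem \ref{theorem:line1}. There is therefore no ``paper's own proof'' to compare against.

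Your write-up is a reasonable sketch of how such a proof might begin, and the computation $e_3=-2\sigma\big((1+\sigma)^2+\tau^2\big)\ge 0\Rightarrow\sigma\le 0$ is correct and useful. However, the argument is genuinely incomplete at the decisive point. After reducing to $\sigma\le 0$ and invoking $\omega_4=\Pi_2\cup\Pi_3\cup\Pi_4$, you still need to exclude the region $(\Pi_3\setminus\Pi_4)\cap\{\mathrm{Re}\,z\le 0\}$, and you acknowledge that your inequality $e_2\le 0$ gives only $\tau^2\le 3\sigma^2+2\sigma+1$, which is strictly weaker than the required $\tau^2\le(1+\sigma)^2$. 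The proposed remedy, ``carry out a boundary optimization of the eigenvalue map over the derangement polytope'', is not a proof step but a placeholder for one: this is precisely the hard analytic work that Benvenuti's paper performs via a detailed study of the characteristic-polynomial coefficients, and nothing in your outline indicates how that optimization would actually be executed or why it terminates in the desired inequality. The closing remark that the cube root of unity itself is excluded by $3\nmid 4$ handles only a single point on the boundary of the sliver, not the open region.

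In short: the paper offers no proof to match, and your proposal, while well-structured up to the last step, leaves the essential exclusion $|\tau|\le 1+\sigma$ unproved.
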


The complex number $-\frac{1}{2} +  \frac{1}{2} i$ is in $\Lambda(\pds_4)$.
Since ,  the matrix
$\M\left((0,\frac{1}{2},0,\frac{1}{2});(12)(34),(12),(34)\right)=$
$\bmatrix{
0 & \frac{1}{2} & 0 & \frac{1}{2}\\
\frac{1}{2} & 0 & \frac{1}{2} & 0 \\
\frac{1}{2} & 0 & 0 & \frac{1}{2}\\
0 & \frac{1}{2} & \frac{1}{2} & 0
}$ is  doubly stochastic and  permutative  with spectrum $\left\{1, 0, -\frac{1}{2} +  \frac{1}{2} i,-\frac{1}{2} -  \frac{1}{2} i \right\}$.

The following theorem shows that $-\frac{1}{2} +  \frac{\sqrt3}{2} i$ is in $\Lambda(\pds_4).$
\begin{theorem}
The complex number $-\frac{1}{2} +  \frac{\sqrt3}{2} i$ is an eigenvalue of a  matrix $A \in \Omega_4$ if and only if $A$ is a permutation matrix corresponds to the permutation of cycle type $(3)+(1)$.
\end{theorem}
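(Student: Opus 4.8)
The plan is to treat the two implications separately, the ``if'' direction being immediate and the ``only if'' direction carrying the content. For ``if'': if $A$ is the permutation matrix of a permutation of cycle type $(3)+(1)$, then its characteristic polynomial factors as $(x-1)(x^{3}-1)=(x-1)^{2}(x^{2}+x+1)$, so $\Lambda(A)=\{1,1,-\tfrac{1}{2}+\tfrac{\sqrt3}{2}i,-\tfrac{1}{2}-\tfrac{\sqrt3}{2}i\}$, and in particular $-\tfrac{1}{2}+\tfrac{\sqrt3}{2}i$ is an eigenvalue of $A$.

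For ``only if'', write $\zeta=-\tfrac{1}{2}+\tfrac{\sqrt3}{2}i=e^{2\pi i/3}$ and suppose $\zeta\in\Lambda(A)$ for some $A\in\Omega_4$. First I would locate the whole spectrum: since $A$ is real, $\bar\zeta\in\Lambda(A)$; since $A$ is doubly stochastic, $1\in\Lambda(A)$ by Theorem \ref{PF}; and the fourth eigenvalue $\lambda$ is then forced to be real (non-real eigenvalues of a real matrix occur in conjugate pairs and the pair $\zeta,\bar\zeta$ is already used), with $|\lambda|\le 1$ by Theorem \ref{PF} and $\lambda=\mathrm{tr}(A)-(1+\zeta+\bar\zeta)=\mathrm{tr}(A)\ge 0$ because the diagonal entries of a doubly stochastic matrix are nonnegative; hence $\lambda\in[0,1]$. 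The argument then splits on reducibility of $A$.

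If $A$ is irreducible: the eigenvalues $1,\zeta,\bar\zeta$ all lie on $|z|=1$, so if $\lambda<1$ then $A$ has exactly three eigenvalues of unit modulus, and Theorem \ref{t8} would force $3\mid 4$, which is false; hence $\lambda=1$, every eigenvalue of $A$ lies on the unit circle, and Theorem \ref{t3} makes $A$ a permutation matrix. But an irreducible $4\times 4$ permutation matrix is a single $4$-cycle, whose spectrum $\{1,i,-1,-i\}$ does not contain $\zeta$, a contradiction; so $A$ must be reducible. Being doubly stochastic, $A$ is then permutationally similar to a direct sum of irreducible doubly stochastic blocks (the strictly lower block in the block-triangular form vanishes because its rows and columns are already filled to sum $1$ by the diagonal blocks). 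Now $\zeta$ lies in the spectrum of one block $B$, which, being real and irreducible, also has $\bar\zeta$ and $1$ (Theorem \ref{PF}) among its eigenvalues, so $B$ is at least $3\times 3$; since the block sizes sum to $4$ and $B$ is a proper block, $B$ is exactly $3\times 3$ with spectrum $\{1,\zeta,\bar\zeta\}$ and the complementary block is $[1]$. All eigenvalues of $B$ lie on $|z|=1$, so Theorem \ref{t3} makes $B$ a $3\times 3$ permutation matrix, and the only such matrix with $\zeta$ as an eigenvalue is the $3$-cycle. Hence $A$ is permutationally similar to the direct sum of a $3$-cycle and a fixed point, i.e.\ $A$ is a permutation matrix of cycle type $(3)+(1)$.

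The step I expect to demand the most care is the reducible case: one must justify that a reducible doubly stochastic matrix genuinely splits as a block-diagonal sum (so that its spectrum is the multiset union of the blocks' spectra), pin down that the complex eigenvalue $\zeta$ can only come from a block of size at least $3$, and argue that a $3\times 3$ doubly stochastic matrix having a primitive cube root of unity in its spectrum is the cyclic permutation matrix, all of which rest on double stochasticity together with Theorems \ref{PF} and \ref{t3}. A subsidiary point worth stating explicitly is that the fourth eigenvalue is real and nonnegative, since this is what makes the cycle-type comparison at the end conclusive.
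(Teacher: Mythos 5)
Your proposal is correct and follows essentially the same route as the paper: use Perron--Frobenius and the conjugate pair to pin down the spectrum as $\{1,\lambda,\zeta,\bar\zeta\}$ with $\lambda=\mathrm{tr}(A)\ge 0$, rule out irreducibility via Theorem \ref{t8}, and then use the block decomposition of a reducible doubly stochastic matrix together with Theorem \ref{t3} to force a $3$-cycle plus a fixed point. Your write-up is in fact more careful than the paper's (which compresses the irreducible case into ``the other eigenvalue would be $-1$'' and asserts the reducible spectrum without detailing the block sizes), but no new idea is involved.
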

\noindent \pf Let $-\frac{1}{2} +  \frac{\sqrt3}{2} i$ be an eigenvalue of a DS matrix $D$ of order $4$. Then $D$ has three eigenvalues $1, -\frac{1}{2} +  \frac{\sqrt3}{2} i$ and $-\frac{1}{2} -  \frac{\sqrt3}{2} i$ of maximum modulus. If possible let $D$ be irreducible. Then, by Theorem \ref{t8}, other  eigenvalue should be equals to  $-1$.  Since trace of $D$ is non negative, $-1$ is not an eigenvalue of $D$. So  $D$ is reducible and the spectrum is $\left\{1, 1, -\frac{1}{2} +  \frac{\sqrt3}{2}i, -\frac{1}{2} -  \frac{\sqrt3}{2} i\right\}.$ Hence, by Theorem \ref{t3}, $D$ is a permutation matrix similar to
$\left(
  \begin{array}{c|c}
    P_{\sigma} & 0 \\
    \hline
    0 & 1 \\
  \end{array}
\right)$, where $\sigma \in S_3$ is of cycle type $(3)$. The proof of the converse part is easy.$\hfill{\square}$

From the above theorem it is clear that $-\frac{1}{2} +  \frac{\sqrt3}{2} i$ is an eigenvalue of a matrix from $PDS_4.$
In the next theorem, we shall show that the   $\omega_4 \setminus \Lambda(\pds_4)$ contains the  line segment $-\frac{1}{2} +  y i$  for $\frac{1}{2} < y < \frac{\sqrt3}{2}$(red colored vertical line in Figure \ref{fig1}).
\begin{theorem} \label{theorem:line1}
 Any point on the line segment $-\frac{1}{2} +  y i,$ for  $\frac{1}{2} < y < \frac{\sqrt3}{2}$, can not be an eigenvalue of any  PDS matrix of order $4$.
  \end{theorem}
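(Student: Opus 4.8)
The plan is to assume, toward a contradiction, that some $A\in\pds_4$ has $-\frac12+yi$ as an eigenvalue for a fixed $y\in(\frac12,\frac{\sqrt3}{2})$. Since $A$ is real and doubly stochastic, $-\frac12-yi$ is also an eigenvalue and $1$ is an eigenvalue (Theorem \ref{PF}), so the spectrum is $\{1,\mu,-\frac12+yi,-\frac12-yi\}$ with $\mu\in\R$; from $\mathrm{tr}(A)=1+\mu-1=\mu\ge 0$ and Theorem \ref{PF} we get $\mu\in[0,1]$, and if $\mu=1$ then Theorem \ref{t1} already yields the contradiction (here $\sigma^2+\tau^2=\frac14+y^2<1$ and $\tau\neq 0$), so we may assume $\mu\in[0,1)$. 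The characteristic polynomial of $A$ then factors as $(x-1)(x-\mu)\bigl(x^2+x+a\bigr)$ with $a:=\frac14+y^2$, and the hypothesis $\frac12<y<\frac{\sqrt3}{2}$ is exactly $a\in(\frac12,1)$. By Theorems \ref{thm:norepeat} and \ref{thm:repeat}, $A$ belongs to one of the cogredient classes $\C_1,\dots,\C_{37}$, and it suffices to rule out each of them.

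Classes with real spectrum are dismissed immediately. For each class whose conjugate pair of complex eigenvalues has been given in closed form above, I would impose that its real part equal $-\frac12$: in every case this is either infeasible (it forces some $c_i<0$, as for $\C_{32},\C_{34}$) or it pins the free parameter(s) to a boundary value at which the imaginary part has modulus at most $\frac12$. For instance $\Lambda(\C_1)=\Pi_4$ contains no point $-\frac12+yi$ with $y>\frac12$; the complex eigenvalues of $\C_2$ are purely imaginary; the complex eigenvalue of $\C_4$ is $\sigma(1+\sqrt3\,i)$ with $|\sigma|\le\frac12$, so real part $-\frac12$ forces imaginary part $\mp\frac{\sqrt3}{2}$, an endpoint; and the one-parameter formulas for $\C_9,\C_{11},\C_{17},\C_{19},\C_{21},\C_{26},\C_{30},\C_{31},\C_{37}$ confine the real part of the complex eigenvalue to an interval that meets $-\frac12$ only at an endpoint with imaginary part of modulus $\le\frac12$. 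Hence none of these classes realizes $-\frac12+yi$ with $y\in(\frac12,\frac{\sqrt3}{2})$.

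There remain $\C_3$ (for which only a partial parametrization of the complex eigenvalue is available) and the type-$\T_3$ classes $\C_{15},\C_{16},\C_{20},\C_{23},\C_{25}$ (for which only the quartic characteristic polynomial $\mathcal{P}_i(x)$ is at hand). For these I would work directly with the coefficients: since $\mathcal{P}_i$ is real, $-\frac12+yi$ is a root if and only if $x^2+x+a$ divides $\mathcal{P}_i$, i.e.\ if and only if $\mathcal{P}_i(x)=(x^2+x+a)\bigl(x^2-(1+\mu)x+\mu\bigr)$. Comparing the four coefficients: the coefficient of $x^3$ fixes $\mu$ as the trace (a prescribed expression in the entries $c_i$), the coefficient of $x^2$ expresses $a$ as a prescribed polynomial in the $c_i$, and the coefficients of $x$ and of $1$ give two further polynomial equations in the $c_i$. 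Solving this system together with $c_i\ge 0$, $\sum_i c_i=1$ and $\mu\in[0,1)$, I expect every admissible solution to force $a\notin(\frac12,1)$ --- equivalently $y\notin(\frac12,\frac{\sqrt3}{2})$ --- which completes the contradiction.

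I expect this last elimination to be the only real obstacle. The coefficient of $x^2$ alone --- equivalently the trace together with $\mathrm{tr}(A^2)$ --- is far too weak, since it is readily satisfied with $a\in(\frac12,1)$; so one must carry the $x$ and constant-term equations along and verify that together they are incompatible with nonnegativity of the entries. The case $\C_3$, carrying four genuine parameters, is the most laborious, while the one-parameter type-$\T_3$ families reduce to checking that a single real algebraic equation in $c_1\in[\frac16,\frac13]$ --- the resultant of the two remainder conditions --- has no root yielding $a\in(\frac12,1)$.
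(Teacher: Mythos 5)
Your overall strategy coincides with the paper's: reduce to the spectrum $\{1,\mu,-\tfrac12\pm yi\}$ with $\mu\in[0,1)$, invoke Theorem \ref{t1} to kill $\mu=1$, run through the cogredient classes $\C_1,\dots,\C_{37}$, dispose of the classes whose non-real eigenvalues are known in closed form by noting their real parts cannot equal $-\tfrac12$ for $y$ strictly between $\tfrac12$ and $\tfrac{\sqrt3}{2}$, and then handle $\C_3$ and the type-$\T_3$ classes $\C_{15},\C_{16},\C_{20},\C_{23},\C_{25}$ by matching the coefficients of $\mathcal{P}_i(x)$ against $(x^2+x+a)(x^2-(1+\mu)x+\mu)$. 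The easy eliminations you sketch are correct and checkable (e.g.\ for $\C_{32}$ and $\C_{34}$ the real part $-\tfrac12$ does force a negative entry, and for $\C_{30},\C_{31},\C_{37}$ it pins the parameters to a corner where the imaginary part has modulus exactly $\tfrac12$).

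The genuine gap is that you stop exactly where the proof begins in earnest. For $\C_3$ and the five $\T_3$ classes you write that you ``expect'' every admissible solution of the coefficient system to force $a\notin(\tfrac12,1)$ --- but that expectation \emph{is} the theorem for those classes, and nothing in your argument establishes it. In the paper, the $\C_3$ case alone requires: solving the linear part of the system for $c_1,c_2,c_4$ in terms of $a,y,c_3$; extracting the nonnegativity window $0\le c_3\le\frac{3/4-y^2}{2(1-a)}$ and the range $\frac{4y^2+1}{4}<a\le\frac12+\frac{\sqrt{8y^2-2}}{4}$; reducing the two remaining coefficient equations to a single quadratic $h(a,y,c_3)=0$ in $c_3$; and then a multi-step monotonicity analysis (positivity of an auxiliary cubic $p(a)$ on the whole admissible $a$-interval, for every $y$ in range) to show that one root of that quadratic is negative and the other exceeds the upper bound $\frac{3/4-y^2}{2(1-a)}$. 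The one-parameter $\T_3$ cases are shorter but still each require an explicit sign argument on a cubic in $a$ or $c_1$ over a specific interval. None of this is routine bookkeeping that can be waved through; until those eliminations are actually carried out, the proposal is an accurate plan of attack rather than a proof.
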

  \noindent \pf
  Let $D_y\in \pds_4$ with $-\frac{1}{2} +  y i, \frac{1}{2} < y < \frac{\sqrt3}{2},$ as an eigenvalue. Let the spectrum of $D_y$ be
 $\{1, a, -\frac{1}{2} +  y i, -\frac{1}{2} -  y i \}$. By Theorem \ref{t1}, $a = 1$ is not possible.  If $a = 0$, then $D_y$ is a trace zero doubly stochastic matrix, and hence by Theorem \ref{t7} $-\frac{1}{2} +  y i$ should lie in $\Pi_4^{0}$, which is not possible. Thus, we can assume that the spectrum of $D_y$ is $\{1, a, -\frac{1}{2} +  y i, -\frac{1}{2} -  y i \}$, where $0 < a < 1$.
Then the characteristic polynomial of $D_y$ is
 \begin{equation}
 \begin{aligned}
 &q_y(x)=x^4 - a x^3 + \left(y^2 - \frac{3}{4}\right) x^2 + \left(a-(a+1)\left(\frac{1}{4} + y^2\right)\right) x + a \left(\frac{1}{4} + y^2\right),
  \end{aligned}
  \end{equation}
where $0 <a < 1,$ and $ \frac{1}{2} < y < \frac{\sqrt3}{2}.$ Observe that  $-\frac{1}{2} + y i$ for $\frac{1}{2} < y < \frac{\sqrt3}{2},$ is not an eigenvalue of a generic matrix which represents $\C_1$ and $\C_2$. If $D_y$ is in $\C_4$, then $a=1$, which is not possible. Also note that matrix classes $\C_i$ from $\T_4,\T_3$ and $\T_2$ types except the classes $\C_{15},\C_{16},\C_{20},\C_{23},$ and $\C_{25}$ have eigenvalues explicitly in the square ${\bf{abcd}}$. Then the following cases arise. We emphasize that in all the expressions involving $y$ below, the range of $y$ is given by $\frac{1}{2} < y < \frac{\sqrt3}{2}.$

\textbf{Case I}
Suppose $D_y \in \C_3.$ Then by comparing the coefficients of $x^3, x^2, x, 1$ in the polynomials  $q_y(x)$ and $\mathcal{P}_3(x)$, we obtain
 \begin{align}
 c_1 + 2 c_2 + c_3 &=a,\label{e26}\\
 2 c_1 c_2-2 c_1 c_4+2 c_2 c_3-2 c_3 c_4 &= y^2 - \frac{3}{4},\label{e27}\\
 %\begin{split}
 -c_1^3+c_1^2 c_3+2 c_1^2 c_4-c_1 c_2^2-4 c_1 c_2 c_3+2 c_1 c_2 c_4+c_1 c_3^2 \nonumber \\
 -c_1 c_4^2+2 c_2^3-c_2^2 c_3+2 c_2 c_3 c_4-2 c_2 c_4^2-c_3^3+2 c_3^2 c_4-c_3 c_4^2 & = a-(a+1)\left(\frac{1}{4} + y^2\right),
 %\end{split}
 \label{e28}\\
 c_1^4-4 c_1^2 c_2 c_4-2 c_1^2 c_3^2+4 c_1 c_2^2 c_3+4 c_1 c_3 c_4^2-c_2^4+2 c_2^2 c_4^2-4 c_2 c_3^2 c_4+c_3^4-c_4^4 &= a \left(\frac{1}{4} + y^2\right),\label{e29}
 \end{align}
   where $0 <a < 1$,
 and
 \begin{equation} \label{i1}
 \frac{1}{2} < y < \frac{\sqrt3}{2}.
 \end{equation}
 Also, we have
 \begin{equation} \label{e30}
 c_1 + c_2 + c_3 + c_4 = 1,~c_i \geq 0~ \forall ~i.
 \end{equation}
 Solving (\ref{e26}) and (\ref{e30}) it follows that
 \begin{equation} \label{e31}
\bmatrix{
c_1 \\
c_2 \\
c_3 \\
c_4
}
=
\bmatrix{
-c_3 - 2 c_4 + 2 - a \\
c_4 + a - 1 \\
c_3 \\
c_4
}, ~\mathrm{and}
 \end{equation}
 \begin{equation} \label{e32}
 1 > c_4 \geq (1-a) > 0.
 \end{equation}
Using (\ref{e27}) and (\ref{e31}),  we have
 \begin{align}
 c_4 &= 1-\frac{1}{2} a-\frac{\left(\frac{3}{4} - y^2\right)}{4 (1-a)},\label{e34}\\
 c_2 &= \frac{1}{2} a- \frac{\left(\frac{3}{4} - y^2\right)}{4 (1-a)},~\mathrm{and}\label{e35}\\
 c_1 &= \frac{\left(\frac{3}{4} - y^2\right)}{2 (1-a)}-c_3.\label{e36}
 \end{align}
Thus (\ref{e32}), (\ref{e34}) and (\ref{e36}) yield
\begin{equation} \label{e33}
 1 > a \geq \frac{\frac{3}{4}-y^2}{2 (1-a)}\geq c_3 \geq 0,
 \end{equation}
 and
 \begin{equation} \label{e37}
 \frac{1}{2} - \frac{\sqrt{8 y^2 - 2}}{4} \leq a \leq \frac{1}{2} + \frac{\sqrt{8 y^2-2}}{4}.
 \end{equation}
 By substituting the values of $c_1, c_2, c_4$ in (\ref{e28}) and in (\ref{e29}) we obtain,
 $h(a,y, c_3) =0$ where
 \begin{align}\label{e38}
 \begin{split}
 h(a,y, c_3) =& \frac{1}{256(a-1)^3}(-1024 a^2 c_3^2 y^2+512 a c_3y^4 - 64 y^6 + 256 a^5 + 1024 a^3 c_3^2+ 256 a^3 y^2  \\
                &  - 512 a^2 c_3 y^2+ 2048 a c_3^2 y^2 + 64 a y^4 - 512 c_3 y^4 - 1024 a^4 - 2304 a^2 c_3^2 - 768 a^2 y^2  \\
                &  + 256 a c_3 y^2- 1024 c_3^2 y^2+ 80 y^4 + 1600 a^3 + 384 a^2 c_3 + 1536 a c_3^2 + 672 a y^2    \\
                &  + 256 c_3 y^2- 1216 a^2- 480 a c_3-256 c_3^2-268 y^2 + 484 a + 96 c_3 - 73).
                \end{split}
                \end{align}
                That is,  $h(a,y,c_3)=0$ for some $(a,y, c_3)$ which satisfies $(\ref{i1})$, $(\ref{e33})$ and $(\ref{e37}).$ Then the roots $c_3^{(1)}(a,y)$ and $c_3^{(2)}(a,y)$ of the equation  $ h(a,y, c_3)=0$ (considered as a quadratic equation in $c_3$) are given by:
\begin{eqnarray} \label{a1}
c_3^{(1)}(a,y)=\frac{1}{16} \frac{(3-4 y^2) (-4 y^2+4 a-1) - 8 \sqrt{(1-a)^3 (4 a^2+4 y^2-4 a+1) (-4y^2+4a-1)}}{(-4y^2+4a-1)(1-a)}
\end{eqnarray}
and
\begin{eqnarray} \label{a2}
c_3^{(2)}(a,y)=\frac{1}{16} \frac{(3-4 y^2) (-4 y^2+4 a-1) + 8 \sqrt{(1-a)^3 (4 a^2+4 y^2-4 a+1) (-4y^2+4a-1)}}{(-4y^2+4a-1)(1-a)}.
\end{eqnarray}
Since $(1-a)$ and $(4 a^2+4 y^2-4 a+1)=(2a-1)^2 + 4 y^2$ are positive for all $a<1$, so  $c_3^{(1)}(a,y)$ and $c_3^{(2)}(a,y)$  are real if and only if $a \geq \frac{4y^2+1}{4}$. If $a=\frac{4y^2+1}{4},$  then $h(\frac{4y^2+1}{4},y, c_3)=\frac{(1+4y^2)^2}{16}$, which is positive for any value of $y$. Hence, for the real roots, we have $a>\frac{4y^2+1}{4}.$ Now,
\begin{equation*}
\frac{4y^2+1}{4}-\left(\frac{1}{2}+\frac{\sqrt{8y^2-2}}{4}\right)=\frac{\sqrt{8y^2-2}}{8} \left(\sqrt{8y^2-2}-2 \right) <0
\end{equation*}
and
\begin{equation*}
\frac{4y^2+1}{4}-\left(\frac{1}{2}-\frac{\sqrt{8y^2-2}}{4}\right)=\frac{\sqrt{8y^2-2}}{8} \left(\sqrt{8y^2-2}+2 \right) >0,
\end{equation*}
for $\frac{1}{2} < y < \frac{\sqrt3}{2}.$
%By $(\ref{e37})$, we obtain
%$$ \frac{1}{2}-\frac{\sqrt{8y^2-2}}{4} \leq \frac{4y^2+1}{4}<a \leq \frac{1}{2}+\frac{\sqrt{8y^2-2}}{4},$$ and
Hence by $(\ref{e37})$,  we have,
\begin{eqnarray} \label{i2}
\frac{4y^2+1}{4}<a \leq \frac{1}{2}+\frac{\sqrt{8y^2-2}}{4}.
\end{eqnarray}

Now,
\begin{align*}
&\frac{d}{da}\{c_3^{(1)}(a,y)\}=\frac{4 (1-a)^3 p(a)+(3-4y^2)(-4y^2+4a-1)\sqrt{(1-a)^3(4a^2+4y^2-4a+1)(-4y^2+4a-1)}}{16(a-1)^2(-4y^2+4a-1)\sqrt{(1-a)^3(4a^2+4y^2-4a+1)(-4y^2+4a-1)}},
\end{align*}
for each fixed $y$, where $p(a)=(-48a^2y^2-16y^4+32a^3+64ay^2-44a^2-8y^2+16a-1).$
Hence $\frac{2}{3}$ and $\frac{1+4y^2}{4}$ are the zeros of $p'(a),$ where
$$
\frac{d}{da}p(a)=8 (3a-2)(-4y^2+4a-1).
$$
 Now
 \begin{align*}
 p''\left(\frac{1+4y^2}{4}\right)=8(12y^2-5)~~~&>0~\mathrm{if}~y>\sqrt{\frac{5}{12}}\\
                                     &<0~\mathrm{if}~y<\sqrt{\frac{5}{12}},
 \end{align*}
 and
 \begin{align*}
  p''\left(\frac{2}{3}\right)=-8(12y^2-5)~~~&>0~\mathrm{if}~y<\sqrt{\frac{5}{12}}\\
                                     &<0~\mathrm{if}~y>\sqrt{\frac{5}{12}}.
 \end{align*}
 So,  if $y > \sqrt{\frac{5}{12}}$, then $\frac{1+4y^2}{4}$ is the local minimum and $\frac{2}{3}$ is the local maximum, and if $y<\sqrt{\frac{5}{12}}$, then $\frac{1+4y^2}{4}$ is the local maximum, and $\frac{2}{3}$ is the local minimum.
 It can be shown that at $y=\sqrt{\frac{5}{12}},p(a)$ is a positive function  for all $a \geq \frac{2}{3}.$

 Note that at $y=\sqrt{\frac{5}{12}},$ $p(a)$ becomes $p_1(a)=-64a^2-\frac{64}{9}+32 a^3+\frac{128}{3}a.$ Now we consider the function $p_1(a)$ over $\frac{2}{3}\leq a.$ It is easy to see that $p_1'''(a)=192,$ and hence $p_1''(a)$ is an increasing function so that $p_1''(a)\geq p_1''(\frac{2}{3})=0$ for all $a\geq \frac{2}{3}.$ Now $p_1'(a)\geq p_1'(\frac{2}{3})=0,$ since $p_1'(a)$ is again an increasing function of $a\geq \frac{2}{3}.$ At last we have $p_1(a)$ is an increasing function of $a$ for all $a \geq \frac{2}{3},$ and hence $p_1(a)\geq p_1(\frac{2}{3})=\frac{64}{27}.$ Which yields $p_1(a)$ is a positive function for all $a\geq \frac{2}{3}.$

 Since $p\left(\frac{1+4y^2}{4}\right)=\left(\frac{3}{4}-y^2\right)(4y^2+1)^2 >0$ and $ p\left(\frac{2}{3}\right)=\left( 4y^2-\frac{5}{3}+\frac{8}{3\sqrt3}\right)\left( \frac{5}{3}+\frac{8}{3\sqrt3}-4y^2\right)>0,$ the polynomial $p(a)$ has positive local minimum value for each $y\in\left(\frac{1}{2},\frac{\sqrt3}{2}\right).$ At $a=\frac{1}{2}+\frac{\sqrt{8y^2-2}}{4},$ we have  $$p\left(\frac{1}{2}+\frac{\sqrt{8y^2-2}}{4}\right)=20 y^2+8y^2\sqrt{8y^2-2}-40y^4-2\sqrt{8y^2-2}-\frac{1}{2}.$$
 % and $p\left(\frac{1+4y^2}{4}\right)>0,$

  Let $p_2(y)=20 y^2+8y^2\sqrt{8y^2-2}-40y^4-2\sqrt{8y^2-2}-\frac{1}{2}.$ Now $p_2'(y)=-\frac{8y}{\sqrt{8y^2-1}}p_3(y),$ where $p_3(y)=(4y^2-1)(5\sqrt{8y^2-2}-6).$ Hence
  \begin{align*}
   p_3(y) & \leq 0 ~\mathrm{for}~\frac{1}{2} \leq y < \frac{\sqrt{43}}{10}\\
          & >0 ~\mathrm{for}~ \frac{\sqrt{43}}{10} < y \leq \frac{\sqrt3}{2}.
  \end{align*}
  So that
  \begin{align*}
   p_2'(y) & \geq 0 ~\mathrm{for}~\frac{1}{2} \leq y < \frac{\sqrt{43}}{10}\\
          & <0 ~\mathrm{for}~ \frac{\sqrt{43}}{10} < y \leq \frac{\sqrt3}{2}.
  \end{align*}
  Now $p_2(y)\geq p_2(\frac{1}{2})=2~\mathrm{for}~\frac{1}{2}\leq y  < \frac{\sqrt{43}}{10}$ and $p_2(y) > p_2(\frac{\sqrt3}{2})=0~\mathrm{for}~\frac{\sqrt{43}}{10} < y < \frac{\sqrt3}{2}.$ Also $p_2(\frac{\sqrt{43}}{10})=\frac{304}{125},$ and hence $p_2(y)>0~\forall~y\in (\frac{1}{2},\frac{\sqrt3 }{2}).$

So we obtain $p\left(\frac{1+4y^2}{4}\right)>0$ and $p\left(\frac{1}{2}+\frac{\sqrt{8y^2-2}}{4}\right)>0.$
 Hence the global minimum of $p(a)$ is positive in the compact set  $\frac{4y^2+1}{4}\leq a \leq \frac{1}{2}+\frac{\sqrt{8y^2-2}}{4}$ for each fixed $y$ which satisfies $(\ref{i1}).$
 Thus $ c_3^{(1)}(a,y)$ is a strictly increasing function for each fixed $y$. Now,
 \begin{align}\label{e77}
 c_3^{(1)}\left(a,y\right)=&\frac{-\sqrt{(-4 y^2+4 a-1)}}{16(-4y^2+4a-1)(1-a)}{\left((4 y^2-3) \sqrt{(-4 y^2+4 a-1)} + 8 \sqrt{(1-a)^3 (4 a^2+4 y^2-4 a+1)}\right)}.
 \end{align}
 Let $g(a)={\left((4 y^2-3) \sqrt{(-4 y^2+4 a-1)} + 8 \sqrt{(1-a)^3 (4 a^2+4 y^2-4 a+1)}\right)}.$ Then
 \begin{align*}
 g\left(\frac{1}{2}+\frac{\sqrt{8y^2-2}}{4}\right)=\left(4y^2-3\right)\sqrt{-4y^2+1+\sqrt{8y^2-2}}+\frac{1}{\sqrt 2}\sqrt{\left(-2+\sqrt{8y^2-2}\right)^3\left(1-12y^2\right)}
 \end{align*}
 is a continuous function of $y$ satisfying (\ref{i1}), and has only zeros at $y=\pm\frac{\sqrt 3}{2}.$ Hence for $\frac{1}{2} < y < \frac{\sqrt3}{2},$ $g\left(\frac{1}{2}+\frac{\sqrt{8y^2-2}}{4}\right)$ is either positive or negative. Since $g(\frac{2}{3})=0.0285$ (approx), hence $g(y)>0$ for all $y$ satisfying (\ref{i1}).
 From (\ref{e77}), it is clear that $c_3^{(1)}\left(\frac{1}{2}+\frac{\sqrt{8y^2-2}}{4},y\right)$ is negative, which implies that $c_3^{(1)}(a,y) < 0$, for all $a$ in  (\ref{i2}). Which is not possible as $c_3 \geq 0.$

Clearly, $c_3^{(2)}(a,y) > 0$ for $a > \frac{4y^2+1}{4}.$ Now, $c_3^{(2)}(a,y)-\frac{\frac{3}{4}-y^2}{2(1-a)}=-c_3^{(1)}(a,y),$
 which implies $c_3^{(2)}(a,y) > \frac{\frac{3}{4}-y^2}{2(1-a)}$, for all $a$ in  (\ref{i2}). Which is again not possible since $c_3 \leq \frac{\frac{3}{4}-y^2}{2(1-a)}.$
Hence, there does not exist any triplet $(a,y,c_3)$ satisfying (\ref{i1}), (\ref{e33}) and (\ref{e37}) such that $h(a,y,c_3)=0$.

Now we show that $-\frac{1}{2}+iy,\frac{1}{2}<y<\frac{\sqrt3}{2}$ is not an eigenvalue of a matrix which belongs to the classes $\C_{15},\C_{16},\C_{20},\C_{23}$ and $\C_{25}$.

\textbf{Case II:}
If possible let $D_y$ be a generic matrix which represents $\C_{15}.$ Then we compare the coefficients of $x^3, x^2, x, 1$ in the polynomials  $q_y(x)$ and $\mathcal{P}_{15}(x)$ and obtain
 \begin{align*}
2 c_1-\frac{3}{2} &=-a, \\
\frac{1}{2}-2 c_1 &=y^2 - \frac{3}{4}, \\
18 c_1-72 c_1^2-\frac{3}{2}+96 c_1^3 &=a-(a+1)\left(\frac{1}{4} + y^2\right), \\
72 c_1^2-96 c_1^3-18 c_1+\frac{3}{2} &=a \left(\frac{1}{4} + y^2\right),~\mathrm{where}~ \frac{1}{6}\leq c_1 \leq \frac{1}{3}.
\end{align*}
The first two equations of the above system give $c_1= \frac{3}{4}-\frac{1}{2}a$ and $a=y^2+\frac{1}{4},$ where
%c_2=\frac{3}{2}a -\frac{5}{4},c_3= \frac{1}{2} a-\frac{1}{4},c_4= \frac{7}{4}-\frac{3}{2}a,
$\frac{5}{6} \leq a < 1.$ Now the left hand side of third equation yields $12(1-a)^3(>0),$ whereas the right hand side gives $-a^2(<0),$ which is not possible.

\textbf{Case III:}
Let $D_y \in \C_{16}.$ Then comparing the coefficients of $x^3, x^2, x, 1$ in the polynomials  $q_y(x)$ and $\mathcal{P}_{16}(x)$ we obtain
\begin{align*}
2 c_1-\frac{3}{2} &=-a,\\
-\frac{1}{2}+6c_1-16 c_1^2 &=y^2 - \frac{3}{4}, \\
-26c_1+88c_1^2+\frac{5}{2}-96 c_1^3 &=a-(a+1)\left(\frac{1}{4} + y^2\right),\\
-72 c_1^2+96 c_1^3+18 c_1-\frac{3}{2} &=a \left(\frac{1}{4} + y^2\right),~\mathrm{where}~ \frac{1}{6}\leq c_1 \leq \frac{1}{3}.
\end{align*}
So, we have  $c_1= \frac{3}{4}-\frac{1}{2}a,$ where $\frac{5}{6} \leq a < 1.$ The second equation gives $-4 a^2+9 a-4=y^2 + \frac{1}{4}.$ Then, the left hand side of the third equation yields $12(a-\frac{2}{3})(a-1)^2(>0)$ and right hand side gives $4a^3-5a^2-4a+4=4(1-a)(1-a^2)-a^2\leq \frac{4}{6}(1-a^2)-a^2<0$ for all $a \in \left[ \frac{5}{6}, 1\right).$
 Hence, a contradiction.

\textbf{Case IV:}
  Assigning $D_y$ to be in $\C_{20}$ and comparing the coefficients of $x^3, x^2, x, 1$ in the polynomials  $q_y(x)$ and $\mathcal{P}_{20}(x)$, we obtain
 \begin{align*}
 -\frac{5}{2}+6c_1 & =-a,\\
 -6c_1+\frac{3}{2} & = y^2 - \frac{3}{4},\\
 72c_1^2-96c_1^3-18c_1+\frac{3}{2} & =a-(a+1)\left(\frac{1}{4} + y^2\right),\\
 96c_1^3-72c_1^2+18c_1-\frac{3}{2} & =a \left(\frac{1}{4} + y^2\right), \mathrm{where}~ \frac{1}{6}\leq c_1 \leq \frac{1}{3}.
 \end{align*}
 So, we have $c_1= \frac{5}{12}-\frac{1}{6}a,$  where $\frac{1}{2} \leq a < 1.$ Second equation yields $a=y^2+\frac{1}{4}.$ From third equation we have
 \begin{align*}
 4 a^3-3 a^2+12 a-4=0.
 \end{align*}
 However,
 $$
 4a^3-3a^2+12a-4=4a^2(a-1)+a^2+12 a-4
                 \geq 4(a-1)+a^2+12a-4
                 \geq \frac{1}{4},
 $$
 for $\frac{1}{2} \leq a < 1$.
 Hence $4a^3-3a^2+12a-4\neq0, $ for $a \in [\frac{1}{2},1).$

\textbf{Case V:}
  Let $D_y$ represent $\C_{23}.$ Then, by comparing the coefficients of $x^3, x^2, x, 1$ in the polynomials  $q_y(x)$ and $\mathcal{P}_{23}(x)$, we have the following:
  \begin{align*}
  4c_1&=a \\
  8c_1^2-\frac{1}{2} & =y^2 - \frac{3}{4}\\
  96c_1^3-80c_1^2+22c_1-2 & =a-(a+1)\left(\frac{1}{4} + y^2\right)\\
  72c_1^2-96c_1^3-18c_1+\frac{3}{2} & =a \left(\frac{1}{4} + y^2\right),~\mathrm{where}~ \frac{1}{6}\leq c_1 \leq \frac{1}{3}.
   \end{align*}
  Hence, we have $c_1 = \frac{1}{4}a,$  where $\frac{2}{3} \leq a < 1.$ From the second equation we will have $y^2+\frac{1}{4}=\frac{1}{2}(1+a^2).$
  Using these, third equation becomes
 \begin{align*}
 -4a^3+9a^2-10a+3=0.
 \end{align*}
 However,
 $$
 -4a^3+9a^2-10a+3=4a^2(1-a)+5(1-a)^2-2
                 \leq \frac{4}{3}a^2-\frac{13}{9}
                 < 0 ,
 $$
for $\frac{2}{3} \leq a < 1$. So, $-4a^3+9a^2-10a+3 \neq 0,$ for $\frac{2}{3} \leq a < 1. $

 \textbf{Case VI:}
 Let $D_y \in \C_{25},$ then by comparing the coefficients of $x^3$ in the polynomials  $q_y(x)$ and $\mathcal{P}_{25}(x)$, we get
  $a=1.$ But this is not our case.

 Hence $-\frac{1}{2} +  y i,$ where $\frac{1}{2} < y < \frac{\sqrt 3}{2}$ is not an eigenvalue of the matrices in $\bigcup_{i=1}^{37}\C_i,$ and the proof follows. $\hfill{\square}$

 We emphasize that $-\frac{1}{2} +  y i,\frac{1}{2} < y < \frac{\sqrt 3}{2}$ are eigenvalues of the DS matrices given in (\ref{DS1}) for $\sigma=-\frac{1}{2}$ and $\tau=y.$

 Let $L = \{z = x+ yi : x + \sqrt3 y =1 ~\mbox{and}~ -\frac{1}{2} < x < \frac{1-\sqrt3}{1+\sqrt3} \}$(red colored inclined line in upper half plane of Figure \ref{fig1}). In the next theorem, we show any point on $L$ can be an eigenvalue of $\Omega_4$ but not of $\pds_4.$

 \begin{theorem} \label{theorem:line2}
 	Any point on the line segment $L$  is not an eigenvalue of any PDS matrix of order $4$.
  \end{theorem}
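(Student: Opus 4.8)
The argument parallels the proof of Theorem \ref{theorem:line1}, with the vertical line replaced by $L$ and the defining relation $x+\sqrt3\,y=1$ carried through every computation. Suppose, for contradiction, that some $D\in\pds_4$ has an eigenvalue $z=x+yi$ on $L$, so $y=\tfrac{1-x}{\sqrt3}$ with $-\tfrac12<x<\sqrt3-2$ (note $\sqrt3-2=\tfrac{1-\sqrt3}{1+\sqrt3}$). The three eigenvalues $1,z,\bar z$ being conjugate-closed, the fourth eigenvalue $a$ is real, so $\Lambda(D)=\{1,a,z,\bar z\}$ and the characteristic polynomial of $D$ is
\[
q(t)=(t-1)(t-a)\bigl(t^{2}-2xt+m\bigr),\qquad m=x^{2}+y^{2}=\frac{4x^{2}-2x+1}{3}.
\]
On $L$ one has $m<1$ (equivalently $-\tfrac12<x<1$), $\mathrm{Re}(z)=x<0$, and $\mathrm{Im}(z)=\tfrac{1-x}{\sqrt3}\in(\sqrt3-1,\tfrac{\sqrt3}{2})$, so $\mathrm{Im}(z)>\tfrac12$. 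Then $a=1$ is excluded by Theorem \ref{t1} (here $m<1$ and $z$ is non-real), and $a=-1$ is excluded because it would give $\mathrm{tr}\,D=2x<0$. In contrast to Theorem \ref{theorem:line1}, $a=0$ does not force $\mathrm{tr}\,D=0$, so Theorem \ref{t7} is not directly available; the remaining admissible values of $a$ get ruled out in the per-class analysis below.

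Using $\pds_4=\bigcup_{i=1}^{37}\C_i$, I would dispose of the ``cheap'' classes exactly as in Theorem \ref{theorem:line1}. Since $\Lambda(\C_1)=\Pi_4$ and $L$ lies strictly outside $\Pi_4$ (the inequality $\tfrac{1-x}{\sqrt3}\le x+1$ is equivalent to $x\ge\sqrt3-2$, which fails on $L$), no matrix in $\C_1$ has $z$ as an eigenvalue; the same inequality also gives $z\notin\Pi_4^{0}$, which would matter only if the $a=0$ shortcut were usable. Next, $\Lambda(\C_2)\subseteq[-1,1]\cup\{it:t\in[-1,1]\}$, and $z$ is neither real nor purely imaginary. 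The non-real eigenvalues of $\C_4$ are $\sigma(1\pm\sqrt3\,i)$, and matching one of these to the second-quadrant point $z$ forces $\arg z=\tfrac{2\pi}{3}$, that is $x=-\tfrac12$, which is excluded. Every class with purely real spectrum ($\C_5,\C_6$, all classes in Table \ref{table:eig}, and $\C_{33}$) is ruled out since $z\notin\R$. Finally, every class of type $\T_4,\T_3,\T_2$ other than $\C_{15},\C_{16},\C_{20},\C_{23},\C_{25}$ has its non-real eigenvalues inside the square $\mathbf{abcd}$, where $|\mathrm{Im}|\le\tfrac12<\mathrm{Im}(z)$, so none can equal $z$.

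This leaves exactly the six classes treated by hand in Theorem \ref{theorem:line1}: $\C_3$ and $\C_{15},\C_{16},\C_{20},\C_{23},\C_{25}$. For each of them I would equate the coefficients of $q(t)$ with those of the explicit characteristic polynomial of the generic matrix ($\mathcal{P}_3$, respectively $\mathcal{P}_{15},\mathcal{P}_{16},\mathcal{P}_{20},\mathcal{P}_{23},\mathcal{P}_{25}$), adjoin $\sum c=1$ and $c_i\ge0$, and substitute $y^{2}=\tfrac{(1-x)^{2}}{3}$. For the five one-parameter classes, the $t^{3}$- and $t^{2}$-coefficient equations determine both the free entry $c_{1}$ and $a$ as explicit functions of $x$ on $L$, and substituting into the remaining two equations should yield a sign contradiction of the kind found in Cases II--VI of Theorem \ref{theorem:line1} (often already from the leading two coefficients, which tend to push the parameter or $a$ out of its admissible range). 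The class $\C_3$ is the main obstacle, just as Case I was: the relations $\sum c=1$ and the first two coefficient equations still leave one free entry $c_{3}$, and eliminating the others yields a quadratic $h(a,x,c_{3})=0$ in $c_{3}$ (the analogue of (\ref{e38})), now with genuinely different coefficients since $\mathrm{Re}(z)=x$ enters the $t^{3}$- and $t^{2}$-coefficients of $q(t)$. One then has to show that, within the range of $a$ forced by $c_i\ge0$, the two roots $c_{3}^{(1)},c_{3}^{(2)}$ of this quadratic cannot both satisfy the remaining nonnegativity constraints, the analogue of the interval analysis in (\ref{e33})--(\ref{e37}). I expect this to need the same discriminant-and-monotonicity argument as in Theorem \ref{theorem:line1}: compute the discriminant of $h$ in $c_{3}$, reduce the positivity question to an auxiliary cubic $p(a)$, and bound $p$ on a compact $a$-interval by its endpoint values, here streamlined by the single relation $x+\sqrt3\,y=1$, which makes every quantity a function of $x\in(-\tfrac12,\sqrt3-2)$ alone. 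Once all six classes are excluded, the theorem follows.
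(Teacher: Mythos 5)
Your setup and all of the preliminary reductions are sound and coincide with the paper's: you correctly restrict attention to the classes $\C_3,\C_4,\C_{15},\C_{16},\C_{20},\C_{23},\C_{25}$ (the argument via $\mathrm{Im}(z)=\tfrac{1-x}{\sqrt3}>\tfrac12$ versus the square $\mathbf{abcd}$, the exclusion of $\C_1$ since $L$ lies outside $\Pi_4$, of $\C_2$ and the real-spectrum classes, and your argument for $\C_4$ via $\arg z=\tfrac{2\pi}{3}\Leftrightarrow x=-\tfrac12$ is even a little cleaner than the paper's coefficient comparison). But from that point on the proposal is a plan, not a proof. For the five one-parameter classes you say the remaining equations ``should yield a sign contradiction,'' and for $\C_3$ you say you ``expect'' a discriminant-and-monotonicity argument to work. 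Those computations are precisely the content of the theorem, and the paper spends several pages executing them (Cases II--VII); asserting that they will go through by analogy with Theorem \ref{theorem:line1} does not establish that they do.

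Moreover, the analogy is not as mechanical as you suggest. In the paper's treatment of $\C_3$ for the segment $L$, the case analysis splits on the sign of $a+2r$ (which never arose for the vertical segment, where $a+2\sigma=a-1<0$ automatically), and the contradiction in both subcases is that the quantity under the square root in $c_3^{(1)},c_3^{(2)}$ is negative, i.e.\ the roots are \emph{not real} --- not, as you anticipate, that real roots violate the nonnegativity window as in (\ref{e33})--(\ref{e37}). Likewise $\C_{16}$ and $\C_{23}$ here lead to two branches for $a$ (a quadratic in $a$ rather than a linear relation) and require separate monotonicity arguments for auxiliary sextics in $r$. So the missing work is not a routine transcription of Cases II--VI of Theorem \ref{theorem:line1}; until those six eliminations are actually carried out, the proof is incomplete.
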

 \noindent\pf
 Let $D_r$ be a permutative doubly stochastic matrix of order $4$ such that
  $ \left\{1, a, r + \frac{1-r}{\sqrt 3} i, r - \frac{1-r}{\sqrt 3} i \right\}$ is the spectrum of $D_r$, where $-\frac{1}{2} < r < \frac{1-\sqrt3}{1+\sqrt3}$ and $a < 1.$ Then observe that $D_r$ can only belong to $\C_i,i=3,4,15,16,20,23,25.$ We consider the possibility of $D_r$ to be in this classes and show contradictions. The characteristic polynomial of $D_r$ is
 \begin{align*}
&q(x)= x^4 - (1 + a + 2r) x^3 + \left( a (1+ 2r) + \frac{(1+2r)^2}{3}\right) x^2 - \left( a\frac{(1+2r)^2}{3} + r^2 + \frac{(1-r)^2}{3}\right) x \\
&\hspace{1 cm}+ a \left(r^2 + \frac{(1-r)^2}{3} \right),
~~~~~~~\mbox{where}~-\frac{1}{2} < r < \frac{1-\sqrt3}{1+\sqrt3}.
 \end{align*}

  Next we have the following instances.

 \textbf{Case I:}
 Let $D_r \in \C_4.$ Then, by comparing the coefficients of $\mathcal{P}_4(x)$ and $q(x)$, we have
 \begin{eqnarray}\label{trace1}
 & a+2r=0 ~\mathrm{and}~  a(1+2r)+\frac{(1+2r)^2}{3}=0,
 \end{eqnarray}
which implies that  $r=-\frac{1}{2}, \frac{1}{4}.$
This is not possible.

\textbf{Case II:}
If $D_r \in \C_3$, then, by comparing the coefficients of $\mathcal{P}_3(x)$ and $q(x)$, we have
 \begin{align}
  c_1 + 2 c_2 + c_3 =1+a+2r,\label{e60} \\
 2 c_1 c_2-2 c_1 c_4+2 c_2 c_3-2 c_3 c_4 = a(1+2r) +\frac{(1+2r)^2}{3} ,\label{e61} \\
 -c_1^3+c_1^2 c_3+2 c_1^2 c_4-c_1 c_2^2-4 c_1 c_2 c_3+2 c_1 c_2 c_4+c_1 c_3^2-c_1 c_4^2+2 c_2^3-c_2^2 c_3 \nonumber \\
+ 2 c_2 c_3 c_4-2 c_2 c_4^2-c_3^3+2 c_3^2 c_4-c_3 c_4^2 = -a \frac{(1+2r)^2}{3} - (r^2+\frac{(1-r)^2}{3}), \label{e62} \\
 c_1^4-4 c_1^2 c_2 c_4-2 c_1^2 c_3^2+4 c_1 c_2^2 c_3+4 c_1 c_3 c_4^2-c_2^4
 +2 c_2^2 c_4^2-4 c_2 c_3^2 c_4+c_3^4-c_4^4 = a (r^2+\frac{(1-r)^2}{3}),\label{e63}
 \end{align}
 and
 \begin{equation} \label{e65}
 -\frac{1}{2} < r < \frac{1-\sqrt3}{1+\sqrt3}.
 \end{equation}
 Also, we have \begin{equation} \label{e64}
 c_1 + c_2 + c_3 + c_4 = 1,~c_i \geq 0~ \forall ~i.
 \end{equation}
From  (\ref{e60}) and  (\ref{e64}), we obtain
 \begin{eqnarray} \label{e66}
\bmatrix{
c_1 \\
c_2 \\
c_3 \\
c_4
}
=
\bmatrix{
1-c_3 - 2 c_4 - a-2r \\
c_4 + a +2r \\
c_3 \\
c_4
}.
 \end{eqnarray}

From (\ref{e61}) and (\ref{e66}), it follows that
 \begin{equation} \label{e69}
 c_4 = \frac{1}{2}-\frac{1}{2}a-r -  \frac{a(1+2 r)}{4 (a+2 r)}-\frac{(1+2r)^2}{12 (a+2 r)},
 \end{equation}

 \begin{equation} \label{e70}
 c_2 = \frac{1}{2} + \frac{1}{2} a + r - \frac{a (1+2 r)}{4 (a+2 r)} - \frac{(1+2 r)^2}{12 (a+2 r)},
 \end{equation}
 \begin{equation} \label{e71}
 c_1 = \frac{ a (1 + 2 r)}{2 (a+2 r)} + \frac{(1+2 r)^2}{6 (a+2 r)}-c_3.
 \end{equation}

 By substituting the values of $c_1, c_2, c_4$ in (\ref{e62}) and in (\ref{e63}). we get $h(a,r,c_3)=0$,
   where
 \begin{eqnarray*}
 h(a, r, c_3) & = & -\frac{1}{108 (a+2r)^3} \left(216 a^5 r + 1728 a^4 r^2 + 864 a^3 c_3^2 r - 864 a^3 c_3 r^2 + 5688 a^3 r^3 + 4032 a^2 c_3^2- 2880 a^2 c_3 r^3 \right.\\
& &\left. + 9072 a^2 r^4 + 5760 a c_3^2 r^3 - 2688 a c_3 r^4 + 7200 a r^5 + 2304 c_3^2 r^4 - 768 c_3 r^5 + 2368 r^6- 432 a^3 c_3 r\right.\\
& &\left. + 72 a^3 r^2 - 288 a^2 c_3^2 r - 1440 a^2 c_3 r^2 - 360 a^2 r^3 - 1152 a c_3^2 r^2 - 1344 a c_3 r^3 - 1344 a r^4 - 1152 c_3^2 r^3\right.\\
                               & &\left. - 384 c_3 r^4 - 1056 r^5 + 126 a^3 r + 144 a^2 c_3^2 - 144 a^2 c_3 r+ 648 a^2 r^2 + 576 a c_3^2 r - 288 a c_3 r^2+1056 a r^3 \right. \\
                               & &\left. + 576 c_3^2 r^2 + 624 r^4 -72 a^2 c_3 + 54 a^2 r - 192 a c_3 r + 72 a r^2 - 96 c_3 r^2 + 16 r^3 + 9 a^2  \right.\\
                               & &\left.- 24 a c_3+ 30 a r - 48 c_3 r + 12 r^2 + 6 a + 6 r + 1\right).
 \end{eqnarray*}

 Now,  if $h(a,r, c_3)=0$, then either $c_3(a,r) = c_3^{(1)}(a,r)$ or $c_3(a,r) = c_3^{(2)}(a,r)$,  where
\begin{eqnarray*} \label{e73}
&c_3^{(1)}(a, r)=\frac{1}{12} \frac{(2r+1) (2r+3a+1)(6ar+4r^2-2r+1)+6\sqrt{-2r(6ar+4r^2-2r+1)(3a^2+6ar+4r^2-2r+1)(a+2r)^3}}{(a+2r)(6ar+4r^2-2r+1)}
\end{eqnarray*}
and
\begin{eqnarray*} \label{e74}
&c_3^{(2)}(a, r)=\frac{1}{12} \frac{(2r+1) (2r+3a+1)(6ar+4r^2-2r+1)-6\sqrt{-2r(6ar+4r^2-2r+1)(3a^2+6ar+4r^2-2r+1)(a+2r)^3}}{(a+2r)(6ar+4r^2-2r+1)}.
\end{eqnarray*}
Let  $\mathcal{F}(a)= 6ar+4r^2-2r+1$. Then, for each  $r <0$, it is clear that  $\mathcal{F}(a)$ is a strictly decreasing function of $a$.
If  $a=-\frac{1}{6}\frac{4 r^2-2 r +1}{r}$, i.e. $\mathcal{F}(a)=0,$ then
\begin{align*}
h\left(-\frac{1}{6}\frac{4 r^2-2 r +1}{r},r,c_3\right)=&-\frac{1}{18} \frac{(16 r^4 - 16 r^3 + 12 r^2 - 4 r + 1)}{r} \\
                                           =&-\frac{1}{18r}(4 r^2-2 r+1)^2 >0.
\end{align*}
Hence, if  $h(a,r,c_3)=0,$ then $ a\neq-\frac{1}{6}\frac{4 r^2-2 r +1}{r}.$

 Further, from (\ref{e71}) it follows that
   \begin{equation} \label{e68}
 1 >\frac{2 a (1 + 2 r)}{4 a+8 r} + \frac{2(1+2 r)^2}{12 a+24 r}\geq c_3 \geq 0.
 \end{equation}
Since $c_4-c_2 = -a-2r$, we have  $-1< a+2r < 1$ with $a+2r \neq 0.$ Otherwise for $a+2r=0,$ consider (\ref{trace1}). It is clear that
\begin{eqnarray*} \label{e67}
& 1 > 1-(a+2r)\geq c_4 \geq 0 > -(a+2r), ~\mathrm{if}~ (a+2r)>0, \\
& 1-(a+2r)>1\geq c_4\geq -(a+2r) > 0, ~\mathrm{if}~ (a+2r)<0,
\end{eqnarray*}
 Thus we have the following two subcases.

 \noindent\textbf{Subcase 1}: If $(a+2r)<0$, then
 \begin{align} \label{e72(a)}
 \begin{split}
&-\frac{3}{2}r - \frac{1}{4} - \frac{1}{12} \sqrt{-156 r^2 - 84 r + 33} \leq a  \leq-\frac{5}{2} r - \frac{3}{4} + \frac{1}{12} \sqrt{228 r^2 + 156 r+57} \\ &~\mathrm{where}~-\frac{1}{2} < r < \frac{1-\sqrt3}{1+ \sqrt3}.
\end{split}
\end{align}
So that
$$
\mathcal{F}\left(-\frac{5}{2} r - \frac{3}{4} + \frac{1}{12} \sqrt{228 r^2 + 156 r+57}\right) \leq \mathcal{F}(a),
$$
for all $a$ satisfying (\ref{e72(a)}).
Besides
$$
\mathcal{F}\left(-\frac{5}{2} r - \frac{3}{4} + \frac{1}{12} \sqrt{228 r^2 + 156 r+57}\right)= -11r^2-\frac{13}{2}r+\frac{r}{2}\sqrt{228r^2+156r+57}+1,
$$
 has the  zeros  $-\frac{1}{2}, \frac{1}{4},$ and it is a  continuous function of $r$ in the range given by (\ref{e65}). At $r=-\frac{4}{10},$ the value of this function is $\frac{46}{25}-\frac{1}{25} \sqrt {777} > 0$ and hence $\mathcal{F}(a)>0$  all $a$ and $r$ satisfying (\ref{e72(a)}). Since
$$
3a^2+6ar+4r^2-2r+1=3(a+r)^2 + (r-1)^2 > 0,
$$  neither  $c_3^{(1)}(a,r)$  nor $c_3^{(2)}(a,r)$ is real.

\noindent\textbf{Subcase 2}: If $(a+2r)>0$, then
\begin{align} \label{e72(b)}
\begin{split}
&-\frac{5}{2} r + \frac{1}{4} - \frac{1}{12} \sqrt{228 r^2 + 12 r-15} \leq a \leq -\frac{5}{2} r + \frac{1}{4} + \frac{1}{12} \sqrt{228 r^2 + 12 r-15} \\ &~\mathrm{where}~-\frac{1}{2} < r \leq -\frac{1}{38}-\frac{2}{19}\sqrt6.
\end{split}
 \end{align}
 Then the function
$$-\frac{5}{2} r + \frac{1}{4} - \frac{1}{12} \sqrt{ 228 r^2 + 12 r - 15} +\frac{1}{6r}(4r^2-2 r +1),$$ has zeros only at $r=-\frac{1}{2}$ and at $r=\frac{1}{4}$, and has negative value $-\frac{\sqrt{417}}{60} + \frac{7}{30}$ at $r=-\frac{4}{10}$. Hence,
$$-\frac{5}{2} r + \frac{1}{4} - \frac{1}{12} \sqrt{ 228 r^2 + 12 r - 15} <-\frac{1}{6r}(4r^2-2 r +1),
$$
for $-\frac{1}{2} < r \leq -\frac{1}{38}-\frac{2}{19}\sqrt6.$

Observe that the function
$$
-\frac{5}{2} r + \frac{1}{4} + \frac{1}{12} \sqrt{ 228 r^2 + 12 r - 15} +\frac{1}{6r}\left(4r^2-2 r +1\right)
$$
 has zeros at $\frac{1}{16}-\frac{\sqrt{33}}{16} $ and at $\frac{1}{16}+\frac{\sqrt{33}}{16},$
and hence the values of it is positive or negative in the range $-\frac{1}{2} < r \leq \frac{1}{16}-\frac{\sqrt{33}}{16}$. If $r = -\frac{2}{5},$ then  $-\frac{1}{2} < -\frac{2}{5} < \frac{1}{16}-\frac{\sqrt{33}}{16},$  and it becomes positive, and hence it is positive for all $r$ in $\left(-\frac{1}{2}, \frac{1}{16}-\frac{\sqrt{33}}{16}\right]$. For $r > \frac{1}{16}-\frac{\sqrt{33}}{16},$ the function is negative, and hence
$$
a \leq -\frac{5}{2} r + \frac{1}{4} + \frac{1}{12} \sqrt{ 228 r^2 + 12 r - 15} < -\frac{1}{6r}\left(4r^2-2 r +1\right).
$$
However this is not possible since $a > -\frac{1}{6r}\left(4r^2-2 r +1\right).$

 Consequently,
\begin{equation}\label{e75}
-\frac{1}{6r}\left(4r^2-2 r +1\right) < a \leq -\frac{5}{2} r + \frac{1}{4} + \frac{1}{12} \sqrt{ 228 r^2 + 12 r - 15},
\end{equation}
for $-\frac{1}{2} < r \leq \frac{1}{16}-\frac{\sqrt{33}}{16}$ (since $-\frac{1}{6r}\left(4r^2-2 r +1\right) \neq a$).
Since $\mathcal{F}(a)$ is a strictly decreasing function of $a,$ where   $-\frac{1}{2} < r \leq \frac{1}{16}-\frac{\sqrt{33}}{16}$. Hence, we have
$$
\mathcal{F}\left( -\frac{1}{6r}\left(4r^2-2 r +1\right)\right) > \mathcal{F}(a),
$$
due to (\ref{e75}). So $\mathcal{F}(a)$ is negative, and hence there is no real $c_3^{(1)}(a,r)$  and  $c_3^{(2)}(a,r)$ are not real when $a+2r>0.$

\textbf{Case III:}
If possible let $D_r \in \C_{15}.$  By comparing the coefficients of $x^3, x^2, x, 1$ in the polynomials  $q(x)$ and $\mathcal{P}_{15}(x)$, we obtain
 \begin{align*}
2 c_1-\frac{3}{2} &=-(1+a+2r), \\
 \frac{1}{2}-2 c_1 &=a(1+2r)+\frac{1}{3}(1+2r)^2, \\
18 c_1-72 c_1^2-\frac{3}{2}+96 c_1^3 &=-\left(\frac{1}{3}a(1+2r)^2+r^2+\frac{1}{3}(1-r)^2\right), \\
72 c_1^2-96 c_1^3-18 c_1+\frac{3}{2} &=a \left(r^2+\frac{(1-r)^2}{3}\right),~ \mathrm{where}~ \frac{1}{6}\leq c_1 \leq \frac{1}{3}.\\
\end{align*}
The first equation of the above system gives $c_1= \frac{1}{4}-\frac{1}{2}a-r,$ so that  $ -\frac{1}{6}-2r \leq  a \leq \frac{1}{6}-2r.$  Second equation implies $2ra+\frac{4}{3}r^2-\frac{2}{3}r+\frac{1}{3}=0$ and hence $a=-\frac{1}{6} \frac{4r^2-2r+1}{r},$ where $-\frac{1}{2}<r\leq-\frac{1}{16}-\frac{1}{16}\sqrt{33}.$
  Setting this value of $a$ and $c_1$ in the third equation we have
  $$-\frac{1}{18r^3}(528r^6+368r^5-84r^4-92r^3+13r^2+6r-1)=0.$$

  Let $f_1(r)=\frac{528r^6+368r^5-84r^4-92r^3+13r^2+6r-1}{18},$ for $-\frac{1}{2}<r\leq-\frac{1}{16}-\frac{1}{16}\sqrt{33}.$
Then $f_1''''(r)=10560 r^2+\frac{7360}{3}r-112>0$ implies $f_1'''(r)$ is a strictly increasing function, so that $f_1'''(r)<f_1'''(-\frac{1}{16}-\frac{1}{16}\sqrt{33})<0.$ Hence $f_1''(r)>f_1''(-\frac{1}{16}-\frac{1}{16}\sqrt{33})>0$ over the said range of $r.$ Now $f_1'(r)<f_1'(-\frac{1}{16}-\frac{1}{16}\sqrt{33})<0$ implies $f_1(r)$ is a strictly decreasing function of $r.$ So that, $f_1(r)>f_1\left(-\frac{1}{16}-\frac{1}{16}\sqrt{33}\right)>0$ for all $r\in\left(-\frac{1}{2},-\frac{1}{16}-\frac{1}{16}\sqrt{33}\right],$ which is a contradiction.

 \textbf{Case IV:}
If possible $D_r$ be a generic matrix which represents $\C_{16}.$ Then, We compare the coefficients of $x^3, x^2, x, 1$ in the polynomials  $q(x)$ and $\mathcal{P}_{16}(x)$ and obtain
\begin{align*}
2 c_1-\frac{3}{2} &=-(1+a+2r),\\
-\frac{1}{2}+6c_1-16 c_1^2 &=a(1+2r)+\frac{1}{3}(1+2r)^2, \\
-26c_1+88c_1^2+\frac{5}{2}-96 c_1^3 &=-\left(\frac{1}{3}a(1+2r)^2+r^2+\frac{1}{3}(1-r)^2\right),\\
-72 c_1^2+96 c_1^3+18 c_1-\frac{3}{2} &=a \left(r^2+\frac{(1-r)^2}{3}\right),~\mathrm{where}~ \frac{1}{6}\leq c_1 \leq \frac{1}{3}.
\end{align*}
  These yield $c_1= \frac{1}{4}-\frac{1}{2}a-r$ where $-\frac{1}{6}-2r \leq a \leq \frac{1}{6}-2r.$ The second equation becomes $-4a^2-18ar-\frac{52}{3}r^2+\frac{2}{3}r-\frac{1}{3}=0$ which further implies either $a=-\frac{9}{4}r+\frac{1}{12}\sqrt{105r^2+24r-12}$ or $a=-\frac{9}{4}r-\frac{1}{12}\sqrt{105r^2+24r-12}.$

  When $a=-\frac{9}{4}r+\frac{1}{12}\sqrt{105r^2+24r-12},$ we have $-\frac{1}{16}-\frac{1}{48}\sqrt{393}\leq r \leq -\frac{4}{35}-\frac{2}{35}\sqrt{39}.$ Then the third equation becomes $(\frac{37}{36}r^2+\frac{1}{9}r-\frac{1}{18})\sqrt{105r^2+24r-12}-\frac{39}{4}r^3=0$, which is not possible since $\frac{37}{36}r^2+\frac{1}{9}r-\frac{1}{18}>0$ .

  If $a=-\frac{9}{4}r-\frac{1}{12}\sqrt{105r^2+24r-12},$ we have $-\frac{1}{2} < r \leq -\frac{4}{35}-\frac{2}{35}\sqrt{39}.$ Hence the third equation becomes $$f_2(r)=\left(-\frac{37}{36}r^2-\frac{1}{9}r+\frac{1}{18}\right)\sqrt{105r^2+24r-12}-\frac{39}{4}r^3=0.$$
  The function $\sqrt{105r^2+24r-12}$ is a strictly decreasing function of $r$ for  $r \in [-\frac{1}{2},-\frac{4}{35}-\frac{2}{35}\sqrt{39}],$ and hence $\frac{3}{2}>\sqrt{105r^2+24r-12}>0.$ So,
  $f_2(r)\geq g_2(r),$ where $$g_2(r)=\frac{1}{12}-\frac{1}{6}r-\frac{37}{24}r^2-\frac{39}{4}r^3.$$ It can be easily verified that $g_2'(r)<0,$ so that $g_2(r)$ is a strictly decreasing function. Hence $g_2(r)>g_2(-\frac{4}{35}-\frac{2}{35}\sqrt{39})>0$ implies that $f_2(r)>0$  for all $r\in [-\frac{1}{2},-\frac{4}{35}-\frac{2}{35}\sqrt{39}].$

  \textbf{Case V:}
 Let $D_r$  represents $\C_{20}.$ Then, by comparing the coefficients of $x^3, x^2, x, 1$ in the polynomials  $q(x)$ and $\mathcal{P}_{20}(x)$, we obtain
 \begin{align*}
 -\frac{5}{2}+6c_1 &=-(1+a+2r),\\
 -6c_1+\frac{3}{2} &= a(1+2r)+\frac{1}{3}(1+2r)^2,\\
 72c_1^2-96c_1^3-18c_1+\frac{3}{2} &=-\left(\frac{1}{3}a(1+2r)^2+r^2+\frac{1}{3}(1-r)^2\right),\\
 96c_1^3-72c_1^2+18c_1-\frac{3}{2} &=a \left(r^2+\frac{(1-r)^2}{3}\right),~ \mathrm{where},~ \frac{1}{6}\leq c_1 \leq \frac{1}{3}.\\
 \end{align*}
  Therefore $c_1= \frac{1}{4}-\frac{1}{6}a-\frac{1}{3}r,$  where $-\frac{1}{6}-2r  \leq a \leq \frac{1}{6}-2r.$ The second equation of the above system gives $a=-\frac{1}{6}\frac{(4r^2-2r+1)}{r},$ where $r~\in(-\frac{1}{2}, \frac{1}{16}-\frac{1}{16}\sqrt{33}].$ From the third equation we have
 \begin{align*}
\frac{1}{486}\frac{\left(80r^6+816r^5-420r^4+20r^3-15r^2+6r-1\right)}{r^3}=0.
 \end{align*}
  Let $f_3(r)=\frac{1}{486}\left(80r^6+816r^5-420r^4+20r^3-15r^2+6r-1\right).$ Then
 \begin{align*}
 f''''_3(r)&=\frac{80}{27}(20r^2+68r-7)\\
                     &=\frac{1600}{27}\left(r-\frac{1}{10}\right)\left(r+\frac{7}{2}\right) <0.
 \end{align*}
 Hence $f_3'''(r)=\frac{1600}{81}r^3+\frac{2720}{27}r^2-\frac{560}{27}r+\frac{20}{81}$ is a strictly decreasing function of $r,$ where $f_3'''\left(\frac{1}{16}-\frac{1}{16}\sqrt{33}\right)>0.$ So that, $f_3''(r)=\frac{400}{81}r^4+\frac{2720}{81}r^3-\frac{280}{27}r^2+\frac{20}{81}r-\frac{5}{81}$ is a strictly increasing function and $f_3''\left(\frac{1}{16}-\frac{1}{16}\sqrt{33}\right)<0.$ Hence, $f_3'(r)$ is a strictly decreasing function of $r,$
 where $f_3'\left(\frac{1}{16}-\frac{1}{16}\sqrt{33}\right)>0.$ This implies $f_3(r)$ is strictly increasing. Finally, $f_3\left(\frac{1}{16}-\frac{1}{16}\sqrt{33}\right)<0$ shows that $f_3(r)$ is negative for $r~\in(-\frac{1}{2}, \frac{1}{16}-\frac{1}{16}\sqrt{33}].$

 \textbf{Case VI:}
 Assign $D_r$ to be in $\C_{23}$ and comparing the coefficients of $x^3, x^2, x, 1$ in the polynomials  $q(x)$ and $\mathcal{P}_{23}(x)$, we obtain
 \begin{align*}
 -4 c_1 &=-(1+a+2r),\\
 -\frac{1}{2}+8c_1^2 &= a(1+2r)+\frac{1}{3}(1+2r)^2,\\
 96c_1^3-80c_1^2+22c_1-2 &=-\left(\frac{1}{3}a(1+2r)^2+r^2+\frac{1}{3}(1-r)^2\right),\\
 72c_1^2-96c_1^3-18c_1+\frac{3}{2} &=a \left(r^2+\frac{(1-r)^2}{3}\right),~\mathrm{where},~ \frac{1}{6}\leq c_1 \leq \frac{1}{3}.
 \end{align*}
 Thus $c_1=\frac{1}{4}+\frac{1}{4}a+\frac{1}{2}r,$ where $-\frac{1}{3}-2r  \leq a \leq \frac{1}{3}-2r.$ Further the second equation implies
 $a^2=\frac{2}{3}-\frac{4}{3}r-\frac{4}{3}r^2$ i.e. $a=\pm\frac{1}{3}\sqrt{-12r^2-12r+6}.$ Using this value of $a^2,$ the third equation gives
 \begin{align*}
 &\frac{4}{3}a-\frac{8}{3}ar+\frac{52}{3}ar^2+6r-12r^2=0. \\
 \mathrm{Hence,}~&a=\frac{9}{2}\frac{r(2r-1)}{(13r^2-2r+1)}>0~\mathrm{for}~r\in\left(\frac{1}{2},\frac{1-\sqrt3}{1+\sqrt3}\right).
 \end{align*}
  It is easy to see that, $\frac{1}{3}\sqrt{-12r^2-12r+6}-\frac{9}{2}\frac{r(2r-1)}{(13r^2-2r+1)}\neq0~\forall ~r\in\left(\frac{1}{2},\frac{1-\sqrt3}{1+\sqrt3}\right).$ Hence a contradiction.

 \textbf{Case VII:}
Let $D_r \in \C_{25}.$ Then, we compare the coefficients of $x^3, x^2$ in the polynomials  $q(x)$ and $\mathcal{P}_{25}(x)$ and obtain
  \begin{align*}
 -1 &=-(1+a+2r),\\
 -24c_1^2+12c_1-\frac{3}{2} &= a(1+2r)+\frac{1}{3}(1+2r)^2,\mathrm{where},~ \frac{1}{6}\leq c_1 \leq \frac{1}{3}.
 %&96c_1^3-48c_1^2+6c_1=\frac{1}{3}a(1+2r)^2+r^2+\frac{1}{3}(1-r)^2,\\
% &-96c_1^3+72c_1^2-18c_1+\frac{3}{2}=a \left(r^2+\frac{(1-r)^2}{3}\right),\\
 \end{align*}
 This gives $a=-2r,$ from the first equation. The second equation gives
 \begin{align*}
 -24c_1^2+12 c_1+\left(-\frac{11}{6}+\frac{2}{3}r+\frac{8}{3}r^2\right)=0,
 \end{align*}
 which has discriminant
 \begin{align*}
  &16(16r^2+4r-2)=256\left(r+\frac{1}{2}\right)\left(r-\frac{1}{4}\right)<0.
 \end{align*}
 So it is not satisfied by any real $c_1$ for $r\in(-\frac{1}{2},\frac{1-\sqrt3}{1+\sqrt3}).$

 From the above discussions it is clear that $D_r$ cannot belongs to $\bigcup_{i=1}^{37}\C_i.$ Hence the proof.$\hfill{\square}$

 We emphasize that $r \pm \frac{1-r}{\sqrt 3} i,-\frac{1}{2} < r < \frac{1-\sqrt3}{1+\sqrt3}$ are eigenvalues of the DS matrices given in (\ref{DS1}).

 \subsection{Eigenvalue region of PDS matrices of higher order }
In this section, we prove some inclusion theorems for the eigenvalue region of  PDS  matrices of order $n\geq 2.$ First we recall the following lemma. 
\begin{lemma} \label{l2}[Theorem 3, \cite{Perfect1965}]
If $\lambda$ is an eigenvalue,  different from $1$, of a column stochastic matrix, and $v=(v_1, v_2,..., v_n)^t$ is an eigenvector corresponding to $\lambda$, then $\displaystyle\sum_{i=1}^{n}{v_i}=0$.
\end{lemma}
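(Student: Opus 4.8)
The plan is to use the one structural fact available, namely that for a column stochastic matrix $A=(a_{ij})$ every column sums to $1$; written in matrix form this says $\mathbf{1}^t A=\mathbf{1}^t$, where $\mathbf{1}=(1,1,\dots,1)^t$. In other words, $\mathbf{1}$ is a left eigenvector of $A$ for the eigenvalue $1$. First I would record this identity, which is literally the column-sum condition $\sum_{i=1}^n a_{ij}=1$ for all $j$ restated as a matrix equation.

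Next, suppose $\lambda\neq 1$ is an eigenvalue with eigenvector $v$, so $Av=\lambda v$. I would left-multiply this equation by $\mathbf{1}^t$ to obtain $\mathbf{1}^t A v=\lambda\,\mathbf{1}^t v$. Substituting $\mathbf{1}^t A=\mathbf{1}^t$ on the left-hand side gives $\mathbf{1}^t v=\lambda\,\mathbf{1}^t v$, that is, $(1-\lambda)\,\mathbf{1}^t v=0$. Since $\mathbf{1}^t v=\sum_{i=1}^n v_i$ and $1-\lambda\neq 0$, this forces $\sum_{i=1}^n v_i=0$, which is the assertion. The same computation is valid for complex $\lambda$ and complex $v$, since the pairing $\mathbf{1}^t v=\sum_i v_i$ is bilinear rather than Hermitian and $1-\lambda$ remains invertible in $\mathbb{C}$.

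There is essentially no obstacle here: the statement is immediate once one notices that $\mathbf{1}$ spans the left $1$-eigenspace direction and that an eigenvector of $A$ and an eigenvector of $A^t$ belonging to distinct eigenvalues annihilate one another under the bilinear form $x^t y$. The only points worth flagging are that the argument uses neither $\lambda\neq 0$ nor irreducibility of $A$ — only the column-sum normalization together with $\lambda\neq 1$ — and that the analogous statement for row stochastic matrices (used implicitly elsewhere for doubly stochastic matrices, whose left eigenvectors for $\lambda\neq1$ then sum to zero) follows by applying the same reasoning to $A^t$.
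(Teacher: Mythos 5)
Your proof is correct and complete: the identity $\mathbf{1}^t A=\mathbf{1}^t$ for a column stochastic matrix, paired against $Av=\lambda v$, immediately yields $(1-\lambda)\mathbf{1}^t v=0$ and hence $\sum_i v_i=0$ when $\lambda\neq 1$. The paper itself gives no proof of this lemma — it is simply quoted from Perfect and Mirsky — and your argument is the standard one that the cited reference uses, so there is nothing to add.
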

The following theorem states  a geometric property of eigenvalue region of permutative doubly stochastic matrices. The proof follows from  the  Theorem 3 \cite{Perfect1965}.

\begin{theorem}
$\Lambda(\pds_n), n\geq 2$ is star shaped.
\end{theorem}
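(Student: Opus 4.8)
The plan is to prove that $\Lambda(\pds_n)$ is star-shaped with respect to the origin, i.e.\ that $0\in\Lambda(\pds_n)$ and that for every $\lambda\in\Lambda(\pds_n)$ the segment $\{t\lambda:0\le t\le 1\}$ is contained in $\Lambda(\pds_n)$. The point $0$ lies in $\Lambda(\pds_n)$ because $\frac1nJ$ is permutative (its single row $\frac1n\mathbf 1$ is fixed by every permutation matrix) and doubly stochastic, with spectrum $\{1,0,\dots,0\}$.

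The key device is the averaging deformation $A_t:=tA+(1-t)\frac1nJ$, $t\in[0,1]$, where $A\in\pds_n$ is chosen with $\lambda\in\Lambda(A)$. I would first check that $A_t\in\pds_n$: it is doubly stochastic as a convex combination of doubly stochastic matrices, and if $A$ has generic row $c$ with multipliers $P_1,\dots,P_{n-1}$, then since the all-ones row vector $\mathbf 1$ satisfies $\mathbf 1P_j=\mathbf 1$, the $j$-th row of $A_t$ equals $t(cP_j)+(1-t)\tfrac1n\mathbf 1=\bigl(tc+(1-t)\tfrac1n\mathbf 1\bigr)P_j$, so $A_t$ is permutative with generic row $c_t=tc+(1-t)\frac1n\mathbf 1$. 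This is the only point where permutativity (rather than mere double stochasticity) needs attention, and it is exactly why the deformation is taken toward $\frac1nJ$ and not toward $I_n$.

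Next I would track the eigenvalue through the deformation. If $\lambda\ne 1$, pick an eigenvector $v$ with $Av=\lambda v$; since $A$ is column stochastic, Lemma~\ref{l2} gives $\sum_i v_i=0$, hence $Jv=0$ and $A_tv=tAv+(1-t)\frac1nJv=t\lambda v$. Therefore $t\lambda\in\Lambda(A_t)\subseteq\Lambda(\pds_n)$ for all $t\in[0,1]$, so the segment joining $0$ to $\lambda$ lies in $\Lambda(\pds_n)$. The leftover case $\lambda=1$ only asks that $[0,1]\subseteq\Lambda(\pds_n)$: here I would use $B_t=tI_n+(1-t)\frac1nJ\in\pds_n$ (as $I_n$ is permutative), whose spectrum is $\{1,t,\dots,t\}$, since $B_t$ fixes $\mathbf 1^T$ and acts as $tI$ on the subspace $\{w:\sum w_i=0\}$, which is nontrivial because $n\ge 2$.

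Combining the two cases, every $\lambda\in\Lambda(\pds_n)$ is joined to $0$ by a segment inside $\Lambda(\pds_n)$, which is precisely star-shapedness. I do not anticipate a genuine obstacle here; the only subtlety is the verification that the averaging map keeps the matrix permutative, and that is immediate from $\mathbf 1P_j=\mathbf 1$.
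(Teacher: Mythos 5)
Your proof is correct and follows essentially the same route as the paper: deform $A$ toward $\frac1nJ$ via $tA+(1-t)\frac1nJ$ and use Lemma~\ref{l2} to conclude that an eigenvalue $\lambda\neq 1$ moves to $t\lambda$. You are in fact slightly more careful than the paper, which asserts without comment that the convex combination remains permutative and does not separately treat $\lambda=1$; your verification via $\mathbf 1P_j=\mathbf 1$ and your handling of the segment $[0,1]$ close those small gaps.
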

\begin{proof}
Let $D$ be a permutative doubly stochastic matrix, and $\lambda \neq 1$ is an eigenvalue of $D$ corresponding to the eigenvector $v$. Then for any choice of $P_1, P_2,..., P_{n-1} \in \S_n$, $\frac{1}{n}J=\mathcal{M}(\mathbf{1};P_1,\ldots,P_{n-1})$ is a PDS matrix, where $\mathbf{1}=(1,1,\ldots,1)$.  Now, using Lemma \ref{l2},  we have
\begin{equation*}
\left(t D + (1-t) \frac{1}{n} J\right)v
= tDv + (1-t) \frac{1}{n} Jv
= (t\lambda)v,~\mathrm{for}~ 0 \leq t \leq 1.
\end{equation*}
Hence $t\lambda$ is an eigenvalue of the matrix $\left(t D + (1-t) \frac{1}{n} J\right)$, which is a PDS matrix. This completes the proof.
\end{proof}

\begin{theorem}
 $[-1, 1] \subseteq \Lambda(\pds_n),$  $n \geq 2$.
 \end{theorem}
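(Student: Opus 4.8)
The plan is to exhibit, for every $t \in [-1,1]$, an explicit PDS matrix of order $n$ having $t$ as an eigenvalue. The natural building block is the order-$2$ case, where $\pds_2 = \left\{ \bmatrix{a & 1-a \\ 1-a & a} : 0 \le a \le 1 \right\}$ has eigenvalues $1$ and $2a-1$; letting $a$ range over $[0,1]$ gives exactly $[-1,1]$. So the real content is lifting this to general $n$ while keeping the matrix both permutative and doubly stochastic. First I would split into the cases $n$ even and $n$ odd.

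For $n = 2m$ even, I would take the block-diagonal matrix $B_a = \mathrm{diag}\!\left(\bmatrix{a & 1-a \\ 1-a & a}, \ldots, \bmatrix{a & 1-a \\ 1-a & a}\right)$ with $m$ identical $2\times 2$ blocks. This is clearly doubly stochastic, and it is permutative: every row is the vector $c = (a, 1-a, 0, \ldots, 0)$ up to a permutation of its entries (within each block one swaps the two nonzero coordinates, and the different blocks are reached by the permutation cycling the blocks). Concretely $B_a \in \mathcal{M}(c; P_1, \ldots, P_{n-1})$ for a suitable choice of the $P_j$'s, exactly as in the subcollection $K$ used in the proof that $\Lambda(\C_5 \cup \C_6) = [-1,1]$. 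Its spectrum is $\{1, 2a-1\}$ each with multiplicity $m$, so $2a - 1$ is an eigenvalue, and as $a$ runs over $[0,1]$ we sweep out $[-1,1]$.

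For $n = 2m+1$ odd, I would instead use $B_a \oplus \frac{1}{m+1} J_{m+1}$, i.e. $m-1$ copies of the $2\times 2$ block $\bmatrix{a & 1-a \\ 1-a & a}$ together with one copy of the $(m+1)\times(m+1)$ all-ones-over-$(m+1)$ block $\frac{1}{m+1}J_{m+1}$; this has order $2(m-1) + (m+1) = 3m - 1$, so I would instead just append a single $1\times 1$ block if $n$ is odd is impossible since $1\times1$ doubly stochastic is $[1]$ which is not permutative-compatible with the rest — so more carefully: for odd $n \ge 3$ take $\lceil (n-3)/2 \rceil$ copies of the $2\times2$ block and one $3\times 3$ doubly stochastic permutative block realizing the same eigenvalue. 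A cleaner route is to appeal to the star-shapedness theorem just proved together with the order-$3$ computation $\left[-1,-\tfrac12\right]\cup\left[\tfrac12,1\right]\subseteq\Lambda(\pds_3)$: since $\pds_3$ already contains $1$ and values near $-1$, star-shapedness with center $1$... no, that only gives the segment from $1$ to each point. The honest fix for odd $n$: use the matrix $C_a \oplus \left(\text{an }(n-2)\times(n-2)\text{ PDS matrix with the same eigenvalue built from the }3\text{-cycle construction }A_x\text{ of Theorem }\ref{eig_order3}\right)$; since $f(x) = \sqrt{1-3x+3x^2}$ attains every value of $\left[\tfrac12, 1\right]$ and $-f(x)$ every value of $\left[-1,-\tfrac12\right]$ via matrices in $\pds_3$, combining $\lfloor (n-3)/2\rfloor$ $2\times2$ blocks with one $3\times3$ block $A_x$ and matching eigenvalues covers all of $[-1,1]$ for $n \ge 3$.

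The main obstacle I anticipate is purely bookkeeping: verifying that the block-diagonal matrix is genuinely \emph{permutative} in the strict sense of Definition \ref{def:permu} (each row a permutation of the common vector $c$), which forces $c$ to have the same multiset of entries in every row. The $2\times2$ blocks are fine (each block row is $(a,1-a)$ or $(1-a,a)$), and two different blocks sit in disjoint coordinate supports, so a row from block $i$ and a row from block $j$ are permutations of each other precisely because both are permutations of $c=(a,1-a,0,\dots,0)$. I would write this verification out explicitly for the even case and handle the odd case either by the $3\times3$-block patch above or, more slickly, by noting $n$ and $n-2$ have the same parity and inducting down to the base cases $n=2$ and $n=3$, each of which is already settled in the excerpt. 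Either way the eigenvalue $2a-1$ (resp. $\pm f(x)$) lies in the spectrum, and letting the parameter vary yields $[-1,1]\subseteq\Lambda(\pds_n)$ for all $n\ge 2$, completing the proof. $\hfill\square$
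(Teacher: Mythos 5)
Your proposal is correct and is essentially the paper's own proof: for even $n$ a block-diagonal of $2\times 2$ blocks $M_a$ with common generic row $c=(a,1-a,0,\ldots,0)$, and for odd $n\geq 5$ the same with one $3\times 3$ block of the $A_x$ type appended, the base cases $n=2,3$ being already settled. The paper resolves your ``matching'' worry in the simplest way you anticipate: it takes the $3\times 3$ block to be $A_a$ with the \emph{same parameter} $a$ as the $2\times 2$ blocks (so every row is a permutation of $c$ and permutativity is automatic), and then the eigenvalue $2a-1$ coming from the $M_a$ blocks alone sweeps out all of $[-1,1]$, so no eigenvalue matching with $\pm\sqrt{1-3a+3a^2}$ is needed.
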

 \noindent
\begin{proof}
For $0 \leq a \leq 1$, define
$$
M_a =
\bmatrix{
a & 1-a \\
1-a & a
}.
$$
Then, it is easy to see that, the eigenvalues of $M_a$ are $1$ and $2a-1$ .

 If $n$ is an even positive integer then the matrix
 $$
\bmatrix{
M_a & 0 & \hdots & 0\\
0 & M_a & \hdots & 0\\
\vdots & \vdots & \ddots & \vdots \\
0 & 0 & \hdots & M_a
}
$$
 with first row $c = (a, 1-a, 0,...,0)$ is permutative doubly stochastic.
Hence, if $n$ is even, then $[-1, 1] \subset \Lambda(\pds_n).$ For $n=3,$ we have proved that $[-1,1] \subset\Lambda(\pds_3)$ corresponds to the matrix $A_x$ in Theorem \ref{eig_order3}.

Next, if $n$ is odd and $n \geq 5$ then we proceed as follows. Let
$$
\bmatrix{
M_a & 0 & \hdots & 0\\
0 & M_a & \hdots & 0\\
\vdots & \vdots & \ddots & \vdots \\
0 & 0 & \hdots & A_a
},
$$
where $A_a=\bmatrix{
   0 & a & 1-a \\
  a & 1-a & 0\\
  1-a & 0 & a
    }, $ for $0 \leq a \leq 1$ and $M_a$ repeats along the diagonal for $\frac{n-3}{2}$ times. Then the desired results follows as above. 
\end{proof}

Next theorem shows  a subset in the complex unit circular disc, which is contained in the eigenvalue region of  PDS matrices of  order $n.$
\begin{theorem} \label{subset_ineq}
$[-1 ,1] \cup \Pi_3 \cup \Pi_4 \cup\hdots \cup \Pi_{\left[\frac{n}{2}\right]} \cup \Pi_{\left[\frac{n+1}{2}\right]}^0 \cup \Pi_n \subseteq  \Lambda(\pds_n),$ $n\geq 3$.
\end{theorem}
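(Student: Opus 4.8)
The plan is to show that every point of each of the six listed pieces is an eigenvalue of \emph{some} matrix in $\pds_n$, which is enough because $\Lambda(\pds_n)$ is by definition the union over all PDS matrices of order $n$ of their spectra. Two elementary facts do most of the work. First, any circulant matrix whose first row $c$ is nonnegative with $\sum c=1$ is a PDS matrix: its rows are the cyclic shifts of $c$, hence permutations of $c$, and a circulant with row sums $1$ is doubly stochastic; moreover, writing $\zeta_j:=e^{2\pi i/j}$, the frequency-$1$ eigenvalue of the $j\times j$ circulant with first row $(a_0,\ldots,a_{j-1})$ equals $\sum_{\ell=0}^{j-1}a_\ell\zeta_j^{\,\ell}$, so as $(a_0,\ldots,a_{j-1})$ runs over the probability simplex this eigenvalue runs over all of $\Pi_j$. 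Second, if $A_1,A_2$ are square matrices all of whose rows (which automatically carry zeros coming from the off-diagonal blocks) have one and the same multiset of entries, then the block-diagonal matrix $\mathrm{diag}(A_1,A_2)$ is again permutative, it is doubly stochastic when $A_1,A_2$ are, and $\Lambda(\mathrm{diag}(A_1,A_2))=\Lambda(A_1)\cup\Lambda(A_2)$. The Perron--Frobenius Theorem (Theorem \ref{PF}) supplies the eigenvalue $1$ for free.

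With these in hand the ``easy'' pieces are immediate. The segment $[-1,1]=\Pi_2$ is contained in $\Lambda(\pds_n)$ by the theorem just established. For $\Pi_n$, given $z\in\Pi_n$ choose a probability vector $(a_0,\ldots,a_{n-1})$ with $z=\sum_\ell a_\ell\zeta_n^{\,\ell}$; then $z$ is an eigenvalue of the $n\times n$ circulant with that first row, which lies in $\pds_n$. For $3\le j\le\lfloor n/2\rfloor$ and $z\in\Pi_j$, write $z=\sum_\ell a_\ell\zeta_j^{\,\ell}$ with $(a_0,\ldots,a_{j-1})$ a probability vector, let $C$ be the $j\times j$ circulant with first row $(a_0,\ldots,a_{j-1})$, and let $B$ be the $(n-j)\times(n-j)$ circulant with first row $(a_0,\ldots,a_{j-1},0,\ldots,0)$, where the number of trailing zeros is $n-2j\ge 0$. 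Every row of $\mathrm{diag}(C,B)$ then has multiset of entries $\{a_0,\ldots,a_{j-1}\}\cup\{0,\ldots,0\}$ ($n-j$ zeros), so $\mathrm{diag}(C,B)\in\pds_n$ and $z$ is among its eigenvalues; hence $\Pi_j\subseteq\Lambda(\pds_n)$.

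The one piece needing a twist is $\Pi_{[(n+1)/2]}^0$, where $[(n+1)/2]=\lceil n/2\rceil$. If $n$ is even then $\lceil n/2\rceil=\lfloor n/2\rfloor$ and $\Pi_{n/2}^0\subseteq\Pi_{n/2}\subseteq\Lambda(\pds_n)$ by the previous paragraph (when $n=4$ we have $m=2$ and $\Pi_2^0=\{1,-1\}\subseteq[-1,1]$). So assume $n$ is odd, $n=2m-1$ with $m=\lceil n/2\rceil$. Here the block trick above fails for $j=m$ because $n-2m=-1<0$, i.e.\ there is nothing left to pad a full $m\times m$ circulant with. Instead, for $z\in\mathrm{conv}\{\zeta_m,\ldots,\zeta_m^{\,m-1}\}$ write $z=\sum_{k=1}^{m-1}c_{k+1}\zeta_m^{\,k}$ with $c_{k+1}\ge 0$ and $\sum_{k}c_{k+1}=1$, and let $C$ be the $m\times m$ circulant with first row $(0,c_2,\ldots,c_m)$; since the coefficient of $\zeta_m^{\,0}=1$ is zero, the frequency-$1$ eigenvalue of $C$ is exactly $z$. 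Let $B$ be the $(m-1)\times(m-1)$ circulant with first row $(c_2,\ldots,c_m)$ and set $A=\mathrm{diag}(C,B)$. Every row of $A$ has multiset of entries $\{c_2,\ldots,c_m\}\cup\{0,\ldots,0\}$ ($m$ zeros), so $A\in\pds_n$ has order $m+(m-1)=2m-1=n$ and $z\in\Lambda(A)$; together with $1\in\Lambda(\pds_n)$ this gives $\Pi_m^0\subseteq\Lambda(\pds_n)$ and finishes the proof.

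The only conceptual obstacle is this last odd case: one must notice that forcing a zero into the first row of the $m\times m$ circulant does two things at once --- it deletes the vertex $1$ from the set of roots of unity whose convex hull the eigenvalue can reach (which is precisely why the attainable region shrinks from $\Pi_m$ to $\Pi_m^0$), and it frees exactly $m-1=n-m$ further zero positions, which is exactly the number needed to complete the matrix to order $n$ with a circulant of order $n-m$. Everything else is bookkeeping: verifying permutativity of the block-diagonal matrices by comparing multisets of row entries, and noting that the union $\Pi_3\cup\cdots\cup\Pi_{[n/2]}$ is vacuous when $n\le 5$, so that for small $n$ the claim reduces to statements already proved. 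I would expect no further difficulty.
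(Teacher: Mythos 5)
Your proof is correct and follows essentially the same route as the paper: block-diagonal matrices with circulant blocks to capture $\Pi_j$ for $j\leq \left[\frac{n}{2}\right]$, a trace-zero circulant block for $\Pi^0_{\left[\frac{n+1}{2}\right]}$, and the full $n\times n$ circulant for $\Pi_n$. You are in fact more careful than the paper, which never verifies that $\mathrm{diag}(D_m,D_{n-m})$ is permutative; your choice of the complementary block (a circulant whose first row is the same probability vector padded with zeros, so that all row multisets agree) is exactly the detail needed to make that step airtight.
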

\noindent
\begin{proof}
Let $D_n$ be an $n \times n$ permutative doubly stochastic matrix such that
\begin{eqnarray*} 
\mathcal{D}_n=
\bmatrix{
D_m & 0 \\
0 & D_{n-m}
}
\end{eqnarray*}
for $m = 2, 3,...,[\frac{n}{2}]$. Then, the eigenvalues of $\mathcal{D}_n$ is the union of the eigenvalues of $D_m$ and $D_{n-m}$. Hence, $\Lambda(\pds_m) \subseteq \Lambda(\pds_n)$. By varying $D_m$ over the  circulant doubly stochastic matrix of order $m$, we obtain $\Pi_m \subseteq \Lambda(\pds_m) \subseteq \Lambda(\pds_n)$ for $m = 2, 3,...,[\frac{n}{2}]$. Also, for $m = [\frac{n+1}{2}]$, by varying $D_m$ over the  trace zero circulant doubly stochastic matrix of order $m$, we get $\Pi_m^0 \subseteq \Lambda(\pds_n).$
\end{proof}
 Clearly the set  inequality in Theorem \ref{subset_ineq} is proper when $n \geq 4.$

\section{Conclusion}

We study the eigenvalue region of permutative doubly stochastic (PDS) matrices. First we characterize the set $\pds_n,$ which denotes the set of all symbolic PDS matrices of order $n,$ by partitioning it into different classes, called cogredient classes such that any two matrices in a class are permutationally similar. We determine all such classes when $n\leq 4.$ For $n=2,3,4,$ we show that there are $1,2,37$ cogredient classes respectively. We also derive the symbolic PDS matrices which represent these classes. We show that eigenvalue region of $\pds_n$ is same as the eigenvalue region of $\Omega_n$ for $n=2,3,$ where $\Omega_n$ denotes the set of all doubly stochastic matrices of order $n.$ The eigenvalue region of $\pds_n, n=4$ is obtained as the union of eigenvalue region of $\C_i,$ where $\C_i,$ $1\leq i\leq 37$ denote the cogredient classes. We provide analytical expression for the eigenvalues of all the classes except six classes for $n=4$ whose expressions are cumbersome; though it can be obtained by performing computations in a computer algebra system. We perform numerical examples for matrices in such classes to estimate the eigenvalue region.  We are able to show analytically that eigenvalue region of $\pds_4$ is a proper subset of eigenvalue region of $\Omega_4$ by identifying two line segments from the eigenvalue region of $\Omega_4$ that are not part of the eigenvalue region of $\pds_4.$

It is evident from the results derived in the article that eigenvalue region of the cogredient class $\C_1$ is same as $\Pi_4,$ the convex hull of the $4^{th}$ roots of unity. It has also been observed using numerical computations that the eigenvalue regions of $\C_{15},\C_{16},\C_{20},\C_{23},\C_{25}$ lie inside $\Pi_4,$ and the eigenvalue regions of $\C_3, \C_4$ contain parts of both $\Pi_3$ and $\Pi_4$. Thus we attempt to estimate the eigenvalue region of $\pds_4$ inside $\Pi_3$ but lies outside $\Pi_4.$

We conjecture that the boundary curves of the exact eigenvalue region of $\pds_4$ that lie inside $\Pi_3\setminus\Pi_4$ can be approximated by $\lambda(t), \overline{\lambda(t)}, \mu(s), \overline{\mu(s)}$ where $\lambda(t)$ and $\mu(s)$ are non-real eigenvalues of $$
A(t)=\bmatrix{t & 1-t & 0 & 0\\1-t & 0 & t & 0\\0 & 0 & 1-t & t\\0 & t & 0 & 1-t}~~\mathrm{and}~~
B(s)=\bmatrix{s & 0 & 1-s & 0\\0 & 0 & s & 1-s\\0 & 1-s & 0 & s\\1-s & s & 0 & 0}
$$ respectively, where  $0.82032\leq t \leq 1$  and for $0.76786 \leq s \leq 1.$ Finally we prove some inclusion theorem for eigenvalue region of $\pds_n.$

%Hence we consider the matrices  in $[\M(c,(1342),(14)(23),(1243))]$ from the class $\C_3,$
%where
%$\M(c,(1342),(14)(23),(1243))=\begin{pmatrix}
%c_1 & c_2 & c_3 & c_4 \\
%c_2 & c_4 & c_1 & c_3 \\
%c_4 & c_3 & c_2 & c_ 1 \\
%c_3 & c_1 & c_4 & c_2
%\end{pmatrix}.$ Now it is clear that $E_{11}=(234),E_{12}=(12),E_{13}=(1324),E_{14}=(143)$ are the extreme points for the matrix set  $[\M(c,(1342),(14)(23),(1243))].$
% Now we take all possible matrices of the form $$A_{k_1,\ldots, k_r}=\sum_{i=1}^r{t_iE_{1k_i}}$$ where $k_i \in \{1,2,3,4\},r \in \{2,3,4\},k_i\neq k_j for i \neq j$ and $\sum{t_i}=1,t_i \geq 0.$

\section*{Acknowledgment} Amrita Mandal thanks Council for Scientific and Industrial Research(CSIR) for financial support in the form of a junior/senior research fellowship. M. Rajesh
Kannan thanks the Department of Science and Technology, India, for financial assistance through
the  MATRICS project  (MTR/2018/0009863) .

\section{Appendix}
\noindent Below we describe generic matrices which represent $\C_i,i=7,\ldots,37.$ First we write the generic matrices from which other generic matrices in type $\T_j,$ for $j\in \{2,3,4\}$ can be obtained by multiplying it with a permutation matrix from left. Here $\C_i \equiv \M(c;P,Q,R),$ means that $\C_i$ is the class corresponding to the generic matrix $\M(c;P,Q,R).$

First consider $\T_4.$ Then
$$\C_7\equiv
\bmatrix{
\frac{1}{4} & \frac{1}{4}&c_3 & c_4 \\
\frac{1}{4} & \frac{1}{4}&c_3 & c_4 \\
\frac{1}{4} & \frac{1}{4}&c_4 & c_3\\
\frac{1}{4} & \frac{1}{4}&c_4 & c_3 \\
},
\C_9\equiv
 \bmatrix
{\frac{1}{4} & \frac{1}{4}&c_3 & c_4 \\
\frac{1}{4} & \frac{1}{4}&c_4 & c_3 \\
\frac{1}{4} &c_3 & c_4 & \frac{1}{4}\\
\frac{1}{4} &c_4 & c_3& \frac{1}{4} \\
}
$$
and
 $$\C_8= (234)\C_7\equiv
 \bmatrix
{\frac{1}{4} & \frac{1}{4}&c_3 & c_4 \\
\frac{1}{4} & \frac{1}{4}&c_4 & c_3 \\
\frac{1}{4} & \frac{1}{4}&c_4 & c_3\\
\frac{1}{4} & \frac{1}{4}&c_3 & c_4 \\
} ,
\C_{10} = (12)\C_9\equiv
\bmatrix{
\frac{1}{4} & \frac{1}{4}&c_3 & c_4 \\
\frac{1}{4} & \frac{1}{4}&c_4 & c_3 \\
\frac{1}{4}&c_4 & c_3 & \frac{1}{4}\\
\frac{1}{4}&c_3 & c_4  & \frac{1}{4}\\
},
\C_{11}= (14)\C_9\equiv
\bmatrix{
\frac{1}{4}& \frac{1}{4} & c_3 & c_4 \\
\frac{1}{4} & c_4 & c_3 &\frac{1}{4} \\
\frac{1}{4} &\frac{1}{4}&c_4&c_3\\
\frac{1}{4} & c_3 & c_4 &\frac{1}{4} \\
} ,$$

 $$\C_{12}= (23)\C_9\equiv
\bmatrix{
\frac{1}{4} & \frac{1}{4}&c_3 & c_4 \\
\frac{1}{4} &c_3 & c_4 & \frac{1}{4} \\
\frac{1}{4} & \frac{1}{4}&c_4 & c_3\\
\frac{1}{4} &c_4 & c_3 & \frac{1}{4} \\
},
\C_{13}= (24)\C_9\equiv
\bmatrix{
\frac{1}{4} & \frac{1}{4} & c_3 & c_4 \\
\frac{1}{4} &c_4 & c_3 & \frac{1}{4} \\
\frac{1}{4} &  c_3 & c_4&\frac{1}{4}\\
\frac{1}{4} & \frac{1}{4} & c_4 & c_3\\
} ,
\C_{14}= (124)\C_9\equiv
\bmatrix{
\frac{1}{4} & \frac{1}{4}&c_3 & c_4 \\
\frac{1}{4} & c_3 & c_4 & \frac{1}{4} \\
\frac{1}{4} & c_4 & c_3& \frac{1}{4}\\
\frac{1}{4} & \frac{1}{4}& c_4 & c_3 \\
}.$$

Consider generic matrices from $\T_3$ type. Then
$$\C_{15}\equiv
\bmatrix{
c_1&c_2 & c_3 & c_4 \\
c_1 & c_2 & c_4&c_3\\
c_1 & c_4 &c_3& c_2 \\
c_2 &c_4 & c_3 & c_1 \\
}$$
and
 $$\C_{16}= (12)\C_{15}\equiv
\bmatrix{
c_1 &c_2 & c_3 & c_4 \\
c_1 &c_2 & c_4 & c_3\\
c_1 &c_3 & c_4 & c_2 \\
c_2 &c_3 & c_4 & c_1 \\
} ,
\C_{17}= (13)\C_{15}\equiv
\bmatrix{
c_1 &c_2 & c_3 & c_4 \\
c_1 & c_4 &c_2 & c_3\\
c_1 & c_4 &c_3 & c_2 \\
 c_4 &c_2 &c_3 & c_1 \\
},
\C_{18}= (14)\C_{15}\equiv
\bmatrix{
c_1  & c_2 & c_3 & c_4 \\
 c_4 & c_1 & c_2 & c_3\\
 c_4 & c_2 & c_3 & c_1 \\
 c_4 & c_1 & c_3 &c_2  \\
},$$

$$\C_{19}= (23)\C_{15}\equiv
\bmatrix{
c_1  & c_2 & c_3 & c_4 \\
 c_1 & c_4 & c_3 & c_2\\
 c_1 & c_2 & c_4 & c_3 \\
 c_2 & c_4 & c_3 &c_1 \\
},
\C_{20}= (34)\C_{15}\equiv
\bmatrix{
c_1  & c_2 & c_3 & c_4 \\
c_1  & c_2 & c_4 & c_3\\
c_2  & c_4 & c_3 & c_1 \\
c_1  & c_4 & c_3 & c_2 \\
},
\C_{21}= (12)(34)\C_{15}\equiv
\bmatrix{
c_1  & c_2 & c_3 & c_4 \\
 c_1 & c_2 & c_4 &c_3\\
  c_2 & c_3 &c_4 & c_1 \\
  c_1 & c_3 &c_4 & c_2 \\
},$$

 $$\C_{22}= (14)(23)\C_{15}\equiv
\bmatrix{
c_1  & c_2 & c_3 & c_4 \\
 c_4 & c_2 & c_3 & c_1 \\
  c_4 & c_1 & c_2 & c_3\\
 c_4 & c_1 & c_3 &c_2 \\
} ,
\C_{23}= (123)\C_{15}\equiv
\bmatrix{
c_1  & c_2 & c_3 & c_4 \\
c_1  & c_3 & c_4 & c_2\\
c_1  & c_2 & c_4 & c_3 \\
c_2  & c_3 & c_4 & c_1\\
} ,
\C_{24}= (132)\C_{15}\equiv
\bmatrix{
c_1  & c_2 & c_3 & c_4 \\
 c_1 & c_4 & c_3 & c_2\\
 c_1&c_4 & c_2 & c_3 \\
 c_4 & c_2 & c_3 &c_1 \\
} ,$$

 $$\C_{25}= (134)\C_{15}\equiv
\bmatrix{
c_1  & c_2 & c_3 & c_4 \\
 c_1 & c_4 & c_2 & c_3\\
 c_4 & c_2 & c_3 & c_1 \\
 c_1 & c_4 & c_3 &c_2 \\
} ,
\C_{26} = (143)\C_{15}\equiv
\bmatrix{
c_1  & c_2 & c_3 & c_4 \\
 c_4 & c_1 & c_2 & c_3\\
 c_4 & c_1 & c_3 &c_2 \\
 c_4 & c_2 & c_3 & c_1 \\
}.$$

Consider the matrix classes from $\T_2$ type. Then,
$$\C_{27}\equiv
\bmatrix{
c_1&c_2 & c_3 & c_4 \\
c_1&c_2 & c_3 & c_4 \\
c_2 &c_1 & c_4 & c_3\\
c_2 &c_1 & c_4 & c_3 \\
},
\C_{29}\equiv
\bmatrix{
c_1&c_2 & c_3 & c_4 \\
c_1&c_2 & c_4 & c_3 \\
c_2 &c_1 & c_3 & c_4\\
c_2 &c_1 & c_4 & c_3 \\
},$$

 $$\C_{31}\equiv
\bmatrix{
c_1&c_2 & c_3 & c_4 \\
c_1 &c_4 & c_2 & c_3\\
c_2&c_1 & c_4 & c_3 \\
c_2 &c_3 & c_1 & c_4 \\
},
\C_{34}\equiv
\bmatrix{
 c_1&c_2 & c_3 & c_4 \\
c_1 &c_3 & c_2 & c_4\\
c_2&c_1 & c_4 & c_3 \\
c_2 &c_4 & c_1 & c_3 \\
}$$
and

 $$\C_{28} = (13)\C_{27}\equiv
\bmatrix{
c_1&c_2 & c_3 & c_4 \\
c_2 &c_1 & c_4 & c_3\\
c_2 &c_1 & c_4 & c_3\\
c_1&c_2 & c_3 & c_4 \\
},
\C_{30}= (13)\C_{29}\equiv
\bmatrix{
c_1&c_2 & c_3 & c_4 \\
c_2 &c_1 & c_4 & c_3\\
c_2 &c_1 & c_3 & c_4 \\
c_1&c_2 & c_4 & c_3 \\
},$$

 $$\C_{32}= (12)\C_{31}\equiv
\bmatrix{
c_1&c_2 & c_3 & c_4 \\
c_1 &c_3 & c_4 & c_2\\
c_3&c_1 & c_2 & c_4 \\
c_3 &c_4 & c_1 & c_2 \\
},
\C_{33} = (13)\C_{31}\equiv
\bmatrix{
c_1&c_2 & c_3 & c_4 \\
c_2 &c_3 & c_1 & c_4\\
c_2&c_1 & c_4 & c_3 \\
c_1 &c_4 & c_2 & c_3 \\
},$$

$$\C_{35}= (12)\C_{34}\equiv
\bmatrix{
c_1&c_2 & c_3 & c_4 \\
c_1 &c_3 & c_2 & c_4\\
c_3&c_1 & c_4 & c_2 \\
c_3 &c_4 & c_1 & c_2 \\
} ,
\C_{36}= (13)\C_{34}\equiv
\bmatrix{
c_1&c_2 & c_3 & c_4 \\
c_2 &c_4 & c_1 & c_3\\
c_2&c_1 & c_4 & c_3 \\
c_1 &c_3 & c_2 & c_4 \\
} ,$$
and
$$\C_{37}= (34)\C_{34}\equiv
 \bmatrix{
c_1 & c_2 & c_3 & c_4 \\
c_1 & c_3 & c_2 & c_4\\
c_2 & c_4 & c_1 & c_3 \\
c_2 & c_1 & c_4 & c_3 \\
} .$$

\end{document}